\newtheorem{theorem}{Theorem}[section]
\theoremstyle{definition}
\newtheorem{remark}[theorem]{Remark}
\newtheorem{lemma}[theorem]{Lemma}
\newtheorem{definition}[theorem]{Definition}
\newtheorem{proposition}[theorem]{Proposition}
\newtheorem{example}[theorem]{Example}
\newcommand{\scom}{simplicial complex}
\newcommand{\K}{\mathcal{K}}
\newcommand{\lines}{\overline{\mathcal{L}}}
\newcommand\norm[1]{\lVert#1\rVert}
\newcommand\difference[2]{#2}
\newcommand{\pdgm}{\text{PD}}
\newcommand{\base}{B}
\newcommand{\bp}{p}
\newcommand{\homdim}{q}
\newcommand{\idx}{\text{idx}}
\newcommand{\R}{\mathbb{R}}
\title[Persistence Diagram Bundles]{Persistence Diagram Bundles: \\A Multidimensional Generalization of Vineyards}
\author{Abigail Hickok}
\date{\today}
\begin{document}

\maketitle

\begin{abstract}
I introduce the concept of a persistence diagram (PD) bundle, which is the space of PDs for a fibered filtration function (a set $\{f_\bp: \K^\bp \to \mathbb{R}\}_{\bp \in \base}$ of filtrations that is parameterized by a topological space $\base$). Special cases include vineyards, the persistent homology transform, and fibered barcodes for multiparameter persistence modules. I prove that if $\base$ is a smooth compact manifold, then for a generic fibered filtration function, $\base$ is stratified such that within each stratum $Y \subseteq \base$, there is a single PD ``template'' (a list of ``birth'' and ``death'' simplices) that can be used to obtain the PD for the filtration $f_p$ for any $\bp \in Y$. If $\base$ is compact, then there are finitely many strata, so the PD bundle for a generic fibered filtration on $\base$ is determined by the persistent homology at finitely many points in $\base$. I also show that not every local section can be extended to a global section (a continuous map $s$ from $\base$ to the total space $E$ of PDs such that $s(\bp) \in \pdgm(f_\bp)$ for all $\bp \in \base$). Consequently, a PD bundle is not necessarily the union of ``vines'' $\gamma: \base \to E$; this is unlike a vineyard. When there is a stratification as described above, I construct a cellular sheaf that stores sufficient data to construct sections and determine whether a given local section can be extended to a global section.
\end{abstract}

\section{Introduction}\label{sec:intro}
\difference{}{In topological data analysis (TDA), our aim is to understand the global shape of a data set. Often, the data set takes the form of a collection of points in $\mathbb{R}^n$, called a \textit{point cloud}, and we hope to analyze the topology of a lower-dimensional space that the points lie on.
TDA has found applications in a variety of fields, such as biology \cite{biology}, neuroscience \cite{cortical}, and chemistry \cite{cyclo}.

We use \emph{persistent homology} (PH), a tool from algebraic topology \cite{eat}. The first step of persistent homology is to construct a filtered complex from our data; a \emph{filtered complex} is a nested sequence
\begin{equation}\label{eq:filtration}
    \K_{r_0} \subseteq \K_{r_1} \subseteq \cdots \subseteq \K_{r_n} \subseteq \cdots
\end{equation}
of simplicial complexes. For example, one of the standard ways to build a filtered complex from point cloud data is to construct the Vietoris--Rips filtered complex.
At filtration-parameter value $r$, the Vietoris--Rips complex $\K_r$ includes a simplex for every subset of points within $r$ of each other. In persistent homology, one studies how the topology of $\K_r$ changes as the filtration parameter-value $r$ increases. As $r$ grows, new homology classes (which represent ``holes'' in the data) are born and old homology classes die. One way of summarizing this information is a \emph{persistence diagram}: a multiset of points in the extended plane $\overline{\mathbb{R}}^2$. If there is a homology class that is born at filtration-parameter value $b$ and dies at filtration-parameter value $d$, the persistence diagram contains the point $(b, d)$.}

Developing new methods for analyzing how the topology of a data set changes as multiple parameters vary is a very active area of research \cite{multi_review}. For example, if a point cloud evolves over time (i.e., it is a dynamic metric space), then one maybe interested in using time as a second parameter, in addition to the filtration parameter $r$. Common examples of time-evolving point clouds include swarming or flocking animals whose positions and/or velocities are represented by points (\cite{CorcoranJones, crocker, kim_memoli}). In such cases, one can obtain a filtered complex $\K_{r_0}^t \subseteq \K_{r_1}^t \subseteq \cdots \subseteq K_{r_n}^t$ at every time $t$ by constructing, e.g., the Vietoris--Rips filtered complex for the point cloud at time $t$. It is also common to use the density of the point cloud as a parameter (\cite{top_regimes, dtm_initial, kde_sublevel}). Many other parameters can also vary in the topological analysis of point clouds or other types of data sets.

One can use a \difference{vineyard}{\emph{vineyard}} \cite{vineyards} to study a 1-parameter family of filtrations $\{\K_{r_0}^t \subseteq \K_{r_1}^t \subseteq \cdots \K_{r_n}^t\}_{t \in \mathbb{R}}$ such as that obtained from a time-varying point cloud. \difference{}{At each $t \in \mathbb{R}$, one can compute the PH of the filtration $\K_{r_0}^t \subseteq \K_{r_1}^t \subseteq \cdots \subseteq \K_{r_n}^t$ and obtain a persistence diagram PD$(t)$. A vineyard is visualized as the continuously-varying ``stack of PDs'' $\{\textnormal{PD}(t)\}_{t \in \mathbb{R}}$. See Figure \ref{fig:vineyard} for an illustration. As $t \in \mathbb{R}$ varies, the points in the PDs trace out curves (``vines'') in $\mathbb{R}^3$. Each vine corresponds to a homology class (i.e., one of the holes in the data), and shows how the persistence of that homology class changes with time (or, more generally, as some other parameter varies).
\begin{figure}
	\centering
	\includegraphics[width = .4\textwidth]{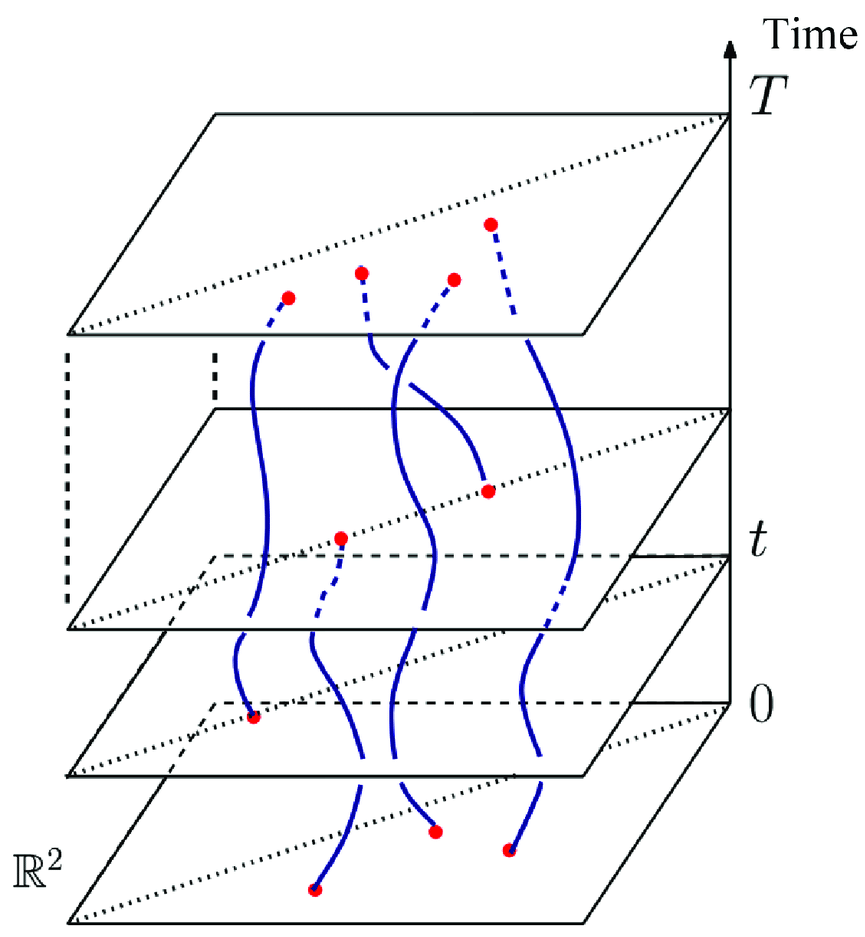}
	\caption{An example of a vineyard. There is a persistence diagram for each time $t$. Each curve is a vine in the vineyard. (This figure is a slightly modified version of a figure that appeared originally in \cite{vineyard_figure}.)}
	\label{fig:vineyard}
\end{figure}
}
However, one cannot use a vineyard for a set of filtrations that is parameterized by a space that is not a subset of $\mathbb{R}$. For example, suppose that we have a time-varying point cloud whose dynamics depend on some system-parameter values $\mu_1, \ldots, \mu_m \in \mathbb{R}$. Many such systems exist. For example, the D'Orsogna model is a multi-agent dynamical system that models attractive and repulsive interactions between particles \cite{dorsogna}.
Each particle is represented by a point in a point cloud.
In certain parameter regimes, there are interesting topological features, such as mills or double mills \cite{topaz2015}. For each time $t \in \mathbb{R}$ and for each $\mu_1, \ldots, \mu_m \in \mathbb{R}$, one can obtain a filtered complex
\begin{equation}\label{eq:param filtration}
    \K_{r_0}^{t, \mu_1, \ldots, \mu_m} \subseteq \K_{r_1}^{t, \mu_1, \ldots, \mu_m} \subseteq \cdots \subseteq \K_{r_n}^{t, \mu_1, \ldots, \mu_m}
\end{equation}
by constructing, e.g., the Vietoris--Rips filtered complex for the point cloud at time $t$ at system-parameter values $\mu_1, \ldots, \mu_m$. A parameterized set of filtered complexes like the one in \eqref{eq:param filtration} cannot be studied using a vineyard for the simple reason that there are too many parameters.

Such a parameterized set of filtered complexes cannot be studied using multiparameter PH \cite{multi}, either. A \emph{multifiltration} is a set $\{\K_{\bm{u}}\}_{\bm{u} \in \mathbb{R}^n}$ of simplicial complexes such that $\K_{\bm{u}} \subseteq \K_{\bm{v}}$ whenever $\bm{u} \leq \bm{v}$. \emph{Multiparameter PH} is the $\mathbb{F}[x_1, \ldots, x_n]$ module obtained by applying homology (over a field $\mathbb{F}$) to a multifiltration. (For more details, see reference \cite{multi}.) The parameterized set of filtered complexes in \eqref{eq:param filtration} is not typically a multifiltration because it is not necessarily the case that $\K_{r_i}^{t, \mu_1, \ldots, \mu_m} \not\subseteq \K_{r_i}^{t', \mu_1', \ldots, \mu_m'}$ for all values of $t$, $t'$, $\{\mu_i\}$, and $\{\mu_i'\}$. Not only is there not necessarily an inclusion $\K_{r_i}^{t, \mu_1, \ldots, \mu_m} \xhookrightarrow{} \K_{r_i}^{t', \mu_1', \ldots, \mu_m'}$, but there is not any given simplicial map $\K_{r_i}^{t, \mu_1, \ldots, \mu_m} \to  \K_{r_i}^{t', \mu_1', \ldots, \mu_m'}$. Therefore, we cannot use multiparameter PH.

In what follows, we will work with a slightly different notion of filtered complex than that of \eqref{eq:filtration}. A \emph{filtration function} is a function $f: \K \to \mathbb{R}$, where $\K$ is a simplicial complex, such that every sublevel set $\K_r := \{\sigma \in \K \mid f(\sigma) \leq r\}$ is a simplicial complex (i.e., $f(\tau) \leq f(\sigma)$ if $\tau$ is a face of $\sigma$). For every $r \leq s$, we have that $\K_r \subseteq \K_s$. A simplex $\sigma \in \K$ appears in the filtration at $r = f(\sigma)$. By setting $\{r_i\} = \textnormal{Im}(f)$, where $r_i < r_{i+1}$, we obtain a nested sequence as in \eqref{eq:filtration}. Conversely, given a nested sequence of simplicial complexes, the associated filtration function is $f(\sigma) = \min\{r_i \mid \sigma \in \K_{r_i}\}$, with $\K = \bigcup_i \K_{r_i}$.

\subsection{Contributions}
I introduce the concept of a \emph{persistence diagram (PD) bundle}, in which PH varies over an arbitrary ``base space'' $\base$. A PD bundle gives a way of studying a \emph{fibered filtration function}, which is a set $\{f_\bp : \K^\bp \to \mathbb{F}\}_{\bp \in \base}$ of functions such that $f_\bp$ is a filtration of a simplicial complex $\K^\bp$.
At each $\bp \in \base$, the sublevel sets of $f_\bp$ form a filtered complex. For example, in \eqref{eq:param filtration}, we have $\base = \mathbb{R}^{n+1}$ and we obtain a fibered filtration function $\{f_{t, \mu_1, \ldots, \mu_m}: \K \to \mathbb{R}\}_{(t, \mu_1, \ldots, \mu_m) \in \mathbb{R}^{m+1}}$ by defining $f_{t, \mu_1, \ldots, \mu_m}$ to be the filtration function associated with the filtered complex in \eqref{eq:param filtration}. The associated PD bundle is the space of persistence diagrams PD$(f_\bp)$ as they vary with $\bp \in \base$ (see Definition \ref{def:pdbundle}). In the special case in which $\base$ is an interval in $\mathbb{R}$, a PD bundle is equivalent to a vineyard.

I prove that for ``generic'' fibered filtration functions (see Section \ref{sec:generic}), the base space $\base$ can be stratified in a way that makes PD bundles tractable to compute and analyze. Theorem \ref{thm:stratified} says that for a ``generic'' fibered filtration function on a smooth compact manifold $\base$, the base space $\base$ is stratified such that within each stratum, there is a single PD ``template'' that can be used to obtain $PD(f_\bp)$ at any point $\bp$ in the stratum. Proposition \ref{prop:polyhedrons_constant} shows that \emph{all} ``piecewise-linear'' PD bundles (see Definition \ref{def:PL_pdbundle}) have such a stratification. The template is a list of (birth, death) simplex pairs, and the diagram $\pdgm(f_\bp)$ is obtained by evaluating $f_\bp$ on each simplex. In particular, when $\base$ is a smooth compact manifold, the number of strata is finite, so the PD bundle is determined by the PH at a finite number of points in the base space.

I show that unlike vineyards, PD bundles 
do not necessarily decompose into a union of ``vines''. More precisely, there may not exist continuous maps $\gamma_1, \ldots, \gamma_m : \base \to E$ such that
\begin{equation}\label{eq:vine_decomp}
    \pdgm(f_\bp) = \bigcup_{i=1}^m \gamma_i(\bp)
\end{equation}
for all $\bp \in \base$. This is a consequence of \Cref{prop:monodromy}, in which it is shown that nontrivial global sections are not guaranteed to exist. That is, given a point $z_0 \in PD(f_{\bp_0})$ for some $\bp_0 \in \base$, it may not be possible to extend $\bp_0 \mapsto z_0$ to a continuous map $s : \base \to E:= \{(\bp, z) \mid z \in PD(f_\bp)\}$ such that $s(\bp) \in \pdgm(f_\bp)$ for all $\bp$. This behavior is a feature that gives PD bundles a richer mathematical structure than vineyards.

For any fibered filtration with a stratification as described above (see Theorem \ref{thm:stratified} and Proposition \ref{prop:polyhedrons_constant}), I construct a ``compatible cellular sheaf'' (see Section \ref{sec:cellularsheaf}) that stores the data in the PD bundle. Rather than analyzing the entire PD bundle, which consists of continuously varying PDs over the base space $\base$, we can analyze the cellular sheaf, which is discrete. For example, in Proposition \ref{prop:discrete_secs}, I prove that an extension of $\bp_0 \mapsto z_0$ to a global section exists if a certain associated global section of the cellular sheaf exists. A compatible cellular sheaf stores sufficient data to reconstruct the associated PD bundle and analyze its sections.

Though not the focus of this paper, I also give a simple example of vineyard instability in Appendix \ref{sec:vineyard_unstable}. It is often quoted in the research literature that ``vineyards are unstable''; however, this ``well-known fact'' has been shared only in private correspondence and, to the best of my knowledge, has never been published. The example of vineyard instability is furnished from an example in Proposition \ref{prop:monodromy}.

\subsection{Related work}
PD bundles are a generalization of vineyards, which were introduced in \cite{vineyards}. Two other important special cases of PD bundles are the fibered barcode of a multiparameter persistence module \cite{fibered} and the persistent homology transform ($\base = S^n$) from shape analysis \cite{elevation, pht}. I discuss the special case of fibered barcodes in detail in Section \ref{sec:fibered}; the base space $\base$ is a subset of the space of lines in $\mathbb{R}^n$. The persistent homology transform (PHT) is defined for a constructible set $M \subseteq \mathbb{R}^{n+1}$. For any unit vector $v \in S^n$, one defines the filtration $M_r^v = \{x \in M \mid x \cdot v \leq r\}$ (i.e., the sublevel filtration of the height function with respect to the direction $v$). PHT is the map that sends $v \in S^n$ to the persistence diagram for the filtration $\{M_r^v\}_{r \in \mathbb{R}}$. The significance of PHT is that it is a sufficient statistic for shapes in $\mathbb{R}^2$ and $\mathbb{R}^3$ \cite{pht}. Applications of PHT are numerous and include protein docking \cite{protein_docking}, barley-seed shape analysis \cite{barley}, and heel-bone analysis in primates \cite{pht}.

For PHT, Curry et al. \cite{pht_finite} proved that the base space $S^n$ is stratified such that the PHT of a shape $M$ is determined by the PH of $\{M_r^v\}_{r \in \mathbb{R}}$ for finitely many directions $v \in S^n$ (one direction $v$ per stratum). This is related to the stratification given by Theorem \ref{thm:stratified}, in which I show that a ``generic'' PD bundle whose base space $\base$ is a compact smooth manifold (such as $S^n$) is similarly stratified and thus determined by finitely many points in $\base$ (one $\bp \in \base$ per stratum). The primary difference between the stratifications in \cite{pht_finite} and Theorem \ref{thm:stratified} is that in \cite{pht_finite}, each stratum is a subset in which the order of the vertices of a triangulated shape $M$ (as ordered by the height function) is constant, whereas in Theorem \ref{thm:stratified}, each stratum is a subset in which the order of the simplices (as ordered by the filtration function) is constant. The stratification result of the present paper (\Cref{thm:stratified}) applies to general PD bundles, while \cite{pht_finite} applies only to PHT.

The stratification that we study in the present \difference{chapter}{paper} is used in \cite{pd_bundle_alg} to develop an algorithm for computing ``piecewise-linear'' PD bundles (see Definition \ref{def:PL_pdbundle}). The algorithm relies on the fact that for any piecewise-linear PD bundle on a compact triangulated base space $\base$, there are a finite number of strata, so the PD bundle is determined by the PH at a finite number of points in $\base$.

The existence (or nonexistence) of nontrivial global sections in PD bundles is related to the study of ``monodromy'' in fibered barcodes of multiparameter persistence modules \cite{monodromy}. Cerri et al. \cite{monodromy} constructed an example in which there is a path through the fibered barcode that loops around a ``singularity'' (a PD in the fibered barcode for which there is a point in the PD with multiplicity greater than \difference{$1$}{one}) and finishes in a different place than where it starts.

\subsection{Organization}
\difference{This chapter}{This paper} proceeds as follows.\difference{}{ I review background on persistent homology in Section \ref{sec:background}.} In Section \ref{sec:def}, I give the definition of a PD bundle, with some examples, and I compare PD bundles to multiparameter PH. In Section \ref{sec:partition}, I show how to stratify the base space $\base$ into strata in which the (birth, death) simplex pairs are constant (see Theorem \ref{thm:stratified} and Proposition \ref{prop:polyhedrons_constant}). I discuss sections of PD bundles and the existence of monodromy in Section \ref{sec:monodromy}. I construct a compatible cellular sheaf in Section \ref{sec:cellularsheaf}.
I conclude and discuss possible directions for future research in Section \ref{sec:conclusion}. In Appendix \ref{sec:vineyard_unstable}, I use the example of monodromy from Section \ref{sec:monodromy} to construct an example of vineyard instability. In Appendix \ref{sec:bundle_details}, I provide technical details that are needed to prove Theorem \ref{thm:stratified}.

%%%%%%%%%%%%%%%%%%%%%%%%%%%%%%%%%%%%%%%%%%%%%%%%

\section{Background}\label{sec:background}
We begin by reviewing persistent homology (PH) and cellular sheaves. For a more thorough treatment of PH, see \cite{edel_book, roadmap}, and for more on cellular sheaves, see \cite{spectral, curry}.

\subsection{Filtrations}
Consider a simplicial complex $\K$. A \emph{filtration function} on $\K$ is a real-valued function $f: \K \to \mathbb{R}$ such that if $\tau \in \K$ is a face of $\sigma \in K$, then $f(\tau) \leq f(\sigma)$. The \emph{filtration value} of a simplex $\sigma \in \K$ is $f(\sigma)$. The $r$-sublevel sets $\K_r := \{\sigma \in \K \mid f(\sigma) \leq r\}$
form a \emph{filtered complex}. The condition that $f(\tau) \leq f(\sigma)$ if $\tau \subseteq \sigma$ guarantees that $\K_r$ is a \scom\ for all $r$. For all $s \leq r$, we have $\K_s \subseteq \K_r$. The parameter $r$ is the \emph{filtration parameter}. For an example, see Figure \ref{fig:fsc_example}.

\begin{figure}
    \centering
    \subfloat[$\K_0$]{\includegraphics[width = .17\linewidth]{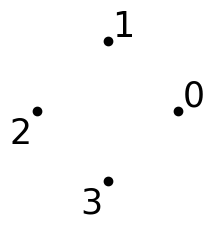}}
    \hspace{0.025\linewidth}
    \subfloat[$\K_1$]{\includegraphics[width = .17\linewidth]{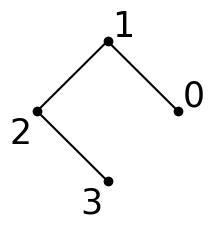}}
    \hspace{0.025\linewidth}
    \subfloat[$\K_2$]{\includegraphics[width = .17\linewidth]{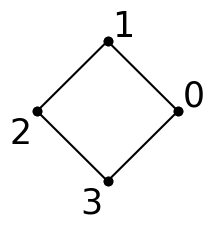}}
    \hspace{0.025\linewidth}
    \subfloat[$\K_3$]{\includegraphics[width = .17\linewidth]{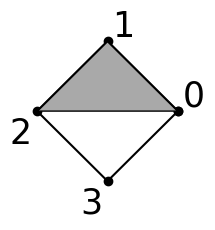}}
    \hspace{0.025\linewidth}
    \subfloat[$\K_4$]{\includegraphics[width = .17\linewidth]{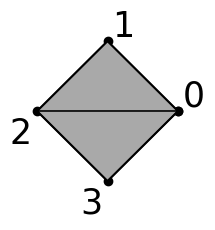}}
    \caption{An example of a filtration. The simplicial complex $\K_i$ has the associated filtration-parameter value $i$. (This figure appeared originally in \cite{covid}.)}
    \label{fig:fsc_example}
\end{figure}

For example, suppose that $X = \{x_i\}_{i=1}^M$ is a point cloud. Let $\K$ be the simplicial complex that has a simplex $\sigma$ with vertices $\{x_j\}_{j \in J}$ for all $J \subseteq \{1, \ldots, M\}$. The \emph{Vietoris--Rips filtration function} is $f(\sigma) = \frac{1}{2} \max_{j, k \in J} \{ \norm{x_j - x_k}\}$, where $\{x_j\}_{j \in J}$ are the vertices of $\sigma$.

%%%%

\subsection{Persistent homology}\label{sec:ph}
Let $f: \K \to \mathbb{R}$ be a filtration function on a finite simplicial complex $\K$, and let $\{K_r\}_{r \in \mathbb{R}}$ be the associated filtered complex. Let $r_1 < \cdots < r_N$ be the filtration values of the simplices of $\K$. These are the critical points at which simplices are added to the filtration. For all $s \in [r_i, r_{i+1})$, we have $\K_s = \K_{r_i}$.

For all $i \leq j$, the inclusion $\iota^{i, j}: \K_{r_i} \hookrightarrow \K_{r_j}$ induces a map $\iota^{i, j}_*: H_*(\K_{r_i}, \mathbb{F}) \to H_*(\K_{r_j}, \mathbb{F})$ on homology, where $\mathbb{F}$ is a field that we set to $\mathbb{Z}/2\mathbb{Z}$ for the rest of this paper. The \emph{$\homdim$th-persistent homology} (PH) is the pair 
\begin{equation*}
    \Big( \{H_\homdim(\K_{r_i}, \mathbb{F})\}_{1 \leq i \leq N}\,, \{\iota_*^{i, j}\}_{1 \leq i \leq j \leq N} \Big)\,.
\end{equation*}
The Fundamental Theorem of Persistent Homology yields compatible choices of bases for the vector spaces $H_\homdim(\K_{r_i}, \mathbb{F})$, which we use below in our definition of a persistence diagram.

The \emph{$\homdim$th persistence diagram} $PD_\homdim(f)$ is a multiset of points in the extended plane $\overline{\mathbb{R}}^2$ that summarizes the $\homdim$th-persistent homology. The PD contains the diagonal as well as a point for every generator. We say that a generator $\gamma \in H_\homdim(\K_{r_i}, \mathbb{F})$ is \emph{born} at $r_i$ if it is not in the image of $\iota^{i, i-1}_*$. The homology class $\gamma$ subsequently \emph{dies} at $r_j > r_i$ if $\iota_*^{i, j}(\gamma) = 0$ and $\iota_*^{i, j-1}(\gamma) \neq 0$. If $\iota_*^{i, j}(\gamma) \neq 0$ for all $j > i$, then $\gamma$ never dies. For every generator, the PD contains the point $(r_i, r_j)$ if the generator is born at $r_i$ and dies at $r_j$, or else contains the point $(r_i, \infty)$ if the generator is born at $r_i$ and never dies.

\subsection{Birth and death simplex pairs} Computing persistent homology can be reduced to computing the set of ``birth'' and ``death'' simplices for the generating homology classes. Informally, a \emph{birth simplex} $\sigma_b$ is a $\homdim$-simplex that creates a new $\homdim$-dimensional homology class when it is added to the filtration and a \emph{death simplex} is a $(\homdim+1)$-simplex that destroys a $\homdim$-dimensional homology class when it is added to the filtration. For example, in Figure \ref{fig:fsc_example}, the 1D PH has one generator. Its birth simplex is the $1$-simplex $(0, 3)$ and its death simplex is the $2$-simplex $(0, 2, 3)$. For every pair $(\sigma_b, \sigma_d)$ of (birth, death) simplices, the persistence diagram contains the point $(f(\sigma_b), f(\sigma_d))$. For every unpaired birth simplex $\sigma_b$, the persistence diagram contains the point $(f(\sigma_b), \infty)$.

In \cite{ph_alg}, Edelsbrunner and Harer presented an algorithm for computing the (birth, death) simplex pairs of a filtration $f: \K \to \mathbb{R}$. Let $\sigma_1, \ldots, \sigma_N$ be the simplices of $\K$, indexed such that $i < j$ if $\sigma_i$ is a proper face of $\sigma_j$. 

\begin{definition}\label{def:spx_ordering}
    The \emph{simplex order} induced by $f$ is the strict partial order $\prec_f$ on $\K$ such that $\sigma_i \prec_f \sigma_j $ if and only if $f(\sigma_i) < f(\sigma_j)$.
\end{definition}
\noindent If the simplex orders $\prec_{f_1}, \prec_{f_2}$ induced by filtrations $f_1, f_2$ (respectively) are the same, then $f_1(\sigma_i) < f_1(\sigma_j)$ if and only if $f_2(\sigma_i) < f_2(\sigma_j)$ and $f_1(\sigma_i) = f_1(\sigma_j)$ if and only if $f_2(\sigma_i) = f_2(\sigma_j)$.

The algorithm of \cite{ph_alg} requires a \emph{compatible simplex indexing}.
\begin{definition}\label{def:spx_indexing}
   A \emph{compatible simplex indexing} is a function $\idx : \K \to \{1, \ldots, N\}$ such that $\idx(\sigma_i) < \idx(\sigma_j)$ if $\sigma_i \prec_f \sigma_j$ or $\sigma_i$ is a proper face of $\sigma_j$. Because a compatible simplex indexing may not be unique, we fix the \emph{simplex indexing induced by $f$} to be the unique function $\idx_f: \K \to \{1, \ldots, N \}$ such that $\idx_f(\sigma_i) < \idx_f(\sigma_j)$ if either $\sigma_i \prec_f \sigma_j$ or if $f(\sigma_i) = f(\sigma_j)$ and $i < j$.
\end{definition}
\noindent The function $\idx_f$ is a compatible simplex indexing because if $\sigma_i$ is a proper face of $\sigma_j$, then $i < j$ and $f(\sigma_i) \leq f(\sigma_j)$. The sequence $\idx_f^{-1}(1), \ldots, \idx_f^{-1}(N)$ of simplices is ordered by the value of $f$ on each simplex, with ties broken by the order of the simplices in the sequence $\sigma_1, \ldots, \sigma_N$. The indexing $\idx_f$ is defined such that if we define $\K'_j := \{\sigma \in \K \mid \idx_f(\sigma) \leq j\}$, then
\begin{equation*}
    \K_1' \subseteq \K_2' \subseteq \cdots \subseteq \K'_N
\end{equation*}
is a nested sequence of simplicial complexes and if $r_i = f(\sigma_{j_1}) = \cdots = f(\sigma_{j_k})$, where $j_1 < \cdots < j_k$ and $\{r_i\} = \text{Im}(f)$, with $r_i < r_{i+1}$, then
\begin{equation*}
    \K_{r_i} = \K'_{j_1} \subset \K'_{j_2} \subset \cdots \subset \K_{j_k}' \subset \K_{r_{i+1}}\,.
\end{equation*}
In other words, $\{\K'_j\}$ is a refinement of $\{\K_{r_i}\}$.

The following lemma is a straightforward corollary of the work in \cite{edel_book}, and we will rely on it repeatedly in the present paper.

\begin{lemma}[\cite{edel_book}]\label{lem:only_order}
If $f_0, f_1: \K \to \mathbb{R}$ are two filtration functions such that $\prec_{f_0}$ is the same as $\prec_{f_1}$, then $\idx_{f_0} = \idx_{f_1}$ and $f_1$ and $f_2$ both induce the same set of (birth, death) simplex pairs.
\end{lemma}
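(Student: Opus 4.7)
The plan is to prove the two conclusions in sequence, since the second follows essentially for free once the first is established.

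First I would verify that $\idx_{f_0} = \idx_{f_1}$ directly from Definition \ref{def:spx_indexing}. The indexing $\idx_f$ is characterized entirely by (i) the strict partial order $\prec_f$ and (ii) the fixed background enumeration $\sigma_1,\ldots,\sigma_N$ of $\K$ used to break ties when $f(\sigma_i)=f(\sigma_j)$. The background enumeration does not depend on $f$. Under the hypothesis $\prec_{f_0}\,=\,\prec_{f_1}$, the statements ``$f_0(\sigma_i) < f_0(\sigma_j)$'' and ``$f_1(\sigma_i) < f_1(\sigma_j)$'' are equivalent, and likewise for equality. Hence the total order used to define $\idx_{f_0}$ and the total order used to define $\idx_{f_1}$ coincide on every pair of simplices, which forces $\idx_{f_0}(\sigma) = \idx_{f_1}(\sigma)$ for all $\sigma \in \K$. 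This first step is almost a tautology, but it is worth writing out to make the role of the background enumeration explicit.

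Next I would invoke the persistence algorithm of Edelsbrunner--Harer \cite{ph_alg} to conclude that the set of (birth, death) simplex pairs depends only on $\idx_f$. The algorithm takes as input the boundary matrix of $\K$ with columns ordered by any compatible simplex indexing, performs a column reduction, and reads off pairs $(\sigma_b,\sigma_d)$ from the positions of the lowest nonzero entries in the reduced columns. The output pairing is a function of the ordered boundary matrix alone; it makes no further reference to the numerical values of $f$. Since $\idx_{f_0} = \idx_{f_1}$ by the first step, the algorithm is fed identical input in the two cases and therefore returns identical (birth, death) simplex pairs.

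The main, and essentially only, technical obstacle is to make precise the claim ``the pairing depends only on the ordering.'' The cleanest way is to cite the correctness statement of the standard matrix reduction algorithm as presented in \cite{edel_book}, which the lemma itself attributes to that reference; the authors there prove that the pairing produced by reduction is independent of the particular compatible indexing chosen, and in particular depends on $f$ only through $\idx_f$. With that citation in place, the lemma reduces to the elementary observation of the first paragraph.
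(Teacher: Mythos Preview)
Your proposal is correct, and there is little to compare against: the paper does not actually prove this lemma. It states it with attribution to \cite{edel_book}, prefaced by the remark that it is ``a straightforward corollary of the work in \cite{edel_book},'' and then uses it as a black box throughout. Your two-step argument --- first observing that $\idx_f$ is determined by $\prec_f$ together with the fixed background enumeration, then noting that the matrix-reduction algorithm of \cite{ph_alg} computes the simplex pairs from the ordered boundary matrix alone --- is exactly the kind of unpacking the paper is gesturing at, and it is sound.

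One small clarification: in your final paragraph you conflate two slightly different facts. The statement that ``the pairing produced by reduction is independent of the particular compatible indexing chosen'' is a separate (and stronger) result about different compatible indexings yielding equivalent persistence diagrams. What you actually need here is the more elementary observation that the reduction algorithm is a deterministic procedure on the ordered boundary matrix, so identical indexings produce identical outputs. Your argument already contains this; just be careful not to invoke more than necessary.
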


\subsection{Updating persistent homology when the simplex indexing is updated}\label{sec:updates}
One of the main contributions of \cite{vineyards}, in which vineyards were introduced, is an algorithm for updating the (birth, death) simplex pairs when the simplex indexing changes. We review the relevant details in this subsection.

Suppose that $\idx_{f_0}, \idx_{f_1} : \K \to \{1, \ldots, N\}$ are the simplex indexings that are induced by filtrations $f_0, f_1 : \K \to \mathbb{R}$ (respectively), and suppose that $\idx_{f_0}$ and $\idx_{f_1}$ differ only by a transposition of a pair $(\sigma, \tau)$ of consecutive simplices. That is, $\idx_{f_0}(\tau) = \idx_{f_0}(\sigma) + 1$, $\idx_{f_1}(\tau) = \idx_{f_0}(\sigma)$, and $\idx_{f_1}(\sigma) = \idx_{f_0}(\tau)$. Let $S_{\idx_{f_0}}$ and $S_{\idx_{f_1}}$ be the sets of (birth, death) simplex pairs for $f_0$ and $f_1$, respectively.\footnote{Recall that, by Lemma~\ref{lem:only_order}, the pairs depend only on the simplex orders $\idx_{f_0}$, $\idx_{f_1}$, which is why we label the sets by their associated simplex indexings.} The update rule of \cite{vineyards} gives us bijection $\phi^{\idx_{f_0},\, \idx_{f_1}}: S_{\idx_{f_0}} \to S_{\idx_{f_1}}$.

We review the key properties of the bijection $\phi^{\idx_{f_0}, \,\idx_{f_1}}$. We write
\begin{equation*}
    \phi^{\idx_{f_0},\, \idx_{f_1}} = (\phi_b^{\idx_{f_0},\, \idx_{f_1}}, \phi_d^{\idx_{f_0},\, \idx_{f_1}})\,,
\end{equation*}
where $\phi^{\idx_{f_0},\, \idx_{f_1}}_b: S_{\idx_{f_0}} \to \K$ maps a simplex pair $(\sigma_b, \sigma_d) \in S_{\idx_{f_0}}$ to the birth simplex of $\phi^{\idx_{f_0},\, \idx_{f_1}}((\sigma_b, \sigma_d))$ and $\phi^{\idx_{f_0},\, \idx_{f_1}}_d: S_{\idx_{f_0}} \to \K$ maps $(\sigma_b, \sigma_d) \in S_{\idx_{f_0}}$ to the death simplex of $\phi^{\idx_{f_0},\, \idx_{f_1}}((\sigma_b, \sigma_d))$. If $(\sigma_b, \sigma_d) \in S_{\idx_{f_0}}$ is a pair such that $\sigma_b, \sigma_d \not\in \{\sigma, \tau\}$, then $\phi^{\idx_{f_0},\, \idx_{f_1}}((\sigma_b, \sigma_d)) = (\sigma_b, \sigma_d)$. If $(\sigma_b^1, \sigma_d^1) \in S_{\idx_{f_0}}$ is the pair that contains $\sigma$, then let $\lambda \in \{b, d\}$ be the index such that $\sigma^1_{\lambda} = \sigma$. Similarly, if $(\sigma_b^2, \sigma_d^2) \in S_{\idx_{f_0}}$ is the pair that contains $\tau$, then let $\mu\in \{b, d\}$ be the index such that $\sigma^1_{\mu} = \tau$. The key fact about the update rule of \cite{vineyards} is that $\phi^{\idx_{f_0} ,\, \idx_{f_1}}$ is defined such that either 
\begin{align*}
    \phi^{\idx_{f_0},\, \idx_{f_1}}((\sigma^1_b, \sigma^2_d)) &= (\sigma^1_b, \sigma^1_d)\,, \\
    \phi^{\idx_{f_0},\, \idx_{f_1}}((\sigma^2_b, \sigma^2_d)) &= (\sigma^2_b, \sigma^2_d)\,,
\end{align*}
or
\begin{align*}
    \phi^{\idx_{f_0},\, \idx_{f_1}}_{\lambda}((\sigma^1_b, \sigma^1_d)) &= \tau\,, \qquad \phi^{\idx_{f_0}, \idx_{f_1}}_{\lambda^c}((\sigma^1_b, \sigma^1_d)) = \sigma^1_{\lambda^c}\,, \\
    \phi^{\idx_{f_0},\, \idx_{f_1}}_{\mu}((\sigma^2_b, \sigma^2_d)) &= \sigma\,, \qquad \phi^{\idx_{f_0},\, \idx_{f_1}}_{\mu^c}((\sigma^2_b, \sigma^2_d)) = \sigma^1_{\mu^c}\,,
\end{align*}
where 
\begin{equation*}
    \lambda^c := \begin{cases} b\,, & \lambda = d\\ d \,, & \lambda = b\end{cases} \qquad \mu^c := \begin{cases} b\,, & \mu = d\\ d \,, & \mu = b\,.\end{cases}
\end{equation*}
In other words, either $\phi^{\idx_{f_0},\, \idx_{f_1}}$ is the identity map or $\phi^{\idx_{f_0},\, \idx_{f_1}}$ swaps $\sigma$ and $\tau$ in the pairs that contain them. The particular case depends on the order of $f_0(\sigma_b^1), f_0(\sigma_d^1), f_0(\sigma_b^2), f_0(\sigma_d^2)$ (see \cite{vineyards} for details; they are not relevant to the present \difference{thesis}{paper}).

More generally, suppose that $\idx_{f_0}$, $\idx_{f_1}$ are the simplex indexings induced by any two filtrations $f_0, f_1$, where $\idx_{f_0}$ and $\idx_{f_1}$ are no longer required to differ only by the transposition of two consecutive simplices. Let $S_{\idx_{f_0}}$ and $S_{\idx_{f_1}}$ be the sets of (birth, death) simplex pairs for $f_0$ and $f_1$, respectively. The update rule of Cohen-Steiner et al. \cite{vineyards} defines a bijection $\phi^{\idx_{f_0},\, \idx_{f_1}} : S_{\idx_{f_0}} \to S_{\idx_{f_1}}$ as follows. Every permutation can be decomposed into a sequence of transpositions that transpose consecutive elements, so there is a sequence $\zeta_0, \ldots, \zeta_m$ of simplex indexings such that $\zeta_0 = \idx_{f_0}$, $\zeta_m = \idx_{f_1}$, and $\zeta_i$, $\zeta_{i+1}$ differ only by the transposition of two consecutive simplices. Cohen-Steiner et al. \cite{vineyards} defined
\begin{equation}\label{eq:bijection_def}
    \phi^{\idx_{f_0},\, \idx_{f_1}} := \phi^{\zeta_{m-1},\, \zeta_m} \circ \cdots \circ \phi^{\zeta_0,\, \zeta_1}\,.
\end{equation}

\begin{remark}\label{rmk:nonunique}
    If $\idx_{f_0}$, $\idx_{f_1}$ do not differ by only the transposition of two consecutive simplices, then the sequence $\zeta_0, \ldots, \zeta_m$ is not unique. Unfortunately, the definition of $\phi^{\idx_{f_0},\, \idx_{f_1}}$ does depend on the sequence $\zeta_0, \ldots, \zeta_m$ in its definition. This is implicitly shown in \Cref{prop:monodromy}.
    \end{remark}

\subsection{Cellular Sheaves}

\difference{References on cellular sheaves include \cite{spectral, curry}.}{} A \emph{cell complex} is a topological space $Y$ with a partition into a set $\{Y_{\alpha}\}_{\alpha \in P_Y}$ of subspaces (the \emph{cells} of the cell complex) that satisfy the following conditions:
\begin{enumerate}
    \item Every cell $Y_{\alpha}$ is homeomorphic to $\mathbb{R}^{k_{\alpha}}$ for some $k_{\alpha} \geq 0$. The cell $Y_{\alpha}$ is a \emph{$k_{\alpha}$-cell}.
    \item For every cell $Y_{\alpha}$, there is a homeomorphism $\phi_{\alpha}: B^{k_{\alpha}} \to \overline{X_{\alpha}}$, where $B^{k_{\alpha}}$ is the closed $k_{\alpha}$-dimensional ball, such that $\phi_{\alpha}(\text{int}(B^{k_{\alpha}})) = X_{\alpha}$.
    \item {\bf Axiom of the Frontier:} If the intersection $\overline{Y_{\beta}} \cap Y_{\alpha}$ is nonempty, then $Y_{\alpha} \subseteq \overline{Y_{\beta}}$. We say that $Y_{\alpha}$ is a \emph{face} of $Y_{\beta}$.
    \item {\bf Locally finite:} Every $x \in X$ has an open neighborhood $U$ such that $U$ intersects finitely many cells.
\end{enumerate}
For example, a polyhedron is a cell complex whose $k$-cells are the $k$-dimensional faces of the polyhedron. A graph is a another example of a cell complex; the $0$-cells are the vertices and the $1$-cells are the edges.

We will first review the most general definition of cellular sheaves, which uses category theory, and then we will specialize to the case of interest for \difference{this dissertation}{the present paper}, which does not require category theory. Let $Y$ be a cell complex with cells $\{Y_{\alpha}\}_{\alpha \in P_Y}$, and let $\mathcal{D}$ be a category. The set $P_Y$ is a poset with the relation $\alpha \leq \beta$ if $Y_{\alpha} \subseteq \overline{Y_{\beta}}$.

A \emph{$\mathcal{D}$-valued cellular sheaf on $Y$} consists of 
\begin{enumerate}
    \item An assignment of an object $\mathcal{F}(Y_{\alpha}) \in \mathcal{D}$ (the \emph{stalk} of $\mathcal{F}$ at $Y_{\alpha}$) for every cell $Y_{\alpha}$ in $Y$, and
    \item A morphism $\mathcal{F}_{\alpha \leq \beta}: F(Y_{\alpha}) \to F(Y_{\beta})$ (a \emph{restriction map}) whenever $Y_{\alpha}$ is a face of $Y_{\beta}$. The morphisms must satisfy the \emph{composition condition}:
    \begin{equation}\label{eq:composition}
        \mathcal{F}_{\beta \leq \gamma} \circ \mathcal{F}_{\alpha \leq \beta} = \mathcal{F}_{\alpha \leq \gamma}
    \end{equation}
    whenever $\alpha \leq \beta \leq \gamma$.
\end{enumerate}
Equivalently, a $\mathcal{D}$-valued cellular sheaf on $Y$ is a functor $\mathcal{F}: P_Y \to \mathcal{D}$, where $P_Y$ is considered as a category.

A \emph{global section} of a cellular sheaf $\mathcal{F}$ is a function
\begin{equation*}
s: \{Y_{\alpha}\}_{\alpha \in P_Y} \to \bigcup_{\alpha} \mathcal{F}(Y_{\alpha})
\end{equation*}
such that
\begin{enumerate}
    \item $s(Y_{\alpha}) \in \mathcal{F}(Y_{\alpha})$ (i.e., $s(Y_{\alpha})$ is a choice of element in the stalk at $Y_{\alpha}$), and
    \item if $\alpha \leq \beta$, then $s(Y_{\beta}) = \mathcal{F}_{\alpha \leq \beta}( s(Y_{\alpha}))$\,.
\end{enumerate}

\difference{In Chapter \ref{ch:pdbundle}}{In what follows}, we will consider a {\bf Set}-valued cellular sheaf. The objects of the category {\bf Set} are sets, and the morphisms between sets $A$ and $B$ are the functions from $A$ to $B$. A {\bf Set}-valued cellular sheaf on a cell complex $Y$ consists of
\begin{enumerate}
\item A set $\mathcal{F}(Y_{\alpha})$ for every cell $Y_{\alpha}$ in $Y$, and
\item A function $\mathcal{F}_{\alpha \leq \beta}: \mathcal{F}(Y_{\alpha}) \to \mathcal{F}(Y_{\beta})$ whenever $Y_{\alpha}$ is a face of $Y_{\beta}$. The functions must satisfy the condition:
\begin{equation*}
        \mathcal{F}_{\beta \leq \gamma} \circ \mathcal{F}_{\alpha \leq \beta} = \mathcal{F}_{\alpha \leq \gamma}
\end{equation*}
whenever $\alpha \leq \beta \leq \gamma$.
\end{enumerate}

%%%%

\section{Definition of a Persistence Diagram Bundle}\label{sec:def}

A vineyard is a $1$-parameter set of persistence diagrams that is computed from a $1$-parameter set of filtration functions on a simplicial complex $\K$. We generalize a vineyard to a ``persistence diagram bundle'' as follows.

\begin{definition}[Fibered filtration function]\label{def:fff}
    A \emph{fibered filtration function} is a set $\{f_\bp : \K^\bp \to \mathbb{R}\}_{\bp \in \base}$, where $\base$ is a topological space, $\{\K^\bp\}_{\bp \in \base}$ is a set of simplicial complexes parameterized by $\base$, and $f_\bp$ is a filtration function on $\K^\bp$.
\end{definition}
\noindent When $\K^\bp \equiv \K$ for all $\bp \in \base$, we define $f: \K \times \base \to \mathbb{R}$ to be the function $f(\sigma, \bp) := f_\bp(\sigma)$. In a slight abuse of notation, we refer to $f : \K \times \base \to \mathbb{R}$, rather than to $\{f_{\bp}: \K \to \base\}_{\bp \in \base}$, as the fibered filtration function. For all $\bp \in \base$, the function $f(\cdot, \bp):\K \to \mathbb{R}$ is a filtration of $\K$. For several examples with $\K^\bp \equiv \K$, see Section \ref{sec:examples}.

\begin{definition}[Persistence diagram bundle]\label{def:pdbundle}
Let $\{f_\bp: \K^\bp \to \mathbb{R}\}_{\bp \in \base}$ be a fibered filtration function. The \emph{base} of the bundle is $\base$. The \emph{$\homdim$th total space} of the bundle is $E := \{(\bp, z)\} \mid \bp \in \base, z \in PD_\homdim(f_\bp)\}$, with the subspace topology inherited from the inclusion $E \hookrightarrow \base \times \overline{\mathbb{R}}^2$.\footnote{Technically, $E$ is a multiset because persistence diagrams are multisets. However, when considering $E$ as a topological space (which we do in Section \ref{sec:monodromy} to study continuous paths in $E$ and sections of the PD bundle), we consider $E$ as a set.}
The \emph{$\homdim$th persistence diagram bundle} is the triple $(E, \base, \pi)$, where $\pi: E \to \base$ is the projection $(\bp, z) \mapsto \bp$.
\end{definition}

For example, when $\base$ is an interval in $\mathbb{R}$ and $\K^\bp \equiv \K$, Definition \ref{def:pdbundle} reduces to that of a vineyard: a $1$-parameter set of PDs for a $1$-parameter set of filtrations of $\K$. As discussed in Section \ref{sec:intro}, PHT is a special case with $\base = S^d$. The fibered barcode of a multiparameter persistence module is another special case; we will discuss it in Section \ref{sec:fibered}.

\begin{remark}
    In Definition \ref{def:pdbundle}, we are suggestively using the language of fiber bundles. However, it is important to note that a PD bundle is not guaranteed to be a true fiber bundle. The fibers need not be homeomorphic to each other for all $\bp \in \base$. At ``singularities'' (points $\bp_* \in \base$ at which $\pdgm(f_{\bp_*})$ has an off-diagonal point with multiplicity), points in $\pdgm(f_{\bp})$ for nearby $\bp$ may merge into each other, changing the homotopy type of the fiber. However, if $f: \K \times \base \to \mathbb{R}$ is continuous and $\bp \in \base$ is not a singularity, then there is a neighborhood $U \subseteq \base$ and a homeomorphism $\phi : \pi^{-1}(U) \to U \times \pdgm(f_{\bp_*})$ that preserves fibers (i.e., a local trivialization).
\end{remark}

As a special case of fibered filtration functions, we define \emph{piecewise-linear fibered filtration functions}, which are simpler to analyze.
\begin{definition}[Piecewise-linear fibered filtration function]\label{def:PL_pdbundle}
    Let $\{f_\bp : \K^\bp \to \mathbb{R}\}_{\bp \in \base}$ be a fibered filtration function such that $\K^\bp \equiv \K$. We define $f(\sigma, \bp) := f_\bp(\sigma)$ for all $\sigma \in \K$ and $\bp \in \base$. If $\base$ is a simplicial complex and $f(\sigma, \cdot)$ is linear on each simplex of $\base$ for all simplices $\sigma \in \K$, then $f$ is a \emph{piecewise-linear fibered filtration function.} The resulting PD bundle is a \emph{piecewise-linear PD bundle}.
\end{definition}
\noindent For instance, the fibered filtration function in Example \ref{ex:image}, below, is piecewise linear.

\subsection{Examples}\label{sec:examples}

The following are concrete examples of PD bundles. We begin with the example that motivated PD bundles in Section \ref{sec:intro}.

\begin{example}\label{ex:main}
Suppose that $X(t, \bm{\mu}) = \{x_1(t, \bm{\mu}), \ldots, x_k(t, \bm{\mu})\}$ is a point cloud that varies continuously with time $t \in \mathbb{R}$ and system-parameter values $\mu_1, \ldots, \mu_m \in \mathbb{R}$. We obtain a fibered filtration function $f: \K \times \mathbb{R}^{m+1} \to \mathbb{R}$ by defining $f(\cdot, (t, \bm{\mu})): \K \to \mathbb{R}$ to be the Vietoris--Rips filtration function for the point cloud $X(t, \bm{\mu})$ at all $(t, \bm{\mu}) \in \mathbb{R}^{m+1}$ (or any other filtration for the point cloud at each $(t, \bm{\mu})$). The \scom\ $\K$ is the simplicial complex that has a simplex for every subset of points in the point cloud.

% For example, in the D'Orsogna model discussed in the introduction, the parameters of the system include self-propulsion strength $\zeta$, friction strength $\beta$, the ratio $C_r/C_a$ of repulsion strength $C_r$ and attraction strength $C_a$, and the ratio $L_r/L_a$ of the characteristic length scales for repulsion and attraction \cite{dorsogna}.
\end{example}

\begin{example}\label{ex:image}

Consider a color image. Enumerate the pixels and let $r(i)$, $g(i)$, and $b(i)$ denote the red, green, and blue values of the $i$th pixel. Triangulate each pixel to obtain a simplicial complex $\K$. (Every pixel is split into two triangles.) Let $\base = \{(w_1, w_2) \in [0, 1]^2 \mid 0 \leq w_1 + w_2 \leq 1\}$. For all $(w_1, w_2) \in \base$, define $p(i, (w_1, w_2)) = w_1 r(i) + w_2 g(i) + (1 - w_1 - w_2)b(i)$. The function $p(i, (w_1, w_2))$ is a weighted average of the red, green, and blue values of the $i$th pixel. Define a piecewise-linear fibered filtration function $f: \K \times \base \to \mathbb{R}$ as follows. For a $2$-simplex $\sigma$, define $f(\sigma, {\bf w}) = p(i(\sigma), {\bf w})$, where $i(\sigma)$ is the pixel containing $\sigma$. For any other simplex $\sigma \in \K$, define $f(\sigma, {\bf w}) = \min \{f(\tau, {\bf w}) \mid \sigma \subseteq \tau, \, \dim(\tau) = 2\}$. At ${\bf w} = (1, 0)$, ${\bf w} = (0, 1)$, and ${\bf w} = (1, 1)$, the filtration function $f(\cdot, {\bf w})$ is the sublevel filtration by red, green, and blue pixel values, respectively. At all other ${\bf w} \in \base$, the filtration function $f(\cdot, {\bf w})$ is the sublevel filtration by a weighted average of the red, green, and blue pixel values.
\end{example}

\begin{example}\label{ex:orbits}

Let $\mu_1, \ldots, \mu_m \in \mathbb{R}$ denote the system-parameter values of some discrete dynamical system. For given system-parameter values $\bm{\mu} \in \mathbb{R}^m$, let $x_i^{\bm{\mu}} \in \mathbb{R}^n$ be the solution at the $i$th time step and let $X(\bm{\mu}) = \{x_0^{\bm{\mu}}, \ldots, x_k^{\bm{\mu}}\}$ be the set of points obtained after the first $k$ time steps. For example, persistent homology has been used to study orbits of the linked twist map (a discrete dynamical system) \cite{pers_images}. We obtain a fibered filtration function $f: \K \times \mathbb{R}^m \to \mathbb{R}$ by defining $f(\cdot, \bm{\mu}): \K \to \mathbb{R}$ to be the Vietoris--Rips filtration function for the point cloud $X(\bm{\mu})$ (or any other filtration for the point cloud at each $\bm{\mu}$). The \scom\ $\K$ has a simplex for every subset of points in the point cloud.
\end{example}

\begin{example}\label{ex:sublevel}
Suppose that $X(t) = \{x_1(t), \ldots, x_k(t)\}$ is a time-varying point cloud in a compact triangulable subset $S \subseteq \mathbb{R}^n$. Let $\rho_h(\cdot, t)$ be a kernel density estimator at time $t$, with bandwidth parameter $h > 0$. For fixed $h$ and $t$, we define a filtered complex by considering sublevel sets of $\rho_h$ as follows. Let $\mathcal{K}$ be a triangulation of $S \subseteq \mathbb{R}^n$. A vertex $v$ of $\mathcal{K}$ is included in the \scom\ $\K_r^{(t, h)}$ if $\rho_h(v, t) \leq r$, and a simplex of $\mathcal{K}$ is included in $\K_r^{t, h}$ if all of its vertices are in $\K_r^{(t, h)}$. For each $t$ and $h$, the set $\{\K^{(t, h)}_r\}_{r \in \mathbb{R}}$ is a filtered complex. We obtain a fibered filtration function $f:\K \times \mathbb{R}_+^2 \to \mathbb{R}$ by defining $f(\cdot, (t, h))$ to be the filtration function associated with the filtered complex $\{\K_r^{(t, h)}\}_{r \in \mathbb{R}}$.

Density sublevels of time-varying point clouds were also considered by Corcoran et al. \cite{CorcoranJones}, who studied a school of fish swimming in a shallow pool that was modeled as a subset of $\mathbb{R}^2$. However, Corcoran et al. \cite{CorcoranJones} fixed a bandwidth parameter $h$ and a sublevel $r$, and only studied how the PH changed with time (by using zigzag PH).
\end{example}

\subsection{Comparison to multiparameter PH}\label{sec:multi}
Multiparameter PH was introduced in \cite{multi}; see \cite{multi_review} for a review. Typically, a fibered filtration function does not induce a multifiltration, but the fibered barcode of a multiparameter peristence module is an example of a PD bundle.

\subsubsection{Multifiltrations} We review the definition of a multifiltration and compare it to the definition of a fibered filtration.
\begin{definition}\label{def:multifilt}
A \emph{multifiltration} is a set $\{\K_{\bm{u}}\}_{\bm{u} \in \mathbb{Z}^n}$ of simplicial complexes such that if $\bm{u} \leq \bm{v}$, then $\K_{\bm{u}} \subseteq \K_{\bm{v}}$.
\end{definition}
\noindent The inclusion $\iota^{\bm{u}, \bm{v}}: \K_{\bm{u}} \hookrightarrow \K_{\bm{v}}$ induces a map $\iota_*^{\bm{u}, \bm{v}}: H_\homdim(\K_{\bm{u}}, \mathbb{F}) \to H_\homdim(\K_{\bm{v}}, \mathbb{F})$ from the $\homdim$th homology of $\K_{\bm{u}}$ to the $\homdim$th homology of $\K_{\bm{v}}$ over a field $\mathbb{F}$. Given a multifiltration $\{\K_{\bm{u}}\}_{\bm{u} \in \mathbb{Z}^n}$, the \emph{multiparameter persistence module} is the graded $\mathbb{F}[x_1, \ldots, x_n]$-module $\bigoplus_{\bm{u} \in \mathbb{Z}^n} H_\homdim(\K_{\bm{u}}, \mathbb{F})$. The action of $x_i$ on a homogeneous element $\gamma \in H_\homdim(\K_{\bm{u}}, \mathbb{F})$ is given by $x_i \gamma = \iota_{\bm{u}, \bm{v}}^*(\gamma)$, where $v_j = u_j + \delta_{ij}$.

\begin{remark}\label{rmk:general_multifilt}
Some researchers define multifiltrations more generally as functors $\mathcal{F}: \mathcal{P}\to \bf{Simp}$, where $\mathcal{P}$ is any poset and $\bf{Simp}$ is the category of simplicial complexes, with simplicial maps as morphisms. Definition \ref{def:multifilt} 
is the specific case in which $\mathcal{P} = \mathbb{Z}^n$ and $\mathcal{F}_{\bm{u} \leq \bm{v}}: \K_{\bm{u}} \to \K_{\bm{v}}$ is an inclusion map.
\end{remark}

To see why a fibered filtration function does not typically induce a multifiltration, consider a fibered filtration function $\{f_\bp : \K^\bp \to \mathbb{R}\}_{\bp \in \base}$ with $\base = \mathbb{R}^n$. Let $\K_r^{\bp} : = \{\sigma \in \K^\bp \mid f_\bp(\sigma) \leq r\}$ denote the $r$-sublevel set of $f_\bp$. It is not necessarily the case that $\K_s^{\bp_1} \subseteq \K_r^{\bp_2}$ whenever $r \leq s$ and $\bp_1 \leq \bp_2$. Moreover, there are no canonical simplicial maps $\K_s^{\bp_1} \to \K_r^{\bp_2}$, so it is not guaranteed that $\{\K_r^{\bp}\}_{(\bp, r) \in \mathbb{R}^n \times \mathbb{R}}$ is a multifiltration even in the general sense of Remark \ref{rmk:general_multifilt}. Therefore, such a set of filtered complexes cannot be analyzed using multiparameter persistent homology.

\subsubsection{Fibered barcodes}\label{sec:fibered}
Consider a multifiltration $\{\K_{\bm{u}}\}_{\bm{u} \in \mathbb{R}^n}$. Let $\lines$ denote the space of lines in $\mathbb{R}^n$ with a parameterization of the form
\begin{align*}
    &L: \mathbb{R} \to \mathbb{R}^n\,, \\
    &L(r) = r\bm{v} + \bm{b}\,, \qquad \bm{v} \in [0, \infty)^n,\, \norm{\bm{v}} = 1,\, \bm{b} \in \mathbb{R}^n.
\end{align*}
For example, when $n = 2$, the space $\lines$ is the space of lines in $\mathbb{R}^2$ with non-negative slope, including vertical lines. For each line $L \in \lines$, we define $\K^L_r := \K_{L(r)}$. That is, $\{\K^L_r\}_{r \in \mathbb{R}}$ is the filtered complex obtained by restricting the multifiltration $\{\K_{\bm{u}}\}_{\bm{u} \in \mathbb{R}^n}$ to the line $L \subseteq \mathbb{R}^n$. The set $\{\K^L_r\}_{r \in \mathbb{R}}$ is a filtered complex because $L(r)_i \leq L(s)_i$ for all $r \leq s$ and $i \in \{1, \ldots, n\}$. The \emph{fibered barcode} \cite{fibered} is the map that sends $L \in \lines$ to the barcode for the persistent homology of $\{\K^L_r\}_{r \in \mathbb{R}}$.

A fibered barcode is a PD bundle whose base space is $\base = \lines$. For $L \in \lines$, the filtration function is
\begin{align*}
    &f_L : \K^L \to \mathbb{R}\,, \\
    &f_L(\sigma) = \min\{r \mid \sigma \in \K_{L(r)}\}\,,
\end{align*}
where $\K^L := \bigcup_{r \in \mathbb{R}} \mathcal{K}_{L(r)}$. Unlike the other examples in Section \ref{sec:examples}, the simplicial complex $\K^L$ is not independent of $L \in \lines$.

%%%%%%%%%%%%%%%%%%%%%%%%%%%%%%%%%%%%%

\section{A Stratification of the Base Space}\label{sec:partition}

There are many different notions of a stratified space \cite{stratified}. In the present \difference{thesis}{paper}, what we mean by a stratification is the following definition.
\begin{definition}\label{def:stratification}
A \emph{stratification} of a topological space $B$ is a nested sequence
\begin{equation*}
  \base^0 \subseteq \base^1 \subseteq \base^2 \subseteq \cdots \subseteq \base^n = \base
\end{equation*}
of closed subsets $\base^m$ such that the following hold:
\begin{enumerate}
    \item For all $m$, the space $\base^m \setminus \base^{m-1}$ is either empty or a smooth $m$-dimensional submanifold of $\base$ (where we set $\base^{-1} := \emptyset$). The \emph{$m$-dimensional strata} are the connected components of $\base^m \setminus \base^{m-1}$. We denote the set of strata by $\mathcal{Y}$.
    \item The set $\mathcal{Y}$ of strata is \emph{locally finite}: every $\bp \in \base$ has an open neighborhood $U$ such that $U$ intersects finitely many elements of $\mathcal{Y}$.
    \item The set $\mathcal{Y}$ of strata satisfy the \emph{Axiom of the Frontier}: If $Y_{\alpha}$, $Y_{\beta} \in \mathcal{Y}$ are strata such that $Y_{\beta} \cap \overline{Y_{\alpha}}$, then $Y_{\beta} \subseteq \overline{Y_{\alpha}}$. We write that $Y_{\beta}$ is a \emph{face} of $Y_{\alpha}$.
\end{enumerate}
\end{definition}
\noindent In the present \difference{thesis}{paper}, $\base$ is the base space of a fibered filtration function.

Theorem \ref{thm:stratified} says that for any ``generic'' smooth fibered filtration function (see Section \ref{sec:generic}), the base space $\base$ can be stratified so that in each stratum $Y \subseteq \base$, the set of (birth, death) simplex pairs is constant and can be used to obtain $\pdgm(f_\bp)$ for any $\bp \in Y$.

\subsection{Piecewise-linear fibered filtrations} As a warm-up, we first consider piecewise-linear fibered filtration functions, which will provide intuition for the general case. However, note that Proposition \ref{prop:polyhedrons_constant} below is not simply a special case of \Cref{thm:stratified}, in which we consider generic smooth fibered filtrations on smooth compact manifolds (see \Cref{sec:generic}). Here, we consider \emph{all} piecewise-linear fibered filtrations, rather than only generic piecewise-linear fibered filtrations.

First, we establish some notation and definitions.
\begin{definition}
An \emph{open half-space} of an affine space $A$ is one of the two connected components of $A \setminus H$ for some hyperplane $H$.
\end{definition}
\noindent For example, an open half-space of $\mathbb{R}^n$ is a set of the form $\{\bm{x} \in \mathbb{R}^n \mid A \bm{x} > \bm{b}\}$ for some $n \times n$ matrix $A$ and some vector $\bm{b} \in \mathbb{R}^n$.
\begin{definition}
An \emph{open polyhedron} is the intersection of open half-spaces.
\end{definition}
\noindent For example, an open polygon $P$ (a polygon without its faces) in $\mathbb{R}^2$ is an open 2D polyhedron because it is the intersection of half-spaces of $\mathbb{R}^2$. The 1D faces (i.e., edges) of $P$ are 1D polyhedra because an edge is a subset of a line $L \subseteq \mathbb{R}^2$ and the edge is the intersection of two half-spaces of $L$. The 0D faces (i.e., vertices) of $P$ are 0D polyhedra.

We fix a simplicial complex $\K$ for the remainder of this section. For each pair $(\sigma, \tau)$ of simplices in $\K$, we define
\begin{equation}\label{eq:I_def}
    I(\sigma, \tau) := \{\bp \in \base \mid f(\sigma, \bp) = f(\tau, \bp)\}\,.
\end{equation}
\begin{lemma}\label{lem:constantorder}
    Suppose that $f: \K \times \base \to \mathbb{R}$ is a continuous fibered filtration function (i.e., $f(\sigma, \cdot): \base \to \mathbb{R}$ is continuous for all simplices $\sigma \in \K$) and that $Y$ is a path-connected subset of $\base$. If all pairs $(\sigma, \tau)$ of simplices satisfy either $I(\sigma, \tau) \cap Y = \emptyset$ or $I(\sigma, \tau) \cap Y = Y$, then the simplex order is constant in $Y$. That is, there is a strict partial order $\prec_Y$ on $\K$ such that for all $y \in Y$, we have that $\prec_{f(\cdot, y)}$ is the same as $\prec_Y$.
\end{lemma}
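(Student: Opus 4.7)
The plan is to reduce the constancy of the full simplex order on $Y$ to a pairwise statement: for each unordered pair $\{\sigma, \tau\}$ of simplices in $\K$, the relation between $f(\sigma, y)$ and $f(\tau, y)$ (strictly less than, strictly greater than, or equal) is the same for every $y \in Y$. Once this is established for every pair, I would define $\prec_Y$ by declaring $\sigma \prec_Y \tau$ if and only if $\sigma \prec_{f(\cdot, y)} \tau$ holds for some (equivalently, every) $y \in Y$; the strict-partial-order axioms for $\prec_Y$ then follow immediately from those for $\prec_{f(\cdot, y)}$ at any single fixed $y \in Y$.

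To establish the pairwise claim, I would fix simplices $\sigma, \tau$ and split into the two cases permitted by the hypothesis. If $I(\sigma, \tau) \cap Y = Y$, then by the definition in \eqref{eq:I_def} we have $f(\sigma, y) = f(\tau, y)$ for every $y \in Y$, so neither $\sigma \prec_{f(\cdot, y)} \tau$ nor $\tau \prec_{f(\cdot, y)} \sigma$ ever holds, and the pairwise relation is trivially constant on $Y$. If instead $I(\sigma, \tau) \cap Y = \emptyset$, I would introduce the auxiliary function $g_{\sigma,\tau}: Y \to \mathbb{R}$ defined by $g_{\sigma,\tau}(y) := f(\sigma, y) - f(\tau, y)$. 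Because $f(\sigma, \cdot)$ and $f(\tau, \cdot)$ are continuous on $\base$ by hypothesis, so is $g_{\sigma,\tau}$, and by the assumption on $I(\sigma,\tau) \cap Y$ the function $g_{\sigma,\tau}$ is nowhere zero on $Y$.

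The key step is then to invoke path-connectedness of $Y$: for any path $\gamma: [0,1] \to Y$ joining $y_0, y_1 \in Y$, the composition $g_{\sigma,\tau} \circ \gamma: [0,1] \to \mathbb{R} \setminus \{0\}$ is continuous, so by the intermediate value theorem it cannot change sign between $y_0$ and $y_1$. Hence $g_{\sigma,\tau}$ has a common sign on all of $Y$, which means that either $f(\sigma, y) < f(\tau, y)$ for every $y \in Y$ or $f(\sigma, y) > f(\tau, y)$ for every $y \in Y$. This yields the pairwise constancy we needed.

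I do not expect a substantive obstacle: the argument is essentially a one-line application of the intermediate value theorem to a nowhere-vanishing continuous function, repeated across the finitely many pairs of simplices in $\K$. The only thing that requires care is the verification that the candidate relation $\prec_Y$ really is a strict partial order, but since irreflexivity and transitivity are inherited verbatim from any fixed $\prec_{f(\cdot, y)}$, this is routine.
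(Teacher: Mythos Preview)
Your proposal is correct and follows essentially the same approach as the paper: both arguments split on the two allowed cases for $I(\sigma,\tau)\cap Y$, and in the nontrivial case both apply the intermediate value theorem to the difference $f(\sigma,\cdot)-f(\tau,\cdot)$ along a path in $Y$ to conclude the sign is constant. Your write-up is slightly more explicit in defining $\prec_Y$ and checking its axioms, but the underlying argument is the same.
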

\begin{proof}
Let $(\sigma, \tau)$ be a pair of simplices. If $I(\sigma, \tau) \cap Y = Y$, then $f(\sigma, \bp) = f(\tau, \bp)$ for all $\bp \in Y$, so $\sigma \not\prec_{f(\cdot, \bp)} \tau$ and $\tau \not\prec_{f(\cdot, \bp)} \sigma$ for all $\bp \in Y$. If $I(\sigma, \tau) \cap Y = \emptyset$, then $f(\sigma, \bp) \neq f(\tau, \bp)$ for all $\bp \in Y$. Let $\bp_0$ be a point in $Y$. Without loss of generality, $\tau \prec_{f(\cdot, \bp_0)} \sigma$. Therefore, $f(\sigma, \bp_0) > f(\tau, \bp_0)$. To obtain a contradiction, suppose that $f(\sigma, \bp_1) < f(\tau, \bp_1)$ for some $\bp_1 \in Y$.  Let $\gamma:[0, 1] \to Y$ be a continuous path from $\bp_0$ to $\bp_1$, and let $g(s) = f(\sigma, \gamma(s)) - f(\tau, \gamma(s))$ for $s \in [0,1]$. By the Intermediate Value Theorem, there is an $s_* \in [0,1]$ such that $f(\sigma, \gamma(s_*)) = f(\tau, \gamma(s_*))$, but this is a contradiction. Therefore, $f(\sigma, \bp) > f(\tau, \bp)$ for all $\bp \in Y$, which implies that $\tau \prec_{f(\cdot, \bp)} \sigma$ for all $\bp \in Y$.
\end{proof}

\begin{proposition}\label{prop:polyhedrons_constant}
Let $\base$ be a simplicial complex. If $f: \K \times \base \to \mathbb{R}$ is a piecewise-linear fibered filtration function, then $\base$ can be partitioned into disjoint polyhedra $P$ on which the simplex order induced by $f$ is constant. That is, there is a strict partial order $\prec_P$ on $\K$ such that $\prec_{f(\cdot, p)}$ is the same as $\prec_P$ for all $p \in P$. Consequently, the set $\{(\sigma_b, \sigma_d)\}$ of (birth, death) simplex pairs for $f$ is constant in each $P$ and for any $\bp \in P$, the persistence diagram $\pdgm(f_{\bp})$ consists of the diagonal and the multiset $\{(f(\sigma_b), f(\sigma_d)\}$.
\end{proposition}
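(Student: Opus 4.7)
The plan is to exploit the piecewise-linear structure of $f$ to build, within each simplex of $\base$, a hyperplane arrangement whose cells have constant simplex order, and then to invoke Lemmas~\ref{lem:constantorder} and \ref{lem:only_order} to conclude.

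First, I would fix a (relatively open) simplex $\Delta$ of $\base$ and let $A$ be its affine span. For each ordered pair $(\sigma, \tau)$ of simplices in $\K$, the function $g_{\sigma, \tau}(\bp) := f(\sigma, \bp) - f(\tau, \bp)$ is linear on $\Delta$, since $f(\sigma, \cdot)$ and $f(\tau, \cdot)$ are both linear on $\Delta$ by hypothesis. Extend $g_{\sigma, \tau}$ to an affine function on $A$. If $g_{\sigma, \tau}$ is identically zero on $A$, then $\Delta \subseteq I(\sigma, \tau)$. Otherwise the zero set $H_{\sigma, \tau} := g_{\sigma, \tau}^{-1}(0) \cap A$ is an affine hyperplane of $A$, and $I(\sigma, \tau) \cap \Delta = H_{\sigma, \tau} \cap \Delta$. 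Let $\mathcal{H}_\Delta$ be the (finite) collection of these hyperplanes. The arrangement $\mathcal{H}_\Delta$ partitions $A$ into finitely many relatively open convex polyhedra (its flats/chambers), each of which is either contained in a given $H_{\sigma, \tau}$ or disjoint from it. Intersecting this partition with $\Delta$ yields a partition of $\Delta$ into relatively open convex polyhedra $\{P\}$: each $P$ is the intersection of an arrangement cell with $\Delta$, hence an intersection of open half-spaces of $A$ (using both the defining half-spaces of $\Delta$ and those coming from $\mathcal{H}_\Delta$).

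Next, I would verify that the constant-order hypothesis of Lemma~\ref{lem:constantorder} holds on each such $P$. For any pair $(\sigma, \tau)$: if $g_{\sigma, \tau} \equiv 0$ on $A$, then $P \subseteq I(\sigma, \tau)$; otherwise $P$ is contained in a single cell of the arrangement $\mathcal{H}_\Delta$, which is either contained in $H_{\sigma, \tau}$ (so $P \subseteq I(\sigma, \tau)$) or disjoint from it (so $P \cap I(\sigma, \tau) = \emptyset$). Since $P$ is convex and hence path-connected, Lemma~\ref{lem:constantorder} yields a strict partial order $\prec_P$ on $\K$ with $\prec_{f(\cdot, \bp)}\,=\,\prec_P$ for every $\bp \in P$. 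Then Lemma~\ref{lem:only_order} gives a single simplex indexing $\idx_{f(\cdot, \bp)}$ and a single set $\{(\sigma_b, \sigma_d)\}$ of (birth, death) simplex pairs valid for every $\bp \in P$, and $\pdgm(f_\bp)$ consists of the diagonal together with $\{(f(\sigma_b, \bp), f(\sigma_d, \bp))\}$ (plus rays for unpaired births).

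Finally, I would assemble the partition of $\base$ by taking the disjoint union, over all (relatively open) simplices $\Delta$ of $\base$, of the polyhedra produced above; since the relatively open simplices of $\base$ already form a disjoint partition of $\base$, this yields a partition of $\base$ into disjoint open polyhedra with the claimed properties. The only subtle point, and the one I would pay most attention to, is verifying that the cells produced by intersecting the arrangement $\mathcal{H}_\Delta$ with $\Delta$ genuinely are (open) polyhedra in the sense of the definitions preceding the proposition, and that the dichotomy ``contained in $H_{\sigma,\tau}$ or disjoint from it'' truly holds cell-by-cell; both follow from elementary convex-geometric properties of hyperplane arrangements, but they are the technical heart of the argument.
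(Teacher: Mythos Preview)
Your proposal is correct and follows essentially the same approach as the paper: analyze each simplex $\Delta$ of $\base$, observe that the sets $I(\sigma,\tau)\cap\Delta$ are cut out by hyperplanes (or are all of $\Delta$ or empty) because $f(\sigma,\cdot)-f(\tau,\cdot)$ is affine on $\Delta$, partition $\Delta$ by the resulting arrangement, and then invoke Lemmas~\ref{lem:constantorder} and~\ref{lem:only_order}. The only organizational difference is that the paper works with closed top-dimensional simplices and throws $\partial\Delta$ into the partitioning set, whereas you iterate over relatively open simplices of all dimensions; your version is slightly more explicit about the arrangement structure and the ``contained in or disjoint from $H_{\sigma,\tau}$'' dichotomy, but the content is the same.
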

\begin{proof}
	Let $\Delta$ be an $n$-dimensional simplex of the simplicial complex $\base$ and let $\sigma$ and $\tau$ be distinct simplices of $\K$.
Because $f(\sigma, \cdot)\vert_\Delta$ and $f(\tau, \cdot)\vert_\Delta$ are linear,
	the set $I(\sigma, \tau) \cap \Delta$ is one of the following:
	\begin{enumerate}
	    \item the intersection of an $(n-1)$-dimensional hyperplane with $\Delta$\,;
		\item $\emptyset$\,;
		\item $\Delta$\,;
		\item a vertex of $\Delta\,$.
	\end{enumerate}
	Therefore, the set $\partial \Delta \bigcup \{I(\sigma, \tau) \cap \Delta \mid \emptyset \subset (I(\sigma, \tau)\cap \Delta) \subset \Delta \}_{\sigma, \tau \in \K}$ partitions $\Delta$ into polyhedra. By Lemma \ref{lem:constantorder}, the simplex order induced by $f$ is constant on each polyhedron. The last statement of Proposition \ref{prop:polyhedrons_constant} follows from Lemma \ref{lem:only_order}.
\end{proof}

For example, if $\base$ is a triangulated surface, then the set
\begin{equation}\label{eq:lines}
    L := \bigcup_{\Delta \in \base} \partial \Delta \cup \{I(\sigma, \tau)\cap \Delta \mid \emptyset \subset (I(\sigma, \tau)\cap \Delta) \subset \Delta \}_{\sigma, \tau \in \K}
\end{equation}
partitions $\Delta$ into polyhedra such that the simplex order is constant on each polyhedron, including the 1D polyhedra (i.e., edges) and the 0D polyhedra (i.e., vertices). The polygonal subdivision induced by $L$ is called a \emph{line arrangement} $\mathcal{A}(L)$. For an example of such a line arrangement, see Figure \ref{fig:polygon partition}.
\begin{figure}
    \centering
    \includegraphics[width = .5\textwidth]{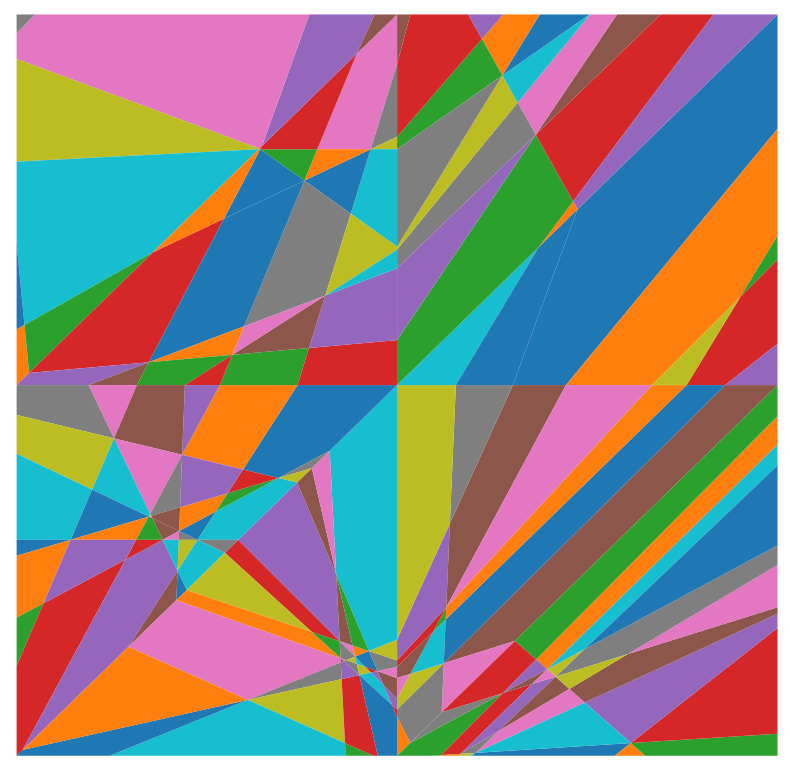}
    \caption{A line arrangement that represents the partition of a triangulated surface $\base$ (the base space) into polyhedra on which the simplex order is constant.}
    \label{fig:polygon partition}
\end{figure}

\subsection{Generic smooth fibered filtrations}\label{sec:generic}
We now consider generic smooth fibered filtration functions. Throughout Section \ref{sec:generic}, we consider a smooth fibered filtration function of the form $f: \K \times \base \to \mathbb{R}$ for some $n$-dimensional smooth compact manifold $\base$ and some simplicial complex $\K$. (A fibered filtration $f$ is \emph{smooth} if $f(\sigma, \cdot): \base \to \mathbb{R}$ is smooth for all $\sigma \in \K$.) To make precise the notion of a ``generic'' fibered filtration function, we consider perturbations of $f$ of a certain form. Because the filtration value of a simplex $\sigma$ must be at least as large as the filtration value of any face $\tau$ at all $\bp \in \base$, we consider only perturbations $f_{\bm{a}}: \K \times \base \to \mathbb{R}$ of the form
\begin{equation*}
    f_{\bm{a}}(\sigma_i, \bp) := f(\sigma_i, \bp) + a_i\,,
\end{equation*}
where $\bm{a}$ is an element of the set
\begin{equation}\label{eq:Adef}
A := \{\bm{a} \in \mathbb{R}^N \mid a_i \leq a_j \text{ for all } i \leq j \}\
\end{equation}
and $\sigma_1, \ldots, \sigma_N$ are the simplices of $\K$, indexed such that $i < j$ if $\sigma_i$ is a proper face of $\sigma_j$. By construction, $f_{\bm{a}}$ is a fibered filtration function for all $\bm{a} \in A$. 

For each simplex $\sigma_k$ in $\K$, we define the manifold
\begin{equation*}
    M_k := \{(\bp, f(\sigma_k, \bp)) \mid \bp \in B\} \subseteq B \times \mathbb{R}\
\end{equation*}
and for each $\bm{a} \in A$, we define the manifold
\begin{equation}\label{eq:Ma_def}
    M_{\bm{a}, k} := \{(\bp, f_{\bm{a}}(\sigma_k, \bp)) \mid \bp \in \base\} \subseteq B \times \mathbb{R}\,.
\end{equation}
For each pair $(\sigma_i, \sigma_j)$ of simplices in $\K$, we define $I(\sigma_i, \sigma_j)$ as in \eqref{eq:I_def}. The set $I(\sigma_i, \sigma_j)$ is the projection of $M_i \cap M_j \subseteq \base \times \mathbb{R}$ to a subset of $\base$. For each $\bm{a} \in A$, we define the set
\begin{equation*}
I_{\bm{a}}(\sigma_i, \sigma_j) := \{\bp \in \base \mid f_{\bm{a}}(\sigma_i, \bp) = f_{\bm{a}}(\sigma_j, \bp)\}\,.
\end{equation*}
We also define
\begin{align*}
    E^m &:= \{I(\sigma_{i_1}, \sigma_{j_1}) \cap \cdots \cap I(\sigma_{i_m}, \sigma_{j_m})\}\,,
\end{align*}
which is the set of all $m$-way intersections of sets $I(\sigma_i, \sigma_j)$. For all $\bm{a} \in A$, we define 
\begin{equation}\label{eq:Eadef}
    E^m_{\bm{a}} := \{I_{\bm{a}}(\sigma_{i_1}, \sigma_{j_1}) \cap \cdots \cap I_{\bm{a}}(\sigma_{i_m}, \sigma_{j_m})\}\,.
\end{equation}
Lastly, we define
\begin{equation}\label{eq:Ekdef}
    E^{m, k}_{\bm{a}} := \{ I_{\bm{a}}(\sigma_{i_1}, \sigma_{j_1}) \cap \cdots \cap I_{\bm{a}}(\sigma_{i_m}, \sigma_{j_m}) \mid i_r, j_r \leq k \text{ for all } r \}\,,
\end{equation}
which is the set of $m$-way intersections that only involve the simplices $\sigma_1, \ldots, \sigma_k$.
\begin{remark}
    There are several facts to keep in mind. First, it is not guaranteed that $I_{\bm{a}}(\sigma_i, \sigma_j)$ is homeomorphic to $I(\sigma_i, \sigma_j)$ even for arbitrarily small $\bm{a}$. Additionally, the sets $I_{\bm{a}}(\sigma_i, \sigma_j)$ are not ``independent'' of each other; a perturbation of $f(\sigma, \cdot)$ for a single simplex $\sigma$ causes a perturbation of $I(\sigma, \tau)$ for all $\tau \in \K$. Furthermore, not every element of $E^m$ is an $(n-m)$-dimensional submanifold, even generically. For example, if $I(\sigma_{i_2}, \sigma_{i_1})$ and $I(\sigma_{i_3}, \sigma_{i_2})$ are $(n-1)$-dimensional submanifolds that intersect transversely, then $I(\sigma_{i_3}, \sigma_{i_1}) \cap I(\sigma_{i_3}, \sigma_{i_2}) \cap I(\sigma_{i_2}, \sigma_{i_1}) = I(\sigma_{i_3}, \sigma_{i_2}) \cap I(\sigma_{i_2}, \sigma_{i_1})$ is an $(n-2)$-dimensional submanifold, rather than an $(n-3)$-dimensional submanifold. Finally, $\bigcap_{r = 1}^m I(\sigma_{i_r}, \sigma_{j_r})$ is not necessarily equal to the projection of $\bigcap_{r = 1}^m (M_{i_r} \cap M_{j_r})$ to $\base$. In other words, not every intersection in $B$ lifts to an intersection of the manifolds $\{M_k\}_{k=1}^N \subseteq B \times \mathbb{R}$. These are the main subtleties in the proof of Theorem \ref{thm:stratified}.
\end{remark}
\begin{definition}
Let $S$ be an element of $E^m_{\bm{a}}$, where $E^m_{\bm{a}}$ is defined as in \eqref{eq:Eadef}. The set $S$ is \emph{$m$-reduced} if it equals a set of the form $I_{\bm{a}}(\sigma_{i_1}, \sigma_{j_1}) \cap \cdots \cap I_{\bm{a}}(\sigma_{i_m}, \sigma_{j_m})$, where $i_1 > i_2 > \cdots > i_m$ and $i_r > j_r$ for all $r$.
\end{definition}
\noindent For example, if $\sigma_{i_1}$, $\sigma_{i_2}$, and $\sigma_{i_3}$ are distinct simplices, then $I_{\bm{a}}(\sigma_{i_3}, \sigma_{i_2}) \cap I_{\bm{a}}(\sigma_{i_2}, \sigma_{i_1})$ is $2$-reduced, but $I_{\bm{a}}(\sigma_{i_3}, \sigma_{i_1}) \cap I_{\bm{a}}(\sigma_{i_3}, \sigma_{i_2}) \cap I_{\bm{a}}(\sigma_{i_2}, \sigma_{i_1})$ is not $3$-reduced. We define
\begin{align}
    \overline{E^m_{\bm{a}}} &:= \{S \in E^m_{\bm{a}} \mid S \text{ is } m\text{-reduced}\}\,, \label{eq:reduced_sets}\\
    \overline{E^{m, k}_{\bm{a}}} &:= \{S \in E^{m, k}_{\bm{a}} \mid S \text{ is } m\text{-reduced}\} \label{eq:Ek_reduced}\,,
\end{align}
where $E^m_{\bm{a}}$ is defined as in \eqref{eq:Eadef} and $E^{m, k}_{\bm{a}}$ is defined as in \eqref{eq:Ekdef}.

\begin{lemma}\label{lem:reduced_form} For all $m \geq 1$, all $k$, and all $\bm{a} \in A$, where $A$ is defined as in \eqref{eq:Adef}, every $S \in E^{m, k}_{\bm{a}}$ belongs to $\overline{E^{m', k}_{\bm{a}}}$ for some $m' \leq m$, where $E^{m, k}_{\bm{a}}$ and $\overline{E^{m', k}_{\bm{a}}}$ are defined as in \eqref{eq:Ekdef} and \eqref{eq:Ek_reduced}, respectively.
\end{lemma}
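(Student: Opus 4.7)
The plan is to reduce any intersection $S = \bigcap_{r=1}^m I_{\bm{a}}(\sigma_{i_r}, \sigma_{j_r}) \in E^{m,k}_{\bm{a}}$ to an equivalent intersection whose defining pairs form a carefully-oriented spanning forest on the relevant indices. Since $I_{\bm{a}}(\sigma_i, \sigma_i) = \base$ contributes no constraint, I first discard any $r$ with $i_r = j_r$ (decreasing $m$ accordingly) and so assume without loss of generality that $i_r \neq j_r$ for all $r$. To the remaining data I associate the simple graph $G_S$ with vertex set $V := \{i_1, j_1, \ldots, i_m, j_m\} \subseteq \{1, \ldots, k\}$ and edge set $\bigl\{\{i_r, j_r\} : 1 \leq r \leq m\bigr\}$. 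The key observation is that $\bp \in S$ if and only if the function $\nu \mapsto f_{\bm{a}}(\sigma_\nu, \bp)$ is constant on each connected component of $G_S$; this follows from the symmetry $I_{\bm{a}}(\sigma_u, \sigma_v) = I_{\bm{a}}(\sigma_v, \sigma_u)$ and the transitivity of equality. Hence, for any spanning forest $F$ of $G_S$,
\begin{equation*}
S \;=\; \bigcap_{\{u, v\} \in E(F)} I_{\bm{a}}(\sigma_u, \sigma_v).
\end{equation*}

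The reduction then amounts to choosing $F$ so that its oriented edges are strictly decreasing in their top coordinates. For each connected component $C_s$ of $G_S$ with $|C_s| \geq 2$, I would list its vertices in strictly decreasing order $c_1^{(s)} > c_2^{(s)} > \cdots > c_{t_s}^{(s)}$ and take the path $c_1^{(s)} - c_2^{(s)} - \cdots - c_{t_s}^{(s)}$ as the spanning tree contributed by $C_s$ to $F$. I orient the $l$th edge as $(c_l^{(s)}, c_{l+1}^{(s)})$, so that the top vertex $c_l^{(s)}$ strictly exceeds the bottom vertex. The top vertices within $C_s$ are $\{c_1^{(s)}, \ldots, c_{t_s - 1}^{(s)}\}$, and because distinct components are disjoint subsets of $\{1, \ldots, k\}$, the global collection of top vertices is a set of $m' := \sum_s (t_s - 1)$ distinct integers. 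Sorting this set in strictly decreasing order yields a relabeling $(i'_1, j'_1), \ldots, (i'_{m'}, j'_{m'})$ with $i'_1 > \cdots > i'_{m'}$, $i'_r > j'_r$, and every index at most $k$; this exhibits $S \in \overline{E^{m', k}_{\bm{a}}}$. The inequality $m' \leq m$ holds because any connected graph on $t_s$ vertices has at least $t_s - 1$ edges, so $m' = \sum_s(t_s - 1) \leq |E(G_S)| \leq m$.

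I do not anticipate a serious obstacle. The only real content is the transitivity claim that $S$ is determined by the connected-component partition of $G_S$; everything else is combinatorial bookkeeping. The one point that must be stated carefully is that the path spanning trees can be chosen independently inside each component, which is precisely what makes the top indices across distinct components automatically disjoint and hence globally sortable into an $m'$-reduced form.
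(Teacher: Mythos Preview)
Your proof is correct and takes a genuinely different route from the paper. The paper argues by induction on $m$: it peels off the factor $I_{\bm{a}}(\sigma_{i_1},\sigma_{j_1})$ with the largest top index, applies the inductive hypothesis to the remaining $(m-1)$-fold intersection to put it in reduced form, and then handles by cases the possible collision $i_1=i_2'$ using the identity $I_{\bm{a}}(\sigma_{i_1},\sigma_{j_1})\cap I_{\bm{a}}(\sigma_{i_1},\sigma_{j_2'}) = I_{\bm{a}}(\sigma_{i_1},\sigma_{j_1})\cap I_{\bm{a}}(\sigma_{j_1},\sigma_{j_2'})$, recursing once more if necessary. Your argument is more structural: you recognize that the intersection $S$ depends only on the partition of the index set into connected components of the constraint graph, and then exhibit an $m'$-reduced representative directly by choosing a decreasing-path spanning tree in each component. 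This buys you a one-shot proof with no case analysis and makes transparent why the reduced form exists (it is just a canonical spanning forest), at the cost of importing a small amount of graph language. The paper's inductive proof is more bare-hands but somewhat obscures this picture. One cosmetic point: after you discard self-loops, every vertex of $G_S$ lies on an edge, so your hedge ``with $|C_s|\geq 2$'' is automatically satisfied and $m'\geq 1$ unless every original pair was a self-loop; in that degenerate case $S=\base$, which is the empty intersection and sits in $\overline{E^{0,k}_{\bm{a}}}$ by convention.
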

\begin{proof}
    We prove the lemma by induction on $m$. 
    For all $k$, every $S \in E^{1, k}_{\bm{a}}$ is $1$-reduced by definition. Assume that Lemma \ref{lem:reduced_form} is true for $m-1 \geq 1$, and let $S$ be an element of $E^{m, k}_{\bm{a}}$. The set $S$ is equal to a set of the form
    \begin{equation*}
        I_{\bm{a}}(\sigma_{i_1}, \sigma_{j_1}) \cap \cdots \cap I_{\bm{a}}(\sigma_{i_m}, \sigma_{j_m})\,,
    \end{equation*}
    where $i_r > j_r$ for all $r$ and $k \geq i_1 \geq i_2 \geq \cdots \geq i_m$ without loss of generality. By the induction hypothesis,
    \begin{equation*}
        I_{\bm{a}}(\sigma_{i_2}, \sigma_{j_2}) \cap \cdots \cap I_{\bm{a}}(\sigma_{i_m}, \sigma_{j_m}) = I_{\bm{a}}(\sigma_{i_2'}, \sigma_{j_2'}) \cap \cdots \cap I_{\bm{a}}(\sigma_{i_{\ell}'}, \sigma_{j_{\ell}'})
    \end{equation*}
    for some $\ell \leq m$, where $i'_r > j'_r$ for all $r$ and $k \geq i_1 \geq i_2 \geq i'_2 > i'_3 > \cdots > i'_{\ell}$. If $i_1 > i'_2$, then $S$ is an element of $E^{\ell, k}$ and we are done. Otherwise,
    \begin{equation*}
        I_{\bm{a}}(\sigma_{i_1}, \sigma_{j_1}) \cap I_{\bm{a}}(\sigma_{i_2'}, \sigma_{j_2'}) = I_{\bm{a}}(\sigma_{i_1}, \sigma_{j_1}) \cap I_{\bm{a}}(\sigma_{j_1}, \sigma_{j_2'})
    \end{equation*}
    because $i_1 = i_2'$. If $j_1 = j_2'$, then 
    \begin{equation*}
    S = I_{\bm{a}}(\sigma_{i_1}, \sigma_{j_1}) \cap I_{\bm{a}}(\sigma_{i_3'}, \sigma_{j_3'}) \cap \cdots \cap I_{\bm{a}}(\sigma_{i_{\ell}'}, \sigma_{j_{\ell}'})\,,
    \end{equation*}
    so $S$ is $(\ell - 1)$-reduced. Otherwise, 
    \begin{equation*}
        S = I_{\bm{a}}(\sigma_{i_1}, \sigma_{j_1}) \cap I_{\bm{a}}(\sigma_{j_1}, \sigma_{j_2'}) \cap I_{\bm{a}}(\sigma_{i_3'}, \sigma_{j_3'}) \cap \cdots \cap I_{\bm{a}}(\sigma_{i_{\ell}'}, \sigma_{j_{\ell}'})\,,
    \end{equation*}
    where $k \geq i_1 > j_1, j_2'$ and $i_1 > i_r'$ for all $r \geq 3$. By the induction hypothesis, the set $I_{\bm{a}}(\sigma_{j_1}, \sigma_{j_2'}) \cap I_{\bm{a}}(\sigma_{i_3'}, \sigma_{j_3'}) \cap \cdots \cap I_{\bm{a}}(\sigma_{i_{\ell}'}, \sigma_{j_{\ell}'})$ belongs to $\overline{E^{\ell', k-1}_{\bm{a}}}$ for some $\ell' \leq \ell - 1$, so $S$ belongs to $\overline{E^{\ell' + 1, k}_{\bm{a}}}$, where $\ell' + 1 \leq \ell \leq m$.
\end{proof}

\begin{lemma}\label{lem:manifolds}
For almost every $\bm{a} \in A$ (where $A$ is defined as in \eqref{eq:Adef}), we have that $M_{\bm{a}, i}$ intersects $M_{\bm{a}, j}$ transversely\footnote{When manifolds $M_1$ and $M_2$ intersect transversely, we will use the notation $M_1 \pitchfork M_2$.} for $i \neq j$ and every $S \in \overline{E^m_{\bm{a}}}$ is either $\emptyset$ or an $(n-m)$-dimensional submanifold of $\base$ for all $m \in \{1, \ldots, n\}$, where $M_{\bm{a}, i}$ is defined as in \eqref{eq:Ma_def} and $\overline{E^m_{\bm{a}}}$ is defined as in \eqref{eq:reduced_sets}.
\end{lemma}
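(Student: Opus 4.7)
The plan is to derive both conclusions from a single application of the parametric transversality theorem (in the style of Abraham--Smale), using the shifts $\bm{a} \in A$ as the parameters and exploiting the $m$-reducedness in a crucial way. Since there are only finitely many pairs $(\sigma_i, \sigma_j)$ and finitely many $m$-reduced tuples for each $m \leq n$, once I prove each individual statement holds for almost every $\bm{a} \in A$, taking the intersection of these conditions (a finite intersection of full-measure sets) will yield the lemma.

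For the pairwise transversality $M_{\bm{a}, i} \pitchfork M_{\bm{a}, j}$, I note that $M_{\bm{a}, i}$ and $M_{\bm{a}, j}$ are graphs of smooth functions over $\base$ inside $\base \times \mathbb{R}$, so they meet transversely exactly when $0$ is a regular value of the function $h_{i,j}(\bp) := f(\sigma_i, \bp) - f(\sigma_j, \bp) + (a_i - a_j)$ on $\base$. Since shifting by $a_i - a_j$ only translates the target, Sard's theorem applied to $f(\sigma_i,\cdot) - f(\sigma_j,\cdot): \base \to \mathbb{R}$ guarantees that the set of bad shifts has measure zero in $\mathbb{R}$, hence the set of bad $\bm{a} \in A$ has measure zero as well.

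For the main statement about $m$-reduced intersections, I fix an $m$-reduced tuple $((\sigma_{i_1}, \sigma_{j_1}), \dots, (\sigma_{i_m}, \sigma_{j_m}))$ with $i_1 > i_2 > \cdots > i_m$ and $i_r > j_r$, and consider the smooth map
\begin{equation*}
G: \base \times A \to \mathbb{R}^m, \qquad G(\bp, \bm{a})_r := f(\sigma_{i_r}, \bp) - f(\sigma_{j_r}, \bp) + a_{i_r} - a_{j_r}.
\end{equation*}
The zero fiber of $G(\cdot, \bm{a})$ is exactly $\bigcap_{r=1}^m I_{\bm{a}}(\sigma_{i_r}, \sigma_{j_r})$, so by the preimage theorem it is either empty or an $(n-m)$-dimensional submanifold of $\base$ whenever $0$ is a regular value of $G(\cdot, \bm{a})$. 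By the parametric transversality theorem, it suffices to show that $0$ is a regular value of $G$ itself; then for almost every $\bm{a} \in A$, $0$ is a regular value of $G(\cdot, \bm{a})$. I verify regularity of $G$ by computing its partial derivatives in the $\bm{a}$ directions alone: the Jacobian block has $(r,i)$-entry $\delta_{i, i_r} - \delta_{i, j_r}$. Restricted to the columns indexed by $i_1, \dots, i_m$ (which are distinct by $m$-reducedness), this $m \times m$ matrix has $+1$'s on the diagonal, and the only possible off-diagonal $-1$ in row $r$, column $i_s$ requires $j_r = i_s$, which forces $s > r$ since $j_r < i_r$ and $i_s < i_r$ holds exactly when $s > r$. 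Hence the matrix is upper triangular with unit diagonal, so it is invertible at every point of $\base \times A$, and $G$ is a submersion everywhere.

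The main obstacle is precisely this rank computation: it is essential that $m$-reducedness forces the top indices $i_1, \dots, i_m$ to be distinct, so that the corresponding shift parameters $a_{i_1}, \dots, a_{i_m}$ act independently on the $m$ defining equations. Without reducedness one could have redundant equations (e.g. $I(\sigma_{i_3}, \sigma_{i_1}) \cap I(\sigma_{i_3}, \sigma_{i_2}) \cap I(\sigma_{i_2}, \sigma_{i_1})$, which is really a $2$-way intersection), and the Jacobian would drop rank, so one cannot conclude the codimension directly. A minor technical point is that the parameter space $A$ is a closed convex cone rather than an open set, but it has nonempty interior in $\mathbb{R}^N$, and the parametric transversality theorem applies on the interior; the boundary of $A$ has measure zero, so ``almost every $\bm{a} \in A$'' is unaffected. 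Combining the pairwise transversality conclusions with the $m$-reduced submanifold conclusions over all $m \in \{1, \dots, n\}$ and all finitely many tuples finishes the proof.
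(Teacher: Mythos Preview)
Your argument is correct, and it takes a genuinely different route from the paper's proof. The paper proceeds by first fixing an $\bm{a}$ for which all pairwise differences $a_j - a_i$ are regular values of $g^{ij}$ (with a uniform regularity margin $\epsilon^*$), and then runs an induction on the maximal simplex index $k$ appearing in an $m$-reduced intersection: at each step it writes $S = I_{\bm{a}+\bm{\epsilon}}(\sigma_k,\sigma_\ell) \cap S'$ with $S' \in \overline{E^{m-1,k-1}_{\bm{a}+\bm{\epsilon}}}$, treats $I_{\bm{a}+\bm{\epsilon}}(\sigma_k,\sigma_\ell)$ as a perturbation in the single parameter $\epsilon_k$, and invokes Thom's transversality theorem one intersection at a time.

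Your approach is cleaner and more conceptual: you package all $m$ equations into a single map $G:\base\times A\to\mathbb{R}^m$ and observe that $m$-reducedness is precisely the condition guaranteeing that the $m\times m$ submatrix of $\partial G/\partial\bm{a}$ corresponding to the distinct top indices $i_1>\cdots>i_m$ is upper triangular with unit diagonal. This makes the role of $m$-reducedness transparent---it is exactly what prevents rank collapse---and reduces the whole lemma to a single application of parametric transversality rather than a nested induction. The paper's inductive structure, on the other hand, is reused later (e.g.\ in Lemma~\ref{lem:inter_component}) and yields an explicit regularity margin $\epsilon^*$, which is convenient for the subsequent perturbation arguments; your argument gives the result more directly but does not isolate that margin. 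Your handling of the parameter space (working on the interior of $A$ and noting that $\partial A$ has measure zero) is also correct.
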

\begin{proof}
Define $g^{ij}(\bp) := f(\sigma_i, \bp) - f(\sigma_j, \bp)$ for all $i \neq j$. For almost every $\bm{a} \in A$, the quantity $a_j - a_i$ is a regular value of $g^{ij}$ by Sard's Theorem. The set of regular values is open for all $i \neq j$ because $g^{ij}$ is smooth and $\base$ is compact. Therefore, there is an $\epsilon^*$ such that for all $i \neq j$, every $y \in (a_j - a_i - 2\epsilon^*, a_j - a_i + 2\epsilon^*)$ is a regular value of $g^{ij}$.

Given an $\bm{a}$ and $\epsilon^*$ as above, it suffices to show that for almost every $\bm{\epsilon} \in \mathbb{R}^N$ with $\vert \epsilon_i\vert \leq \epsilon^*$, we have that every $S \in \overline{E^m_{\bm{a + \epsilon}}}$ is an $(n-m)$-dimensional submanifold of $\base$ for all $m$. For $m = 1$, every element of $\overline{E^1_{\bm{a + \epsilon}}}$ is of the form $I_{\bm{a + \epsilon}}(\sigma_i, \sigma_j)$ for some $i \neq j$. The set $I_{\bm{a + \epsilon}}(\sigma_i, \sigma_j)$ is the $(a_j - a_i + \epsilon_j - \epsilon_i)$-level set of $g^{ij}$. Because $(a_j - a_i + \epsilon_j - \epsilon_i)$ is a regular value of $g^{ij}$, the set $I_{\bm{a + \epsilon}}(\sigma_i, \sigma_j)$ is an $(n-1)$-dimensional submanifold of $\base$ and we must have $M_{\bm{a}, i} \pitchfork M_{\bm{a}, j}$.

For $m \geq 2$, observe that 
\begin{equation*}
    \overline{E^m_{\bm{a}}} = \overline{E^{m, 2}_{\bm{a}}} \cup \Big(\bigcup_{k=3}^N \overline{E^{m, k}_{\bm{a}}} \setminus \overline{E^{m, k-1}_{\bm{a}}}\Big)\,,
\end{equation*}
where $\overline{E^{m, k}_{\bm{a}}}$ is defined as in \eqref{eq:Ek_reduced}. We induct on $k \in \{2, \ldots, N \}$, where $N$ is the number of simplices in $\K$. When $k = 2$, we have
\begin{equation*}
    \overline{E^{m, 2}_{\bm{a + \epsilon}} }= \begin{cases}
        \{I_{\bm{a + \epsilon}}(\sigma_1, \sigma_2)\} \,, & m = 1 \\
        \emptyset \,, & m \geq 2\,,
    \end{cases}
\end{equation*}
so every $S \in \overline{E^{m, 2}_{\bm{a + \epsilon}}}$ is either $\emptyset$ or an $(n-m)$-dimensional submanifold of $\base$. Now suppose that $k > 2$ and that every element of $S \in \overline{E^{m, k-1}_{\bm{a + \epsilon}}}$ is either $\emptyset$ or an $(n-m)$-dimensional submanifold for all $m$. Every element $S$ in $\overline{E^{m, k}_{\bm{a + \epsilon}}} \setminus \overline{E^{m, k-1}_{\bm{a + \epsilon}}}$ is equal to a set of the form
\begin{equation*}
    S = I_{\bm{a + \epsilon}}(\sigma_k, \sigma_\ell) \cap S'\,,
\end{equation*}
where $\ell \leq k-1$ and $S' \in \overline{E^{m-1, k-1}_{\bm{a + \epsilon}}}$.
We define the vectors $\bm{\epsilon}^j := (0, \ldots, 0, \epsilon_j, 0, \ldots, 0)$ and $\bm{b}^j := \bm{a + \epsilon} - \bm{\epsilon}^j$.
Note that $I_{\bm{a} + \bm{\epsilon}}(\sigma_k, \sigma_{\ell}) = I_{\bm{b} + \bm{\epsilon}^k}(\sigma_k, \sigma_{\ell})$ because $b_i^k + \epsilon_i^k = a_i + \epsilon_i$ for all $i$, and $\overline{E^{m-1, k-1}_{\bm{a + \epsilon}}} = \overline{E^{m-1, k-1}_{\bm{b}^k}}$ because $a_i + \epsilon_i = b_i$ for all $i \leq k-1$. Therefore, every $S \in \overline{E^{m, k}_{\bm{a + \epsilon}}} \setminus \overline{E^{m, k-1}_{\bm{a + \epsilon}}}$ is equal to a set of the form
\begin{equation*}
    S = I_{\bm{b}^k + \bm{\epsilon}^k}(\sigma_k, \sigma_{\ell}) \cap S'
\end{equation*}
for some $S' \in \overline{E^{m-1, k-1}_{\bm{b}^k}}$ and $\ell \leq k-1$. Because $g^{\ell k}$ has no critical values between $b_{\ell} - b_k - \epsilon_k$ and $b_{\ell} - b_k$, we have that $I_{\bm{b}^k + \bm{\epsilon}^k}(\sigma_k, \sigma_\ell)$ is diffeomorphic to $I_{\bm{b}^k}(\sigma_k, \sigma_\ell)$ for all $\epsilon_k \in (-\epsilon^*,  \epsilon^*)$. In other words, $I_{\bm{b}^k + \bm{\epsilon}^k}(\sigma_k, \sigma_\ell)$ is a perturbation of $I_{\bm{b}^k}(\sigma_k, \sigma_\ell)$ for all $\epsilon_k \in (-\epsilon^*,  \epsilon^*)$. By Thom's Transversality Theorem, $I_{\bm{b}^k + \bm{\epsilon}^k}(\sigma_k, \sigma_\ell)$ intersects every $S' \in E_{\bm{b}^k}^{m-1, k-1}$ transversely for almost every $\epsilon_k \in (-\epsilon^*, \epsilon^*)$. This shows that $S$ is either $\emptyset$ or an $(n-m)$-dimensional submanifold of $\base$ for almost every $\epsilon_k \in (\epsilon^*, \epsilon^*)$. Because there are finitely many elements in $\overline{E^{m, k}_{\bm{a}}}$, we must have that every $S \in \overline{E^{m, k}_{\bm{a}}}$ is either $\emptyset$ or an $(n-m)$-dimensional submanifold of $\base$ for almost every $\epsilon_k \in (\epsilon^*, \epsilon^*)$. Induction on $k$ concludes the proof.
\end{proof}

\begin{lemma}\label{lem:stratum}
    For all $\bm{a} \in A$, with $A$ defined as in \eqref{eq:Adef}, define
    \begin{equation}\label{eq:strat_def}
        B^n_{\bm{a}} := B\, \qquad B^m_{\bm{a}} := \bigcup_{\ell \leq m} \bigcup_{S \in \overline{E_{\bm{a}}^{n- \ell}}} S \qquad \text{ for } m < n\,.
    \end{equation}
    If $\bm{a} \in A$ is such that every $S \in \overline{E_{\bm{a}}^{n - \ell}}$ is either $\emptyset$ or an $\ell$-dimensional smooth submanifold for every $\ell \in \{1, \ldots, n\}$, where $\overline{E_{\bm{a}}^{n - \ell}}$ is defined as in \eqref{eq:reduced_sets}, then $B^m_{\bm{a}} \setminus B^{m-1}_{\bm{a}}$ is the disjoint union of smooth $m$-dimensional manifolds.
\end{lemma}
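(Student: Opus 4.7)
The plan is to show that each point of $B^m_{\bm{a}} \setminus B^{m-1}_{\bm{a}}$ has a neighborhood in $\base$ on which the set difference coincides with a single smooth $m$-dimensional submanifold, so that the set difference is globally a smooth $m$-manifold (the disjoint union of its connected components). The principal tool is \Cref{lem:reduced_form}. As a preliminary observation, $B^{m-1}_{\bm{a}}$ is closed in $\base$: each $I_{\bm{a}}(\sigma_i, \sigma_j)$ is the zero set of the continuous function $f_{\bm{a}}(\sigma_i, \cdot) - f_{\bm{a}}(\sigma_j, \cdot)$, so every $S \in \overline{E^{n-\ell}_{\bm{a}}}$ is closed as a finite intersection of closed sets, and $B^{m-1}_{\bm{a}}$ is a finite union of such sets.

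Fix $p \in B^m_{\bm{a}} \setminus B^{m-1}_{\bm{a}}$ and set $\Sigma_p := \{(i, j) : i > j,\ f_{\bm{a}}(\sigma_i, p) = f_{\bm{a}}(\sigma_j, p)\}$ and $T_p := \bigcap_{(i,j) \in \Sigma_p} I_{\bm{a}}(\sigma_i, \sigma_j)$. By \Cref{lem:reduced_form}, $T_p \in \overline{E^{m'}_{\bm{a}}}$ for some $m'$, so $T_p \subseteq B^{n-m'}_{\bm{a}}$. Because $p \in B^m_{\bm{a}}$, there exists $S \in \overline{E^{n-m}_{\bm{a}}}$ with $p \in S$; all defining pairs of $S$ lie in $\Sigma_p$, so $T_p \subseteq S$. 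Since $p \in T_p \subseteq B^{n-m'}_{\bm{a}}$ and $p \notin B^{m-1}_{\bm{a}}$, we get $n - m' \geq m$; and $T_p \subseteq S$ (an $m$-submanifold of $\base$) forces $n - m' \leq m$. Hence $m' = n - m$, and $T_p \in \overline{E^{n-m}_{\bm{a}}}$ is itself a smooth $m$-dimensional submanifold by the hypothesis of the lemma.

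Next, I would pick a neighborhood $U$ of $p$ small enough that $U \cap B^{m-1}_{\bm{a}} = \emptyset$ (possible because $B^{m-1}_{\bm{a}}$ is closed and $p \notin B^{m-1}_{\bm{a}}$) and such that for every pair $(i, j) \notin \Sigma_p$ the inequality $f_{\bm{a}}(\sigma_i, q) \neq f_{\bm{a}}(\sigma_j, q)$ persists on $U$ (by continuity, using that there are finitely many such pairs). For any $q \in U$, the analogously defined $\Sigma_q$ satisfies $\Sigma_q \subseteq \Sigma_p$, hence $T_p \subseteq T_q$. If moreover $q \in B^m_{\bm{a}} \setminus B^{m-1}_{\bm{a}}$, the same dimension argument applied at $q$ yields $T_q \in \overline{E^{n-m}_{\bm{a}}}$; then $T_p$ is a closed smooth $m$-submanifold of the smooth $m$-manifold $T_q$, hence a union of its connected components, and shrinking $U$ so that $T_q \cap U$ is connected forces $q \in T_p$. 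Conversely, for $q \in T_p \cap U$ we have $\Sigma_p \subseteq \Sigma_q$, so $\Sigma_q = \Sigma_p$, $T_q = T_p$, and $q \in B^m_{\bm{a}} \setminus B^{m-1}_{\bm{a}}$. Therefore $(B^m_{\bm{a}} \setminus B^{m-1}_{\bm{a}}) \cap U = T_p \cap U$, an open smooth $m$-submanifold of $\base$, which provides the local manifold chart around $p$. The main obstacle is pinning down $\dim T_p = m$: the hypothesis constrains only reduced intersections, and it is \Cref{lem:reduced_form} that lets us equate the potentially redundant intersection $T_p$ with a reduced one and thereby apply the hypothesis.
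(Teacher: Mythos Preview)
Your approach is correct but takes a different route from the paper's. The paper argues globally: it writes
\[
B^m_{\bm{a}} \setminus B^{m-1}_{\bm{a}} \;=\; \bigcup_{S \in \overline{E^{n-m}_{\bm{a}}}} \Bigl(S \setminus B^{m-1}_{\bm{a}}\Bigr),
\]
observes that each piece is an open subset of the smooth $m$-manifold $S$ (hence itself a smooth $m$-manifold), and then asserts that distinct pieces are pairwise disjoint. By contrast, you work pointwise: at each $p$ you single out the canonical manifold $T_p$ through $p$ and show that $B^m_{\bm{a}} \setminus B^{m-1}_{\bm{a}}$ agrees with $T_p$ locally. Your route makes the use of \Cref{lem:reduced_form} explicit and the dimension squeeze $n-m' \leq m \leq n-m'$ is a clean way to pin down $\dim T_p$; the paper's route is shorter once one accepts the disjointness claim (which itself implicitly relies on \Cref{lem:reduced_form}).

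One step needs tightening. The phrase ``shrinking $U$ so that $T_q\cap U$ is connected'' is circular as written, because $T_q$ depends on the point $q\in U$ you are trying to control. The fix is to observe that there are only finitely many candidates for $T_q$: they are among the sets $T_\Sigma := \bigcap_{(i,j)\in\Sigma} I_{\bm{a}}(\sigma_i,\sigma_j)$ for $\Sigma\subseteq\Sigma_p$ with $T_\Sigma\in\overline{E^{n-m}_{\bm{a}}}$. Each such $T_\Sigma$ is an $m$-manifold containing $T_p$, and since both are $m$-dimensional closed submanifolds of $B$, the set $T_\Sigma\setminus T_p$ is closed in $T_\Sigma$ (hence in $B$) and does not contain $p$. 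Choose $U$ \emph{in advance} to avoid all of these finitely many closed sets, in addition to your two original conditions. Then for any $q\in (B^m_{\bm{a}}\setminus B^{m-1}_{\bm{a}})\cap U$ you have $q\in T_q = T_{\Sigma_q}$ and $q\notin T_{\Sigma_q}\setminus T_p$, so $q\in T_p$ as desired.
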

\begin{proof}
    We have that
\begin{equation*}
    \base_{\bm{a}}^m \setminus \base_{\bm{a}}^{m-1} = \bigcup_{S \in \overline{E^{n-m}_{\bm{a}}}} \Big(S \setminus \bigcup_{\substack{S' \in \overline{E^{n - \ell}_{\bm{a}}} \\ \ell \leq m-1}} S'\Big)\,.
\end{equation*}
If $S' \in \overline{E^{n - \ell}_{\bm{a}}}$ is a subset of $S \in \overline{E^{n-m}_{\bm{a}}}$, then $S'$ is a closed subset of $S$. Therefore, the set $S \setminus \Big(\bigcup_{\substack{S' \in \overline{E^{n - \ell}_{\bm{a}}} \\ \ell \leq m-1}} S'\Big)$ is an open subset of the smooth manifold $S$, which implies that $S \setminus \Big(\bigcup_{\substack{S' \in \overline{E^{n - \ell}_{\bm{a}}} \\ \ell \leq m-1}} S'\Big)$ is a smooth manifold. If $S_1$ and $S_2$ are distinct elements of $\overline{E^{n-m}_{\bm{a}}}$, then
\begin{equation*}
    \Big(S_1 \setminus \bigcup_{\substack{S' \in \overline{E^{n - \ell}_{\bm{a}}} \\ \ell \leq m-1}} S'\Big) \cap \Big(S_2 \setminus \bigcup_{\substack{S' \in \overline{E^{n - \ell}_{\bm{a}}} \\ \ell \leq m-1}} S'\Big) = \emptyset \,,
\end{equation*}
which completes the proof.
\end{proof}

For the remainder of Section \ref{sec:generic}, let $\{\base^m_{\bm{a}}\}_{m=0}^{n}$ be defined as in \eqref{eq:strat_def}, and define
\begin{equation}\label{eq:Ya_def}
    \mathcal{Y}_{\bm{a}}= \bigcup_{m=0}^n \mathcal{Y}^m_{\bm{a}}\,,
\end{equation}
where $\mathcal{Y}^m_{\bm{a}}$ is the set of connected components of $\base^m_{\bm{a}} \setminus \base^{m-1}_{\bm{a}}$ (with $\base^{-1}_{\bm{a}} := \emptyset$).

\begin{lemma}\label{lem:constantorder_strata}
Let $A$ be defined as in \eqref{eq:Adef}. If $\bm{a} \in A$ is such that each $Y \in \mathcal{Y}_{\bm{a}}$ is a manifold, then the simplex order induced by $f$ is constant in each $Y$. That is, there is a strict partial order $\prec_Y$ on $\K$ such that $\prec_{f_{\bm{a}}(\cdot, y)}$ is the same as $\prec_Y$ for all $y \in Y$.
\end{lemma}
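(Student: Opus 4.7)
The plan is to verify the hypothesis of Lemma \ref{lem:constantorder} on each stratum $Y$. Since $Y$ is a connected manifold by assumption, it is path-connected, so it suffices to show that for every pair $(\sigma_i, \sigma_j)$ of distinct simplices, the set $I_{\bm{a}}(\sigma_i, \sigma_j) \cap Y$ is either empty or equal to all of $Y$. Once this dichotomy is established, Lemma \ref{lem:constantorder} gives a common simplex order $\prec_Y$ on $Y$, and Lemma \ref{lem:only_order} finishes.

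Fix $Y \in \mathcal{Y}^m_{\bm{a}}$. The case $m = n$ is immediate because then $Y$ is disjoint from $B^{n-1}_{\bm{a}}$, while every $I_{\bm{a}}(\sigma_i, \sigma_j)$ lies in $\overline{E^1_{\bm{a}}} \subseteq B^{n-1}_{\bm{a}}$, so I assume $m < n$. The disjoint-union decomposition used in the proof of Lemma \ref{lem:stratum} shows that $Y$ is contained in a single $S \in \overline{E^{n-m}_{\bm{a}}}$ and that $Y \cap B^{m-1}_{\bm{a}} = \emptyset$. I would then analyze the auxiliary set $S \cap I_{\bm{a}}(\sigma_i, \sigma_j) \in E^{n-m+1}_{\bm{a}}$: by Lemma \ref{lem:reduced_form} it equals some $m'$-reduced set with $m' \leq n-m+1$, and because it is a subset of the $m$-dimensional manifold $S$ while reduced sets of degree $m'$ have dimension $n-m'$ by Lemma \ref{lem:manifolds}, dimension counting forces $m' \in \{n-m,\, n-m+1\}$.

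If $m' = n-m+1$, then $S \cap I_{\bm{a}}(\sigma_i, \sigma_j)$ belongs to $\overline{E^{n-m+1}_{\bm{a}}}$ and is therefore a subset of $B^{m-1}_{\bm{a}}$, so $Y \cap I_{\bm{a}}(\sigma_i, \sigma_j) \subseteq Y \cap B^{m-1}_{\bm{a}} = \emptyset$. If $m' = n-m$, then $S \cap I_{\bm{a}}(\sigma_i, \sigma_j)$ equals some $(n-m)$-reduced set $S''$, which is an $m$-dimensional smooth submanifold of $B$ contained in the $m$-dimensional smooth submanifold $S$. A slice-chart argument at each $p \in S''$ (using that $T_p S'' = T_p S$ since both are $m$-dimensional and one is contained in the other) shows that $S''$ is open in $S$; being the preimage of a point under a continuous function, it is also closed in $S$. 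Hence $S''$ is a union of connected components of $S$, and since $Y$ is connected and contained in $S$, either $Y \subseteq S'' \subseteq I_{\bm{a}}(\sigma_i, \sigma_j)$ or $Y \cap I_{\bm{a}}(\sigma_i, \sigma_j) = \emptyset$, as required.

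The main technical obstacle is the dimension bookkeeping in the second paragraph. Bounding the reduced degree $m'$ from below requires combining the reduction procedure of Lemma \ref{lem:reduced_form} with the dimension count of Lemma \ref{lem:manifolds}, and the clopen-in-$S$ conclusion in the case $m' = n-m$ relies on both $S$ and $S''$ being smooth submanifolds of $B$ of the same dimension. The rest is assembly: once the two cases above are verified, Lemma \ref{lem:constantorder} applies directly to $Y$, and Lemma \ref{lem:only_order} translates the common simplex order into identical $(\mathrm{birth}, \mathrm{death})$ simplex pairs across $Y$.
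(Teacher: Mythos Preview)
Your plan and its endpoint match the paper's exactly: establish the dichotomy $Y\cap I_{\bm a}(\sigma,\tau)\in\{\emptyset,Y\}$ for every pair $(\sigma,\tau)$ and then apply Lemma~\ref{lem:constantorder}. The difference lies in how you justify the dichotomy.

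You argue via a dimension count that appeals to Lemma~\ref{lem:manifolds}: $S$ has dimension $m$, reduced sets of degree $m'$ have dimension $n-m'$, so $m'\in\{n-m,\,n-m+1\}$. But Lemma~\ref{lem:manifolds} (and the disjointness claim in the proof of Lemma~\ref{lem:stratum} that you also cite) is stated only for \emph{almost every} $\bm a\in A$, whereas the hypothesis of Lemma~\ref{lem:constantorder_strata} assumes merely that each $Y\in\mathcal Y_{\bm a}$ is a manifold. That hypothesis does not by itself guarantee that every $S\in\overline{E^{n-m}_{\bm a}}$ is an $m$-dimensional submanifold, so the line ``$S$ is $m$-dimensional \ldots\ reduced sets of degree $m'$ have dimension $n-m'$'' is not available to you. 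As written, you therefore prove only the generic-$\bm a$ version of the lemma. That is enough for the eventual use in Theorem~\ref{thm:stratified}, but it is a genuine gap relative to the lemma as stated.

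The paper's route is shorter and, as its parenthetical remark indicates, works for every $\bm a$. The dichotomy is purely combinatorial: for any $S\in\overline{E^{n-m}_{\bm a}}$ written in reduced form and any pair $(\sigma_i,\sigma_j)$, one step of the reduction procedure from Lemma~\ref{lem:reduced_form} shows that $S\cap I_{\bm a}(\sigma_i,\sigma_j)$ either equals $S$ or can be written as an $(n-m+1)$-reduced intersection, hence lies in $B^{m-1}_{\bm a}$. Since $Y\subseteq S\setminus B^{m-1}_{\bm a}$, this immediately gives $Y\cap I_{\bm a}(\sigma_i,\sigma_j)\in\{\emptyset,Y\}$, with no smooth structure or dimension bookkeeping required; your clopen-in-$S$ analysis of the case $m'=n-m$ then becomes unnecessary.
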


\begin{proof}
Let $Y \in \mathcal{Y}_{\bm{a}}$. The set $Y$ is connected by definition. Because $Y$ is a manifold, it is also path-connected. For each pair $(\sigma, \tau)$ of simplices, we have by construction that $Y \cap I_{\bm{a}}(\sigma, \tau)$ equals either $\emptyset$ or $Y$. (In fact, this statement holds for all $\bm{a} \in A$ and does not require $Y$ to be a manifold.) By Lemma \ref{lem:constantorder}, the simplex order is constant in $Y$.
\end{proof}

\begin{lemma}\label{lem:tangent_spaces}
For almost every $\bm{a} \in A$ (where $A$ is defined as in \eqref{eq:Adef}), we have that $\bigcap_{r = 1}^m I_{\bm{a}}(\sigma_{i_r}, \sigma_{j_r})$ is a submanifold of $B$ and
\begin{equation}\label{eq:tangent_spaces_intersection}
    T_\bp \Big(\bigcap_{r = 1}^m I_{\bm{a}}(\sigma_{i_r}, \sigma_{j_r}) \Big) = \bigcap_{r = 1}^m T_\bp \Big( I_{\bm{a}}(\sigma_{i_r}, \sigma_{j_r})\Big)
\end{equation}
for all points $\bp \in \bigcap_{r = 1}^m I_{\bm{a}}(\sigma_{i_r}, \sigma_{j_r})$ and all sets $\{(i_r, j_r)\}_{r = 1}^m$ of index pairs such that $\{i_r, j_r\} \neq \{i_s, j_s\}$ if $r \neq s$.
\end{lemma}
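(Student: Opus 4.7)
The plan is to reduce to the setting of Lemma \ref{lem:manifolds} via the reduction of Lemma \ref{lem:reduced_form}, and then translate the transversality used in that proof into the desired tangent-space identity. The key structural observation is that if $(i_r,j_r)$ is one of the index pairs, the function $g^{i_r j_r}(p) = f(\sigma_{i_r},p) - f(\sigma_{j_r},p)$ satisfies the cocycle identity $g^{ac} = g^{ab} + g^{bc}$, so any linear relation among the $g^{i_r j_r}$'s (and thus among their differentials $dg^{i_r j_r}_p$) arises from cycles in the ``edge set'' $\{\{i_r,j_r\}\}_{r=1}^m$. Consequently, the dependencies among the defining equations are combinatorial and independent of $\bm{a}$ and $p$.

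First I would fix a collection $\{(i_r, j_r)\}_{r=1}^m$ and apply Lemma \ref{lem:reduced_form} to obtain an $m'$-reduced expression
\begin{equation*}
    \bigcap_{r=1}^m I_{\bm{a}}(\sigma_{i_r}, \sigma_{j_r}) \;=\; S \;:=\; \bigcap_{s=1}^{m'} I_{\bm{a}}(\sigma_{i'_s}, \sigma_{j'_s})
\end{equation*}
with $m' \leq m$. By Lemma \ref{lem:manifolds}, for almost every $\bm{a} \in A$ the set $S$ is either empty or a smooth $(n-m')$-dimensional submanifold of $\base$. Inspecting the inductive proof of Lemma \ref{lem:manifolds}, the new hypersurface $I_{\bm{a}}(\sigma_k, \sigma_\ell)$ added at each step intersects the previous reduced intersection transversely in $\base$. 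Since $I_{\bm{a}}(\sigma_k,\sigma_\ell) = (g^{\ell k})^{-1}(a_\ell - a_k)$ is the regular level set of $g^{\ell k}$, transverse intersection with the previous reduced stratum is equivalent to $dg^{i'_s j'_s}_p$, $s = 1,\ldots,m'$, being linearly independent at every $p \in S$. Hence $S$ is locally the preimage of a regular value of the submersion $G := (g^{i'_1 j'_1}, \ldots, g^{i'_{m'} j'_{m'}}): \base \to \mathbb{R}^{m'}$, and
\begin{equation*}
    T_p S \;=\; \ker(dG_p) \;=\; \bigcap_{s=1}^{m'} \ker\bigl(dg^{i'_s j'_s}_p\bigr) \;=\; \bigcap_{s=1}^{m'} T_p I_{\bm{a}}(\sigma_{i'_s}, \sigma_{j'_s}).
\end{equation*}

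Next, for each original pair $(i_r, j_r)$ not appearing in the reduced list, the reduction procedure of Lemma \ref{lem:reduced_form} expresses $g^{i_r j_r}$ as a finite sum $\sum_s c_s^{(r)} g^{i'_s j'_s}$ via telescoping of cocycle identities, so $dg^{i_r j_r}_p = \sum_s c_s^{(r)} dg^{i'_s j'_s}_p$. Therefore $\ker(dg^{i_r j_r}_p) \supseteq \bigcap_s \ker(dg^{i'_s j'_s}_p) = T_p S$, which gives the inclusion $T_p S \subseteq T_p I_{\bm{a}}(\sigma_{i_r}, \sigma_{j_r})$ for every $r$, and hence $T_p S \subseteq \bigcap_{r=1}^m T_p I_{\bm{a}}(\sigma_{i_r}, \sigma_{j_r})$. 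The reverse inclusion $\bigcap_{r=1}^m T_p I_{\bm{a}}(\sigma_{i_r}, \sigma_{j_r}) \subseteq \bigcap_{s=1}^{m'} T_p I_{\bm{a}}(\sigma_{i'_s}, \sigma_{j'_s}) = T_p S$ is immediate since the reduced pairs form a subset of the original pairs (after the set-level identification above). This establishes \eqref{eq:tangent_spaces_intersection}.

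Finally, since $\K$ is finite there are only finitely many tuples $\{(i_r, j_r)\}_{r=1}^m$ to consider; the intersection over all of them of the corresponding full-measure subsets of $A$ remains of full measure, so the statement holds simultaneously for all admissible index sets for almost every $\bm{a} \in A$. I expect the main obstacle to be Step 2, namely rigorously extracting the linear independence of the reduced differentials from the transversality argument in Lemma \ref{lem:manifolds}; one must verify that the inductive application of Thom's Transversality Theorem there produces genuine pointwise transversality on the reduced intersection (not merely that the intersection is a manifold of the right dimension on some open subset), which in turn requires checking that ``perturbation of $I_{\bm{b}^k}(\sigma_k, \sigma_\ell)$'' preserves the submersion property on $S'$ uniformly in $p$.
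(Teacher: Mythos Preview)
Your approach is correct and genuinely different from the paper's. The paper does not work directly with the cocycle structure of the $g^{ij}$'s in $B$; instead it lifts to $B\times\mathbb{R}$ via the graphs $M_{\bm{a},k}$, builds a finite open cover on which the index pairs are partitioned into groups whose intersections lift (Lemma~\ref{lem:open_cover}), and then proves the tangent-space identity separately \emph{within} a group (Lemma~\ref{lem:intra_component}, using transversality of the $M_{\bm{a},k}$'s upstairs via Lemma~\ref{lem:manifold_transverse}) and \emph{across} groups (Lemma~\ref{lem:inter_component}, a further inductive perturbation argument). Your route---stay in $B$, pass to the reduced (spanning-forest) form, and use that each $g^{i_rj_r}$ lies in the $\mathbb{R}$-span of the reduced $g^{i'_sj'_s}$'s---is considerably more economical: it replaces three appendix lemmas by a linear-algebra observation. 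Your anticipated obstacle in Step~2 is not serious: the inductive step of Lemma~\ref{lem:manifolds} already establishes genuine pointwise transversality $I_{\bm{a}}(\sigma_k,\sigma_\ell)\pitchfork S'$, and for a hypersurface $H$ this always gives $T_p(H\cap S')=T_pH\cap T_pS'$, so the inductive hypothesis there can simply be strengthened to carry the tangent-space identity along.

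One genuine slip: in the reverse inclusion you assert that ``the reduced pairs form a subset of the original pairs,'' which is false---e.g.\ reducing $I_{\bm{a}}(\sigma_3,\sigma_1)\cap I_{\bm{a}}(\sigma_3,\sigma_2)$ introduces the new pair $(2,1)$. The fix is to run your cocycle argument symmetrically: the reduction procedure of Lemma~\ref{lem:reduced_form} only ever introduces a new pair via an identity of the form $g^{j_1j'_2}=g^{i'_2j'_2}-g^{i_1j_1}$, so inductively each reduced $g^{i'_sj'_s}$ lies in the span of the original $g^{i_rj_r}$'s, whence $\bigcap_r\ker dg^{i_rj_r}_p\subseteq\bigcap_s\ker dg^{i'_sj'_s}_p=T_pS$ as needed.
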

\begin{proof}
    Because there are finitely many sets of index pairs, it suffices to fix a set $\{(i_r, j_r)\}_{r = 1}^m$ of index pairs and show that \eqref{eq:tangent_spaces_intersection} holds for all $y \in \bigcap_{r = 1}^m I_{\bm{a}}(\sigma_{i_r}, \sigma_{j_r})$ for almost every $\bm{a} \in A$. 
    By Lemmas \ref{lem:reduced_form} and \ref{lem:manifolds}, the set $\bigcap_{r = 1}^m I_{\bm{a}}(\sigma_{i_r}, \sigma_{j_r})$ is a manifold for almost every $\bm{a} \in A$.
    By Lemma \ref{lem:open_cover}, there is a finite open cover $\{U_k\}_{k=1}^K$ of $B$ such that for each $k$, there is a disjoint partition $\bigcup_{\ell} J_{\ell, k} = \{1, \ldots, m\}$ such that $\{i_r, j_r \mid r \in J_{\ell_1, k} \} \cap \{i_r, j_r \mid r \in J_{\ell_2, k} \} = \emptyset$ if $\ell_1 \neq \ell_2$ and
    \begin{equation*}
        \pi\Big(\bigcap_{r \in J_{\ell, k}} (M_{\bm{a}, i_r} \cap M_{\bm{a}, j_r})\Big) \cap U_k = \bigcap_{r \in J_{\ell, k}} I_{\bm{a}}(\sigma_{i_r}, \sigma_{j_r}) \cap U_k
    \end{equation*}
    for all $\ell$, where $\pi$ is the projection $\pi: \base \times \mathbb{R} \to \base$.\footnote{Recall that an arbitrary intersection $\bigcap_r I_{\bm{a}}(\sigma_{i_r}, \sigma_{j_r}) \subseteq \base$ does not necessarily lift to an intersection $\bigcap_{r=1}^m (M_{\bm{a}, i} \cap M_{\bm{a}, j}) \subseteq \base \times \mathbb{R}$. \Cref{lem:open_cover} says that in local neighborhoods $U \subseteq \base$, we can partition the set $\{1, \ldots, m\}$ into subsets $J_\ell$ such that the intersection $\bigcap_{r \in J_\ell} I_{\bm{a}}(\sigma_{i_r}, \sigma_{j_r})$ does lift to a subset of the intersection $\bigcap_{r \in J_\ell} (M_{\bm{a}, i} \cap M_{\bm{a}, j})$.} Because the number $K$ of open sets is finite, it suffices to fix $U_k$ and show that \eqref{eq:tangent_spaces_intersection} holds for all $\bp \in \bigcap_{r = 1}^m I_{\bm{a}}(\sigma_{i_r}, \sigma_{j_r}) \cap U_k$ for almost every $\bm{a} \in A$.

    By Lemma \ref{lem:inter_component}, we have
    \begin{equation*}
         T_\bp\Big(\bigcap_{r = 1}^m I_{\bm{a}}(\sigma_{i_r}, \sigma_{j_r}) \Big) 
         = T_\bp\Big( \bigcap_{\ell} \bigcap_{r \in J_{\ell, k}}  I_{\bm{a}}(\sigma_{i_r}, \sigma_{j_r}) \Big)
         = \bigcap_{\ell} T_\bp \Big(\bigcap_{r \in J_{\ell, k}} \Big( I_{\bm{a}}(\sigma_{i_r}, \sigma_{j_r})\Big)\Big)
    \end{equation*}
    for all $\bp \in \bigcap_{r = 1}^m I_{\bm{a}}(\sigma_{i_r}, \sigma_{j_r}) \cap U_k$ for almost every $\bm{a} \in A$. By Lemma \ref{lem:intra_component}, we have
    \begin{equation*}
        \bigcap_{\ell} T_\bp \Big(\bigcap_{r \in J_{\ell, k}} \Big( I_{\bm{a}}(\sigma_{i_r}, \sigma_{j_r})\Big)\Big) = \bigcap_{\ell}\bigcap_{r \in J_{\ell, k}} T_\bp (I_{\bm{a}}(\sigma_{i_r}, \sigma_{j_r})
    \end{equation*}
    for all $\bp \in \bigcap_{r = 1}^m I_{\bm{a}}(\sigma_{i_r}, \sigma_{j_r}) \cap U_k$ for almost every $\bm{a} \in A$.
\end{proof}

For any strict partial order $\prec$ on $\K$, we define
\begin{align}\label{eq:Zdef}
    Z^{\prec}_{\bm{a}} := &\{\bp \in B \mid f_{\bm{a}}(\sigma, \bp) < f_{\bm{a}}(\tau, \bp) \text{ if } \sigma \prec \tau \notag \\ 
    &\qquad \text{ and } f_{\bm{a}}(\sigma, \bp) = f_{\bm{a}}(\tau, \bp) \text{ if } \sigma \not\prec \tau \text{ and } \tau \not\prec \sigma\}\,.
\end{align}
That is, $Z_{\bm{a}}^{\prec}$ is the subset of $\base$ such that for all $z$ in $Z_{\bm{a}}^{\prec}$, the strict partial order $\prec_{f_{\bm{a}}(\cdot, z)}$ is the same as $\prec$. 

\begin{lemma}\label{lem:locallyfinite}
Let $A$ be defined as in \eqref{eq:Adef}. If $\bm{a} \in A$ is such that
% for all $\ell \in \{1, \ldots, n\}$, every $S \in \overline{E_{\bm{a}}^{n - \ell}}$ is either $\emptyset$ or an $\ell$-dimensional smooth submanifold,
\begin{enumerate}
\item every $Y \in \mathcal{Y}_{\bm{a}}$ is a manifold, where $\mathcal{Y}_{\bm{a}}$ is defined as in \eqref{eq:Ya_def}, 
\item $M_{\bm{a}, i} \pitchfork M_{\bm{a}, j}$ for all $i \neq j$, where $M_{\bm{a}, i}$ is defined as in \eqref{eq:Ma_def},
\item $\bigcap_{r= 1}^m I_{\bm{a}}(\sigma_{i_r}, \sigma_{j_r})$ is a manifold for all sets $\{(i_r, j_r\}_{r =1}^m$ of index pairs, and
\item $T_{\bp}\Big(\bigcap_{r=1}^m I_{\bm{a}}(\sigma_{i_r}, \sigma_{j_r})\Big) = \bigcap_{r= 1}^m T_{\bp} \Big(I_{\bm{a}}(\sigma_{i_r}, \sigma_{j_r})\Big)$ for all sets $\{(i_r, j_r\}_{r =1}^m$ of index pairs and all $\bp \in \bigcap_{r= 1}^m I_{\bm{a}}(\sigma_{i_r}, \sigma_{j_r})$,
\end{enumerate}
then $\mathcal{Y}_{\bm{a}}$ is locally finite.
\end{lemma}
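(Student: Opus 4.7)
My plan is to show that each $\bp \in B$ has an open neighborhood $U$ meeting only finitely many elements of $\mathcal{Y}_{\bm{a}}$. I would first localize using the fact that $\K$ has finitely many simplices. Writing $h_{ij} := f_{\bm{a}}(\sigma_i, \cdot) - f_{\bm{a}}(\sigma_j, \cdot)$, the set $P(\bp) := \{(i,j) : h_{ij}(\bp) = 0\}$ is finite. For every $(k,\ell) \notin P(\bp)$, the zero set $I_{\bm{a}}(\sigma_k, \sigma_\ell)$ is closed in $B$ and does not contain $\bp$, so I can intersect finitely many complements to obtain an open neighborhood $U_0$ of $\bp$ disjoint from every $I_{\bm{a}}(\sigma_k, \sigma_\ell)$ with $(k,\ell) \notin P(\bp)$. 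Consequently, the only intersections $S \in \bigcup_m \overline{E^m_{\bm{a}}}$ that can meet $U_0$ are those built from pairs in $P(\bp)$, of which there are only finitely many. By Lemma~\ref{lem:constantorder_strata}, every stratum $Y \in \mathcal{Y}_{\bm{a}}$ with $Y \cap U_0 \neq \emptyset$ is contained in a single ``sign-pattern set''
\begin{equation*}
Z^\sigma := \{z \in U_0 : \mathrm{sgn}(h_{ij}(z)) = \sigma_{ij} \text{ for all } (i,j) \in P(\bp)\}
\end{equation*}
for some $\sigma \in \{-,0,+\}^{P(\bp)}$; since the number of sign patterns is finite, it would suffice to bound the number of connected components of each $Z^\sigma \cap U$ for some (possibly smaller) neighborhood $U \subseteq U_0$.

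Next I would reduce this local count to the combinatorics of the central hyperplane arrangement
\begin{equation*}
\mathcal{A}(\bp) := \{\ker dh_{ij}(\bp) : (i,j) \in P(\bp)\} \subseteq T_{\bp} B\,.
\end{equation*}
Hypothesis (2) gives $dh_{ij}(\bp) \neq 0$ for every $(i,j) \in P(\bp)$, and hypothesis (4) gives the clean intersection property
\begin{equation*}
T_{\bp}\Bigl(\bigcap_{(i,j) \in Q} I_{\bm{a}}(\sigma_i, \sigma_j)\Bigr) = \bigcap_{(i,j) \in Q} T_{\bp} I_{\bm{a}}(\sigma_i, \sigma_j) \qquad \text{for all } Q \subseteq P(\bp)\,.
\end{equation*}
The arrangement $\mathcal{A}(\bp)$ has finitely many relatively open faces, indexed by the sign patterns of the linear functionals $dh_{ij}(\bp)$ on $T_{\bp} B$. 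My goal is to show that for sufficiently small $U \subseteq U_0$, the sign-pattern decomposition $\{Z^\sigma \cap U\}$ is combinatorially equivalent to the face decomposition of $\mathcal{A}(\bp)$, so in particular each nonempty $Z^\sigma \cap U$ is connected. This immediately yields the desired bound on the number of strata meeting $U$.

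The main obstacle is this last combinatorial equivalence, especially when the differentials $\{dh_{ij}(\bp) : (i,j) \in P(\bp)\}$ are linearly dependent (in which case the hypersurfaces cannot in general be simultaneously linearly straightened in a chart). My approach would be to pick a maximal linearly independent subset $dh_{i_1 j_1}(\bp), \ldots, dh_{i_k j_k}(\bp)$ and use the implicit function theorem to obtain a smooth chart on some $U \subseteq U_0$ in which each $I_{\bm{a}}(\sigma_{i_r}, \sigma_{j_r})$ becomes the coordinate hyperplane $\{x_r = 0\}$. For each remaining $(i,j) \in P(\bp)$, the identity $dh_{ij}(\bp) = \sum_r c_r\, dh_{i_r j_r}(\bp)$ would let me compare $h_{ij}$ with its linearization $L_{ij} := \sum_r c_r h_{i_r j_r}$: the two agree to first order at $\bp$, and clean intersection forces the higher-order remainder to vanish on the full intersection $\bigcap_r I_{\bm{a}}(\sigma_{i_r}, \sigma_{j_r})$. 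A rescaling argument on small spheres $S_\rho(\bp)$ (or equivalently a $C^1$-closeness / curve-selection argument as $\rho \to 0$) should then show that, after possibly shrinking $U$, the sign of $h_{ij}$ on each candidate open chamber is determined by the sign of $L_{ij}$, and the combinatorial type of the arrangement $\{I_{\bm{a}}(\sigma_i, \sigma_j) \cap U\}_{(i,j) \in P(\bp)}$ agrees with that of $\mathcal{A}(\bp)$. I expect the most delicate step to be this last passage: making precise the claim that the higher-order wiggles of the dependent hypersurfaces cannot produce extra connected components, and handling the case in which several dependent differentials satisfy multiple linear relations simultaneously.
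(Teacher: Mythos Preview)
Your proposal is correct and follows essentially the same strategy as the paper: localize near $\bp$ to the finitely many hypersurfaces $I_{\bm{a}}(\sigma_i,\sigma_j)$ through $\bp$, straighten to the central hyperplane arrangement $\mathcal{A}(\bp)$ in $T_\bp B$, and conclude that the strata meeting the neighborhood are in bijection with the (finitely many) faces of $\mathcal{A}(\bp)$. The paper organizes the argument differently. Its proof of this lemma is only four lines, deferring all the work to two appendix lemmas: one (Lemma~\ref{lem:Z_decomp}) observes that for each strict partial order $\prec$ on $\K$ the set $Z_{\bm{a}}^{\prec}$ is a disjoint union of strata, and the other (Lemma~\ref{lem:uniqueY}) shows that every point has a neighborhood meeting at most one stratum from each $Z_{\bm{a}}^{\prec}$; since $\K$ is finite there are finitely many $\prec$, and intersecting the neighborhoods gives local finiteness. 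The content of Lemma~\ref{lem:uniqueY} is precisely your local-straightening step. There the paper simply \emph{asserts} the existence of a homeomorphism $\phi:U\to\mathcal{B}$ carrying each $I_{\bm{a}}(\sigma_i,\sigma_j)\cap U$ to a linear hyperplane, citing the exponential map together with hypothesis~(4) for ``compatibility'' of the individual straightenings; your proposal to build $\phi$ explicitly via the implicit function theorem on a maximal independent subset of the $dh_{ij}(\bp)$, and then control the dependent hypersurfaces by a first-order comparison, is a more detailed treatment of exactly the point the paper glosses over.
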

\begin{proof}
    Let $\bp$ be a point in $\base$. There are finitely many strict partial orders $\prec_1, \ldots, \prec_i$ on $\K$. By Lemma \ref{lem:Z_decomp}, we have that for each $j \in \{1, \ldots, i\}$, there is a subset $\mathcal{Y}^{\prec_j}_{\bm{a}} \subseteq \mathcal{Y}_{\bm{a}}$ such that $Z^{\prec_j}_{\bm{a}} = \bigcup_{Y \in \mathcal{Y}^{\prec_j}_{\bm{a}}} Y$. For each $j$, the point $\bp$ has a neighborhood $U_j$ that intersects at most one $Y \in \mathcal{Y}^{\prec_j}_{\bm{a}}$ by Lemma \ref{lem:uniqueY}. Therefore $\bigcap_{j=1}^i U_j$ is a neighborhood of $\bp$ that intersects at most $i$ elements of $\mathcal{Y}_{\bm{a}}$.
\end{proof}

\begin{lemma}\label{lem:frontier}
Let $A$ be defined as in \eqref{eq:Adef}. If $\bm{a} \in A$ is such that 
\begin{enumerate}
\item every $S \in \overline{E_{\bm{a}}^{n - \ell}}$ is an $\ell$-dimensional smooth submanifold for every $\ell \in \{1, \ldots, n\}$, where $\overline{E_{\bm{a}}^{n - \ell}}$ is defined as in \eqref{eq:reduced_sets},
\item $M_{\bm{a}, i} \pitchfork M_{\bm{a}, j}$ for all $i \neq j$, where $M_{\bm{a}, i}$ is defined as in \eqref{eq:Ma_def}, 
\item $\bigcap_{i=r}^m I_{\bm{a}}(\sigma_{i_r}, \sigma_{j_r})$ is a manifold for all sets $\{(i_r, j_r\}_{r =1}^m$ of index pairs, and
\item $T_{\bp}\Big(\bigcap_{i=r}^m I_{\bm{a}}(\sigma_{i_r}, \sigma_{j_r})\Big) = \bigcap_{i=1}^m T_{\bp} \Big(I_{\bm{a}}(\sigma_{i_r}, \sigma_{j_r})\Big)$ for all sets $\{(i_r, j_r\}_{r =1}^m$ of index pairs and all $\bp \in \bigcap_{i=r}^m I_{\bm{a}}(\sigma_{i_r}, \sigma_{j_r})$,
\end{enumerate}
then $\mathcal{Y}_{\bm{a}}$ satisfies the Axiom of the Frontier in Definition \ref{def:stratification}, where $\mathcal{Y}_{\bm{a}}$ is defined as in \eqref{eq:Ya_def}
\end{lemma}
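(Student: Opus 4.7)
The plan is to prove the Axiom of the Frontier via an open-and-closed argument. Take strata $Y_\alpha, Y_\beta \in \mathcal{Y}_{\bm{a}}$ with $Y_\beta \cap \overline{Y_\alpha} \neq \emptyset$, pick $\bp_0$ in the intersection, and choose a sequence $\bp_n \in Y_\alpha$ with $\bp_n \to \bp_0$. By the constancy argument of Lemma~\ref{lem:constantorder_strata} each stratum $Y$ has a well-defined simplex order $\prec_Y$, and continuity of $f_{\bm{a}}(\sigma, \cdot)$ at $\bp_0$ forces $\prec_\beta$ to be a coarsening of $\prec_\alpha$ by equalities, i.e., $P_\alpha \subseteq P_\beta$ where $P_Y$ records pairs incomparable in $\prec_Y$. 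Writing $S_Y := \bigcap_{(\sigma,\tau) \in P_Y} I_{\bm{a}}(\sigma,\tau)$ and $D := P_\beta \setminus P_\alpha$, we have $S_\beta \subseteq S_\alpha$, both smooth submanifolds near $\bp_0$ by assumptions (1) and (3). Since $A' := Y_\beta \cap \overline{Y_\alpha}$ is nonempty and closed in the connected set $Y_\beta$, it suffices to show $A'$ is open in $Y_\beta$.

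For openness at $\bp_0$, introduce local coordinates $(u, \eta) \in \mathbb{R}^{\dim S_\beta} \times \mathbb{R}^k$ on $S_\alpha$ via the tubular neighborhood theorem, with $S_\beta = \{\eta = 0\}$ locally, $\bp_0 = (u_0, 0)$, and $k := \dim S_\alpha - \dim S_\beta$. Let $g^{\sigma\tau} := f_{\bm{a}}(\sigma, \cdot) - f_{\bm{a}}(\tau, \cdot)$ and $\Phi := (g^{\sigma\tau})_{(\sigma,\tau)\in D}: S_\alpha \to \mathbb{R}^{|D|}$. Combining assumption (4) applied to both $S_\alpha$ and $S_\beta$ yields $\ker(d\Phi\vert_{\bp_0}) = T_{\bp_0} S_\beta$, so $d\Phi\vert_{\bp_0}$ has rank $k$. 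Pick $D' = \{d_1, \ldots, d_k\} \subseteq D$ whose differentials at $\bp_0$ are linearly independent; after an implicit-function change of the $\eta$-coordinates, arrange $g^{d_i}(u, \eta) = \eta_i$. Because each $g^{\sigma\tau}$ for $(\sigma,\tau) \in D$ vanishes on $S_\beta = \{\eta = 0\}$, Hadamard's lemma gives $g^{\sigma\tau}(u, \eta) = \sum_j \eta_j H_j^{\sigma\tau}(u, \eta)$ for $(\sigma, \tau) \in D \setminus D'$ with each $H_j^{\sigma\tau}$ smooth.

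Denote by $s_\alpha(\sigma, \tau) \in \{\pm 1\}$ the sign of $g^{\sigma\tau}$ on $Y_\alpha$ and define the open convex cone
\begin{equation*}
K := \{\epsilon \in \mathbb{R}^k : \epsilon_i\, s_\alpha(d_i) > 0\ \forall\, d_i \in D';\ s_\alpha(\sigma, \tau)\textstyle\sum_j H_j^{\sigma\tau}(u_0, 0)\,\epsilon_j > 0\ \forall\, (\sigma, \tau) \in D \setminus D'\}\,.
\end{equation*}
Granting $K \neq \emptyset$, pick $\epsilon^* \in K$. For any $\bp' = (u', 0) \in Y_\beta$ sufficiently close to $\bp_0$ and any small $t > 0$, the point $(u', t\epsilon^*) \in S_\alpha$ has each $g^{\sigma\tau}$ of sign $s_\alpha(\sigma, \tau)$ for $(\sigma, \tau) \in D$ (via the coordinates for $D'$, and via the Hadamard expansion plus openness of $K$ for $D \setminus D'$, whose leading term dominates the $o(t)$ correction), and continuity preserves the signs for pairs comparable in both orders. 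Thus $(u', t\epsilon^*) \in Z_{\bm{a}}^{\prec_\alpha}$; since $Z_{\bm{a}}^{\prec_\alpha}$ near $\bp_0$ is locally modeled on a neighborhood of $u_0$ times the convex (hence connected) cone $K$, the point $(u', t\epsilon^*)$ lies in the single connected component $Y_\alpha$ containing $\bp_n$ for large $n$. Letting $t \to 0$ gives $\bp' \in \overline{Y_\alpha}$, establishing that $A'$ is open.

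The main obstacle is establishing $K \neq \emptyset$. The naive limit $w := \lim(\bp_n - \bp_0)/\|\bp_n - \bp_0\|$ along a subsequence only places $w$ in $\overline{K}$, since one of the functionals $\epsilon \mapsto \sum_j H_j^{\sigma\tau}(u_0,0)\epsilon_j$ may annihilate $w$ along a ``pinched'' limit direction. I propose the following resolution. Because $Y_\alpha$ is open in the $(\dim S_\alpha)$-dimensional manifold $S_\alpha$, and its complement in $S_\alpha$ near $\bp_0$ has positive codimension (as a subset of the lower-dimensional manifolds produced by Lemma~\ref{lem:manifolds}), the tangent cone $T_{\mathrm{cone}}(Y_\alpha) \subseteq T_{\bp_0} S_\alpha$ has nonempty interior. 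By assumption (2), each $dg^{\sigma\tau}\vert_{\bp_0}$ for $(\sigma,\tau) \in D$ is nonzero, cutting out a proper codimension-one subspace of $T_{\bp_0} S_\alpha$. Choosing $w$ in the interior of $T_{\mathrm{cone}}(Y_\alpha)$ that avoids the finite union of these hyperplanes yields $dg^{\sigma\tau}\vert_{\bp_0}(w) \neq 0$ for every $(\sigma, \tau) \in D$; and since $g^{\sigma\tau}(\bp_n)/\|\bp_n - \bp_0\| \to dg^{\sigma\tau}\vert_{\bp_0}(w)$ has sign $s_\alpha(\sigma, \tau)$ along the sequence, the nonzero limit inherits that sign. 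The $\eta$-coordinates of $w$ then realize a point of $K$.
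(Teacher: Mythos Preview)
Your open-and-closed strategy and the identification of $\prec_\beta$ as a coarsening of $\prec_\alpha$ are both correct, and the overall shape of the argument is reasonable. However, two steps have genuine gaps.

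The central one is the claim that ``$Z_{\bm{a}}^{\prec_\alpha}$ near $\bp_0$ is locally modeled on a neighborhood of $u_0$ times the convex (hence connected) cone $K$.'' Your coordinates straighten only the $k$ functions $g^{d_i}$; for $(\sigma,\tau)\in D\setminus D'$ the constraint $s_\alpha(\sigma,\tau)\sum_j \eta_j H_j^{\sigma\tau}(u,\eta)>0$ is \emph{not} linear in $\eta$, since the Hadamard coefficients depend on $(u,\eta)$. Thus the region cut out is not a product of a $u$-neighborhood with a fixed convex cone, and its connectedness does not follow from what you have written. Without connectedness you cannot conclude that $(u',t\epsilon^*)$ lies in the \emph{same} component $Y_\alpha$ of $Z_{\bm{a}}^{\prec_\alpha}$ as the $\bp_n$. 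The paper handles exactly this point via Lemma~\ref{lem:uniqueY}: hypothesis~(4) is what permits a single local homeomorphism $\phi:U\to\mathcal{B}$ carrying \emph{every} $I_{\bm{a}}(\sigma_i,\sigma_j)$ through $\bp_0$ to a genuine hyperplane simultaneously, so that $Z_{\bm{a}}^{\prec_\alpha}\cap U$ becomes a single convex cell of a hyperplane arrangement. That simultaneous straightening is strictly stronger than what implicit-function plus Hadamard gives you, and it is precisely where hypothesis~(4) earns its keep.

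Second, in your $K\neq\emptyset$ argument, the assertion that the complement of $Y_\alpha$ in $S_\alpha$ near $\bp_0$ ``has positive codimension'' is false: that complement contains the other top-dimensional strata of $S_\alpha$ adjacent to $\bp_0$ (those on which some $g^{\sigma\tau}$, $(\sigma,\tau)\in D$, has the opposite sign), and these are open in $S_\alpha$. So you cannot deduce that the tangent cone of $Y_\alpha$ at $\bp_0$ has nonempty interior on those grounds, and the subsequent choice of $w$ is unjustified. The paper sidesteps this entirely: once Lemma~\ref{lem:uniqueY} is available, it combines the decomposition $Z_{\bm{a}}^{\prec_\alpha}=\bigcup_{Y\in\mathcal{Y}_{\bm{a}}^{\prec_\alpha}}Y$ of Lemma~\ref{lem:Z_decomp} with the description of $\partial Z_{\bm{a}}^{\prec_\alpha}$ in terms of coarser orders (Lemma~\ref{lem:Zboundary}) to obtain $Y_\beta\subseteq\partial Z_{\bm{a}}^{\prec_\alpha}$, and then uses the ``at most one $Y\in\mathcal{Y}_{\bm{a}}^{\prec_\alpha}$ per neighborhood'' conclusion of Lemma~\ref{lem:uniqueY} to deduce $Y_\beta\subseteq\partial Y_\alpha$.
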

\begin{proof}
    By Lemma \ref{lem:stratum}, each $Y \in \mathcal{Y}_{\bm{a}}$ is a manifold. Let $Y_{\alpha}$ be an element of $\mathcal{Y}_{\bm{a}}$. It suffices to show that if $Y_{\beta} \neq Y_{\alpha}$ is another element of $\mathcal{Y}_{\bm{a}}$ and $Y_{\beta} \cap \partial Y_{\alpha} \neq \emptyset$, where $\partial Y_{\alpha}$ denotes the boundary of the manifold $Y_{\alpha}$, then $Y_{\beta} \subseteq \partial Y_{\alpha}$.

    By Lemma \ref{lem:constantorder_strata}, the simplex order induced by $f$ is constant on each $Y$, so there is a strict partial order $\prec_{\alpha}$ on $\K$ such that $\prec_{f_{\bm{a}}(\cdot, y)}$ is the same as $\prec_{\alpha}$ for all $y \in Y_{\alpha}$. Let $Z^{\prec_{\alpha}}_{\bm{a}}$ be defined as in \eqref{eq:Zdef}. By Lemma \ref{lem:Z_decomp}, there is a subset $\mathcal{Y}^{\prec_{\alpha}}_{\bm{a}} \subseteq \mathcal{Y}_{\bm{a}}$ such that $Y_{\alpha} \in \mathcal{Y}^{\prec_{\alpha}}_{\bm{a}}$ and $Z^{\prec_{\alpha}}_{\bm{a}}= \bigcup_{Y \in \mathcal{Y}^{\prec_{\alpha}}_{\bm{a}}} Y$. We have $\partial Z^{\prec_{\alpha}}_{\bm{a}} = \bigcup_{Y \in \mathcal{Y}^{\prec_{\alpha}}_{\bm{a}}} \partial Y$ because $\mathcal{Y}_{\bm{a}}$ is locally finite by Lemma \ref{lem:locallyfinite}. Therefore,
    \begin{equation}\label{eq:frontier_eq1}
        Y_{\beta} \cap \partial Z^{\prec_{\alpha}}_{\bm{a}} = \bigcup_{Y \in \mathcal{Y}^{\prec_{\alpha}}_{\bm{a}}} (Y_{\beta} \cap \partial Y)\,.
    \end{equation}
    By Lemmas \ref{lem:Z_decomp} and \ref{lem:Zboundary}, we have that if $Y \in \mathcal{Y}_{\bm{a}}$ intersects $\partial Z^{\prec_{\alpha}}_{\bm{a}}$, then $Y \subseteq \partial Z^{\prec_{\alpha}}_{\bm{a}}$. Therefore $Y_{\beta} \subseteq \partial Z^{\prec_{\alpha}}_{\bm{a}}$ because $Y_{\beta} \cap \partial Z^{\prec_{\alpha}}_{\bm{a}}$ contains $Y_{\beta} \cap \partial Y_{\alpha} \neq \emptyset$. Together with \eqref{eq:frontier_eq1}, this shows that 
    \begin{equation}\label{eq:frontier_eq2}
        Y_{\beta} = \bigcup_{Y \in \mathcal{Y}^{\prec_{\alpha}}_{\bm{a}}} (Y_{\beta} \cap \partial Y)\,.
    \end{equation}
    By Lemma \ref{lem:uniqueY}, every point in $\base$ has a neighborhood that intersects at most one $Y \in \mathcal{Y}^{\prec_{\alpha}}_{\bm{a}}$, so $\partial Y' \cap \partial Y = \emptyset$ for all $Y, Y' \in \mathcal{Y}^{\prec_{\alpha}}_{\bm{a}}$ such that $Y \neq Y'$. Because $Y_{\beta}$ is connected (by definition) and $Y_{\beta} \cap \partial Y_{\alpha} \neq \emptyset$, we must have that $Y_{\beta} \cap \partial Y = \emptyset$ for all $Y \in \mathcal{Y}^{\prec_{\alpha}}_{\bm{a}}$ such that $Y \neq Y_{\alpha}$. By \eqref{eq:frontier_eq2},
    \begin{equation*}
        Y_{\beta} = Y_{\beta} \cap \partial Y_{\alpha} \subseteq \partial Y_{\alpha}\,.
    \end{equation*}
\end{proof}

\begin{theorem}\label{thm:stratified}
Let $\base$ be a smooth compact $n$-dimensional manifold. For every $\bm{a} \in A$, define $\{\base^m_{\bm{a}}\}_{m=0}^n$ as in \eqref{eq:strat_def}, with $A$ defined as in \eqref{eq:Adef}. For almost every $\bm{a} \in A$, we have that $\{\base^m_{\bm{a}}\}_{m=0}^n$ is a stratification of $\base$. In each stratum $Y$, the simplex order induced by $f_{\bm{a}}$ is constant. (In other words, there is a strict partial order $\prec_Y$ on $\K$ such that $\prec_{f_{\bm{a}}(\cdot, y)}$ is the same as $\prec_Y$ for all $y \in Y$.) Consequently, the set $\{(\sigma_b, \sigma_d)\}$ of (birth, death) simplex pairs is constant in each stratum $Y$ and for any $\bp \in Y$, the persistence diagram $PD(f_\bp)$ consists of the diagonal (with infinite multiplicity) and the multiset $\{(f(\sigma_b, \bp), f(\sigma_d, \bp))\}$.
\end{theorem}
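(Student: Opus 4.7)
The plan is to assemble the theorem from the lemmas already established in \Cref{sec:generic}. First, I would intersect the full-measure sets from \Cref{lem:manifolds} and \Cref{lem:tangent_spaces} to obtain a full-measure subset $A^* \subseteq A$ on which, simultaneously:
\begin{enumerate}
    \item $M_{\bm{a}, i} \pitchfork M_{\bm{a}, j}$ for all $i \neq j$;
    \item every $S \in \overline{E_{\bm{a}}^{n-\ell}}$ is either $\emptyset$ or a smooth $\ell$-dimensional submanifold of $\base$ for every $\ell \in \{1,\ldots,n\}$;
    \item $\bigcap_{r=1}^m I_{\bm{a}}(\sigma_{i_r}, \sigma_{j_r})$ is a submanifold of $\base$ with tangent space equal to the intersection of the individual tangent spaces, for every set $\{(i_r, j_r)\}_{r=1}^m$ of index pairs.
\end{enumerate}
Fix $\bm{a} \in A^*$ for the rest of the argument.

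Next I would verify the three clauses of \Cref{def:stratification}. For clause (1), \Cref{lem:stratum} applied to hypothesis (2) above gives that $\base^m_{\bm{a}} \setminus \base^{m-1}_{\bm{a}}$ is a disjoint union of smooth $m$-dimensional manifolds, so each $m$-dimensional stratum is a smooth $m$-dimensional submanifold. I also need each $\base^m_{\bm{a}}$ to be closed; this holds because there are finitely many simplices of $\K$, hence finitely many sets $I_{\bm{a}}(\sigma_i, \sigma_j)$ and finitely many $m$-reduced intersections in $\overline{E^{n-\ell}_{\bm{a}}}$, each of which is closed as the zero set of a continuous function, and $\base^m_{\bm{a}}$ is a finite union of such closed sets. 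Clause (2) (local finiteness of $\mathcal{Y}_{\bm{a}}$) follows directly from \Cref{lem:locallyfinite}, whose four hypotheses are exactly those secured by $\bm{a} \in A^*$ together with \Cref{lem:stratum}. Clause (3) (Axiom of the Frontier) follows from \Cref{lem:frontier} under the same hypotheses.

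Finally, \Cref{lem:constantorder_strata} gives that the simplex order $\prec_{f_{\bm{a}}(\cdot,y)}$ is constant on each $Y \in \mathcal{Y}_{\bm{a}}$, and applying \Cref{lem:only_order} to any two points $y_1, y_2 \in Y$ shows that the associated (birth, death) simplex pairs $\{(\sigma_b, \sigma_d)\}$ are the same for $f_{\bm{a}}(\cdot, y_1)$ and $f_{\bm{a}}(\cdot, y_2)$. Then by the discussion of \Cref{sec:ph}, the persistence diagram $\mathrm{PD}(f_{\bm{a}}(\cdot, \bp))$ consists of the diagonal together with the multiset $\{(f_{\bm{a}}(\sigma_b, \bp), f_{\bm{a}}(\sigma_d, \bp))\}$ for any $\bp \in Y$.

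The main obstacle is bookkeeping: the lemmas already package the hard analytic content (Sard's theorem, Thom transversality, the induction on the number of simplices), so the central difficulty is simply to verify that the hypotheses of \Cref{lem:locallyfinite} and \Cref{lem:frontier} are generic and compatible, and to confirm closedness of the $\base^m_{\bm{a}}$. Once $A^*$ is defined, the rest reduces to a clean citation of the preceding lemmas.
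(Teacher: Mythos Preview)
Your proposal is correct and follows essentially the same route as the paper: the paper's proof simply cites Lemmas~\ref{lem:reduced_form}, \ref{lem:manifolds}, \ref{lem:stratum}, \ref{lem:tangent_spaces}, \ref{lem:locallyfinite}, and \ref{lem:frontier} to obtain the stratification, then invokes Lemmas~\ref{lem:constantorder_strata} and \ref{lem:only_order} for the remaining statements. Your version is slightly more explicit (you spell out the intersection of full-measure sets and the closedness of each $\base^m_{\bm{a}}$, which the paper leaves implicit), but the structure is identical.
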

\begin{proof}
By Lemmas \ref{lem:reduced_form}, \ref{lem:manifolds}, \ref{lem:stratum}, \ref{lem:tangent_spaces}, \ref{lem:locallyfinite}, and \ref{lem:frontier}, $\{\base_{\bm{a}}^m\}_{m = 0}^n$ is a stratification of $\base$ for almost every $\bm{a} \in A$. By Lemma \ref{lem:constantorder_strata}, the simplex order induced by $f_{\bm{a}}$ is constant in each stratum $Y \in \mathcal{Y}_{\bm{a}}$ whenever $\{\base_{\bm{a}}^m\}_{m = 0}^n$ is a stratification of $\base$. The last statement of Theorem \ref{thm:stratified} follows from Lemma \ref{lem:only_order}.
\end{proof}

%%%

\section{Monodromy in PD Bundles}\label{sec:monodromy}

\begin{definition}[Local section]
Let $(E, \base, \pi)$ be a PD bundle. A \emph{local section} is a continuous map $s: U \to E$, where $U$ is an open set in $\base$ and $\pi \circ s(\bp) = \bp$ for all $\bp \in U$.
\end{definition}
\noindent For example, consider a vineyard, in which $\base$ is an interval $I$ in $\mathbb{R}$. Let $(t_0, T)$ be an open interval in $I$. A local section in the vineyard is a map $s:(t_0, T) \to E$ that parameterizes an open subset of one of the vines (a curve in $\mathbb{R}^3$).
\begin{definition}[Global section]
Let $(E, \base, \pi)$ be a PD bundle. A \emph{global section} is a continuous map $s: \base \to E$ with $\pi \circ s(\bp) = \bp$ for all $\bp \in \base$. In particular, a \emph{nontrivial global section} is a global section $s: \base \to E$ such that there exists a $\bp_* \in \base$ for which $s(\bp_*)$ is not on the diagonal of $PD(f_{\bp_*})$.
\end{definition}
\noindent In a vineyard, every local section can be extended to a global section. In other words, we can trace out how the persistence of a single homology class changes over $\base = [t_0, t_1] \subseteq \mathbb{R}$, so there are individual ``vines'' in the vineyard. We will show that local sections of a PD bundle cannot necessarily be extended to global sections. Consequently, a PD bundle does not necessarily have a decomposition of the form \eqref{eq:vine_decomp}; if it does, then each $\gamma$ is a global section.

\begin{proposition}\label{prop:monodromy}
There is a PD bundle $(E, \base, \pi)$ for which no 
 nontrivial global sections exist.
\end{proposition}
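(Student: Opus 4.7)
The plan is to construct an explicit fibered filtration exhibiting monodromy. I would take $\base$ to be a two-dimensional space with nontrivial fundamental group — for instance, an annulus — and choose a small simplicial complex $\K$ having at least two (birth, death) simplex pairs over each generic base point. The singular set on which two off-diagonal PD points coincide would have codimension two in $\base$, so loops around it encircle a well-defined singularity and can carry nontrivial monodromy.

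The design of the filtration is guided by \Cref{rmk:nonunique}: the vineyard bijection $\phi^{\idx_{f_0},\, \idx_{f_1}}$ defined in \Cref{sec:updates} depends on the chosen sequence of elementary transpositions, not only on the endpoints. I would therefore select filtration values $f(\sigma_i, \bp)$ so that two paths between the same pair of base points, differing by a loop around a singularity, induce distinct sequences of adjacent simplex-order transpositions — and hence different bijections on the set of (birth, death) pairs. By \Cref{thm:stratified} (or, in a piecewise-linear variant, \Cref{prop:polyhedrons_constant}), after a small generic perturbation the base is stratified into open regions on which the simplex order is constant, and each crossing of a codimension-one stratum corresponds to a single adjacent transposition handled by the update rule of \Cref{sec:updates}. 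The geometric goal is to arrange matters so that the net bijection associated with a small loop around a chosen singularity is a genuine transposition of two (birth, death) pairs.

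With such a fibered filtration in hand, the nonexistence of a nontrivial global section follows by a path-lifting argument. Suppose $s : \base \to E$ were a global section with $s(\bp_0) = z_0$ off-diagonal, corresponding to a pair $(\sigma_b, \sigma_d)$. Continuity of $s$ together with \Cref{thm:stratified} forces $s|_Y$ on each stratum $Y$ to have the form $\bp \mapsto (f(\sigma_b^Y, \bp), f(\sigma_d^Y, \bp))$ for a stratum-wise tracked pair, and continuity across stratum boundaries forces that tracked pair to update by the vineyard bijection $\phi$. Tracing $s$ around the constructed loop returns to $\bp_0$ with a tracked pair distinct from $(\sigma_b, \sigma_d)$, so $s(\bp_0) \neq z_0$, a contradiction.

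The main obstacle will be the explicit combinatorial design of the filtration so that the composed bijection around the loop is a genuine nontrivial transposition rather than the identity; most naive choices yield the identity, since a transposition performed and then undone cancels. The design must exploit the non-commutativity of the vineyard update rule — one must order the simplex-value coincidences along the two homotopy-distinct paths so that the net composition of updates differs between the paths, giving a concrete realization of the path-dependence of $\phi$ highlighted by \Cref{rmk:nonunique}. Once one such configuration is exhibited, the lifting-obstruction argument above completes the proof.
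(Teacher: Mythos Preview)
Your plan is essentially the paper's approach: build a small explicit fibered filtration over a two-dimensional base with a single codimension-two singularity, arrange the simplex-order crossings so that the monodromy of the (birth, death) pairs around a loop is a nontrivial transposition, and then derive a contradiction from any putative nontrivial global section by path-lifting. The paper carries this out concretely with $\base = \mathbb{R}^2$, a complex on four vertices with two $1$-simplices $a,b$ and two $2$-simplices $c,d$, and conditions forcing $a,b$ to swap along the $x$-axis and $c,d$ to swap along the $y$-axis; the singularity sits at the origin and the loop is $S^1$.

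Two points in your outline need tightening before it becomes a proof. First, your lifting argument silently assumes that a nontrivial section never touches the diagonal; without this, the section could ``escape'' the monodromy by passing through the diagonal at some base point. The paper handles this by building the example so that $f(\sigma_b,\bp) \neq f(\sigma_d,\bp)$ for every (birth, death) pair and every $\bp$, which forces any section touching the diagonal anywhere to be trivial everywhere. You should impose a similar separation condition in your construction. Second, your claim that ``continuity across stratum boundaries forces that tracked pair to update by the vineyard bijection $\phi$'' is not automatic: $\phi$ is one compatible bijection, but at a boundary where several simplex values coincide the continuous matching need not be unique. What makes the argument go through in the paper is that along the chosen loop each stratum boundary involves exactly one adjacent transposition, so the continuous matching is forced and coincides with $\phi$. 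Your construction must guarantee this genericity along the loop (which is exactly what your invocation of \Cref{prop:polyhedrons_constant} or \Cref{thm:stratified} is meant to provide, but it should be stated as a requirement on the loop, not on all of $\base$).
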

\begin{proof}
The proof is constructive. Let $\K$ be the simplicial complex in Figure \ref{fig:inconsistent_K}, which has vertices $0$, $1$, $2$, and $3$. Let $a$ be the edge with vertices $(0, 1)$, let $b$ be the edge with vertices $(0, 2)$, let $c$ be the triangle with vertices $(0, 1, 2)$, and let $d$ be the triangle with vertices $(0, 2, 3)$. 

Let $f: \K \times \mathbb{R}^2 \to \mathbb{R}$ be a continuous fibered filtration function that satisfies the following conditions:
\begin{align*}
    f(c, (x, y)), f(d, (x, y)) &>  f(a, (x, y)), f(b, (x, y)) > 0 \qquad \text{for all } (x, y) \in \mathbb{R}^2\,, \\
    f(a, (x, y)) &> f(b, (x, y))\,, \qquad y > 0\,, \\
    f(a, (x, y)) &< f(b, (x, y))\,, \qquad y < 0\,, \\
    f(c, (x, y)) &> f(d, (x, y))\,, \qquad x > 0\,, \\
    f(c, (x, y)) &< f(d, (x, y))\,, \qquad x < 0\,, \\
    f(\sigma, (x, y)) &= 0\,, \qquad \text{for all other } \sigma, \text{ for all } x, y\,. 
\end{align*}
The conditions on the fibered filtration function $f$ are illustrated in Figure \ref{fig:filtfunction}.
\begin{figure}
    \centering
    \subfloat[]{\includegraphics[width=.2\textwidth]{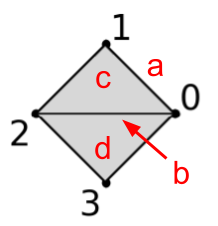}    \label{fig:inconsistent_K}}
    \hspace{5mm}
    \subfloat[]{\includegraphics[width = .35\textwidth]{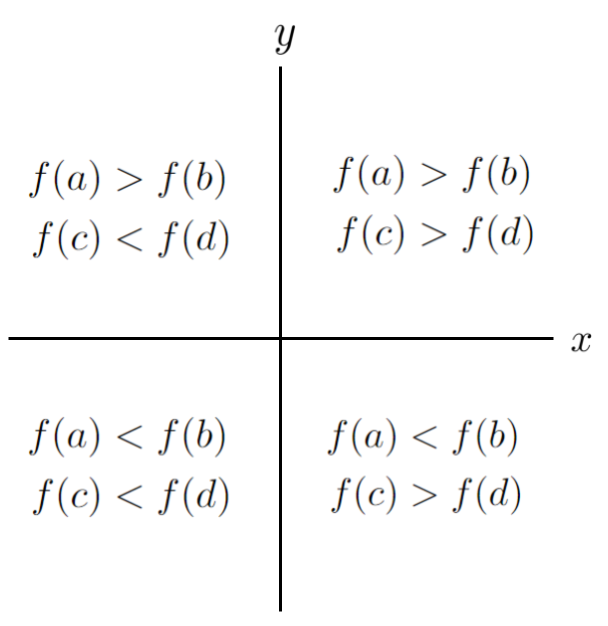}\label{fig:filtfunction}}
    \hspace{5mm}
    \subfloat[]{\includegraphics[width=.35\textwidth]{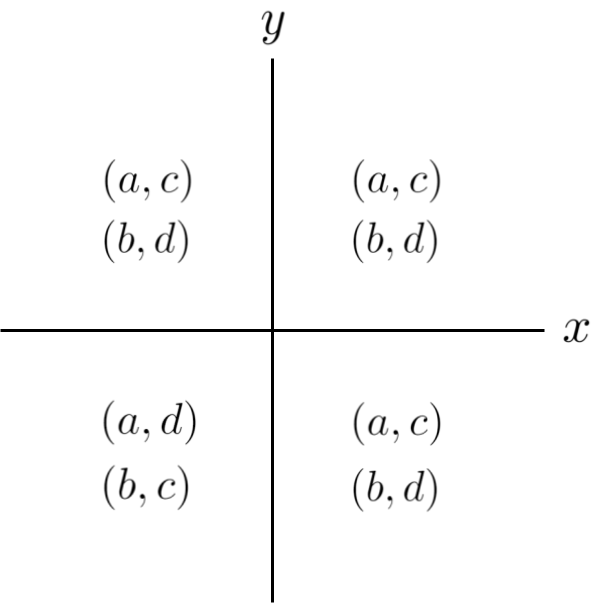}\label{fig:inconsistent_pairings}}
    \caption{(A) The \scom\ $\K$ that is defined in the proof of Proposition \ref{prop:monodromy}. (B) The conditions on the fibered filtration $f: \K \times \mathbb{R}^2 \to \mathbb{R}$ that is defined in the proof of Proposition \ref{prop:monodromy}. (C) The (birth, death) simplex pairs in each quadrant for the 1D PH.}
    \label{fig:example_def}
\end{figure}
These conditions imply that simplices $a$ and $b$ swap their order along the $x$-axis and the simplices $c$ and $d$ swap their order along the $y$-axis.

In Figure \ref{fig:inconsistent_pairings}, we list the (birth, death) simplex pairs for the 1D PH in each quadrant. In quadrants $1$, $2$, and $4$, the simplex pairs are $(a, c)$ and $(b, d)$. In quadrant $3$, the simplex pairs are $(a, d)$ and $(b, c)$.

Let $(E, \mathbb{R}^2, \pi)$ be the corresponding PD bundle, where $E = \{ ((x, y), z) \mid (x, y) \in \mathbb{R}^2, z \in PD_1(f(\cdot, (x, y)))\}$ is the total space and $\pi$ is the projection to $\mathbb{R}^2$. We will show that $(E, \mathbb{R}^2, \pi)$ has no 
 nontrivial global sections.

If $s: \base \to E$ is a global section and $s(\bp_*)$ is on the diagonal of $\pdgm(f(\cdot, \bp_*))$ for some $\bp_* \in \base$, then $s$ is a trivial section because $f(\sigma_b, \bp) \neq f(\sigma_d, \bp)$ for all $\bp$ for any (birth, death) simplex pairs $(\sigma_b, \sigma_d)$ at $\bp$. Therefore, if $s : \base \to E$ is a nontrivial global section, $s(\bp)$ is not on the diagonal of $\pdgm(f(\cdot, \bp))$ for any $\bp \in \base$.

Suppose that $\gamma: [0, 1] \to E$ is a continuous path such that $\gamma(u)$ is not on the diagonal of $\pdgm(f(\cdot, (x, y)))$ for any $(x, y) \in \mathbb{R}^2$ and such that
\begin{equation}\label{eq:base_proj}
    \pi \circ \gamma(u) = \theta(u) := (\cos (2\pi u + \pi/4), \sin (2 \pi u + \pi/4)) \in S^1\,.
\end{equation}
That is, $\pi \circ \gamma$ is a parameterization of $S^1$ that starts in the first quadrant of $\mathbb{R}^2$ at $\bp_0 = (\sqrt{2}/2, \sqrt{2}/2)$. The path $\gamma$ is determined uniquely by its initial condition $\gamma(0)$. The simplex pairs in the first quadrant are $(a, c)$ and $(b, d)$, so $\gamma(0)$ equals either $(\bp_0, (f(a, \bp_0), f(c, \bp_0)))$ or $(\bp_0, (f(b, \bp_0), f(d, \bp_0)))$. In Figure \ref{fig:monodromy}, we illustrate the two possibilities for the path $\gamma$. If $\gamma(0) = (\bp_0, (f(a, \bp_0), f(c, \bp_0)))$, then $\gamma(1) = (\bp_0, (f(b,\bp_0), f(d, \bp_0)))$; if $\gamma(0) = (\bp_0, (f(b, \bp_0), f(d, \bp_0)))$, then $\gamma(1) = (\bp_0, (f(a, \bp_0), f(c, \bp_0)))$. In either case, $\gamma(0) \neq \gamma(1)$.

\begin{figure}
    \centering
    \includegraphics[width = \textwidth]{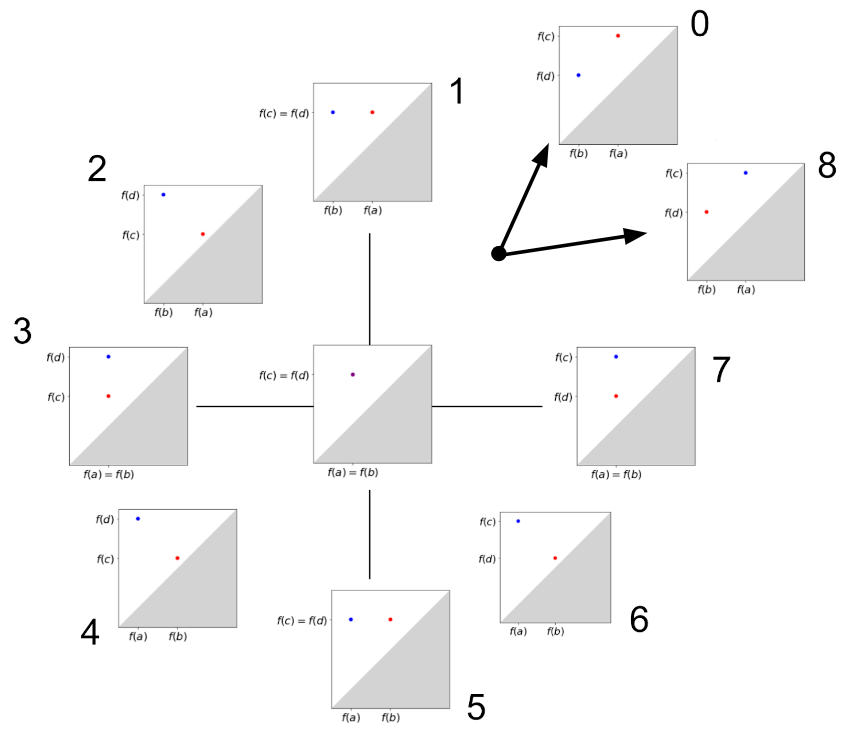}
    \caption{A visualization of the two choices for the path $\gamma:[0, 1] \to E$ in the proof of Proposition \ref{prop:monodromy}, where $E$ is the total space of the PD bundle. We show $10$ fibers of the PD bundle for various points $\bp \in \mathbb{R}^2$. The first nine PDs (labeled $0$ through $8$) are PDs for points $\bp \in S^1$; the $k$th PD is the PD at $t_k = \theta(u_k)$, where $u_k = k/8$ and $\theta(u)$ is the parameterization of $S^1$ given by \eqref{eq:base_proj}. Note that $\theta(0) = (\sqrt{2}/2, \sqrt{2}/2) \in S^1$. The two choices for the path $\gamma(u)$, which depend only on the choice of $\gamma(u_0)$, are shown in red and blue, respectively. For each $k$, the red (respectively, blue) dot in the $k$th PD is equal to $\gamma(u_k)$ when $\gamma(u_0)$ is the red (respectively, blue) point in the $0$th PD. Observe that $\gamma(u_0) \neq \gamma(u_8)$ even though $\bp_0 = \bp_8$. The unlabeled PD at the origin is the PD for the origin in $\mathbb{R}^2$, at which there is a ``singularity.''}
    \label{fig:monodromy}
\end{figure}

To obtain a contradiction, suppose that there were a nontrivial global section $s: \mathbb{R}^2 \to E$. Let $\gamma: [0, 1] \to E$ be the path $\gamma = s \circ \theta$, where $\theta$ is the parameterization of $S^1$ defined in \eqref{eq:base_proj}. Then $\gamma(0) \neq \gamma(1)$ because $\gamma$ is a path satisfying \eqref{eq:base_proj}, but $\gamma(0) = s(\bp_0) = \gamma(1)$.
\end{proof}
Note that we will use the fibered filtration $f: \K \times \mathbb{R}^2 \to \mathbb{R}$ that was constructed in Proposition \ref{prop:monodromy} as a running example throughout Section \ref{sec:cellularsheaf}.
\begin{remark}\label{rmk:S1}
Even when $\dim(\base) = 1$, it is not guaranteed that a nontrivial global section exists. To see this, consider the 1D PH of the fibered filtration function above restricted to $S^1 \subseteq \mathbb{R}^2$. In this example, $\dim(\base) = 1$ and a 
nontrivial global section does not exist.
\end{remark}
\begin{remark}\label{rmk:singularity}
In the example of \Cref{prop:monodromy}, it was the ``singularity'' (the point $(0, 0) \in \mathbb{R}^2$ at which the PD had a point of multiplicity greater than one) that prevented the existence of a nontrivial global section. Restricting the PD bundle to $\base' := \R^2 \setminus \{(0, 0)\}$ yields a true fiber bundle; each fiber is homeomorphic to the disjoint union of a line (the diagonal) and two points (the off-diagonal points). It is well known that fiber bundles over contractible spaces are trivial (i.e., the total space is homeomorphic to the product of the base and a fiber.) However, $\base'$ is not contractible, so our PD bundle restricted to $\base'$ is not guaranteed to be trivial. Indeed, what we showed in \Cref{prop:monodromy} is that it is not. By comparison to a vineyard,
\begin{enumerate}
    \item Singularities do not occur for generic fibered filtrations $f: \K \times \R \to \R$. A singularity occurs at $\bp_* \in \base$ when there are two (birth, death) simplex pairs $(\sigma_b^1, \sigma_d^1)$, $(\sigma_b^2, \sigma_d^2)$ at $\bp_*$ such that $\bp_* \in I(\sigma_b^1, \sigma_b^2) \cap I(\sigma_d^1, \sigma_d^2)$. When $\dim(\base) = 1$, the intersection $I(\sigma_b^1, \sigma_b^2) \cap I(\sigma_d^1, \sigma_d^2)$ is empty in the generic case, so singularities do not typically exist when $\dim(\base) = 1$.
    \item Even when singularities do occur in a vineyard, there should not be monodromy in the vineyard. As in the example above, we can remove the singularities from $\R$ to obtain a disjoint union of intervals $\base_1, \ldots, \base_m$ such that when we restrict the vineyard to a $\base_i$, we have a fiber bundle. Intervals in $\R$ are contractible, so these fiber bundles must be trivial. By continuity, we can glue together the fiber bundles over each $\base_i$ to see that our PD bundle cannot have monodromy.
\end{enumerate}
\end{remark}

\section{A Compatible Cellular Sheaf}
For a given fibered filtration function that induces a stratification of $\base$ as in Theorem \ref{thm:stratified}, we construct a compatible cellular sheaf. We discuss a motivating example in Section \ref{sec:comb}, and give the definition in Section \ref{sec:cellularsheaf}.

\subsection{A motivating example}\label{sec:comb}
Again consider the example in the proof of Proposition \ref{prop:monodromy}, and also again consider the path $\gamma:[0, 1] \to E$ that is determined uniquely by the choice of
\begin{equation*}
\gamma(0) \in \{(\bp_0, (f(a, \bp_0), f(c, \bp_0))), (\bp_0, (f(b, \bp_0), f(d, \bp_0)))\}\,,
\end{equation*}
where $\bp_0 = (\sqrt{2}/2, \sqrt{2}/2)$. The two possibilities for the path $\gamma$ are illustrated in Figure \ref{fig:monodromy}. For example, if $\gamma(0) = (\bp_0, (f(a, \bp_0), f(c, \bp_0)))$, then
\begin{equation*}
    \gamma(u) = \begin{cases}
        \Big(\theta(u), (f(a, \theta(u)), f(c, \theta(u)))\Big)\,, & u \in [0, 1/8] \\
        \Big(\theta(u), (f(a, \theta(u)), f(c, \theta(u)))\Big)\,, & u \in [1/8, 3/8] \\
        \Big(\theta(u), (f(b, \theta(u)), f(c, \theta(u)))\Big)\,, & u \in [3/8, 5/8] \\
        \Big(\theta(u), (f(b, \theta(u)), f(d, \theta(u)))\Big)\,, & u \in [5/8, 7/8] \\
        \Big(\theta(u), (f(b, \theta(u)), f(d, \theta(u)))\Big)\,, & u \in [7/8, 1]\,, \\
    \end{cases}
\end{equation*}
where $\theta(u)$ is the parameterization of $S^1$ given by \eqref{eq:base_proj}. As we move through the quadrants of $\mathbb{R}^2$, the point in the PD that represents the pair $(a, c)$ in the first quadrant becomes the point that represents the pair $(a, c)$ in the second quadrant, which becomes the point that represents the pair $(b, c)$ in the third quadrant, which becomes the point that represents the pair $(b, d)$ in the fourth quadrant, which becomes the point that represents the pair $(b, d)$ in the first quadrant. One can do a similar analysis for the case in which $\gamma(0) = (\bp_0, (f(b, \bp_0), f(d, \bp_0)))$.

This analysis yields a bijection of the (birth, death) simplex pairs for any pair of adjacent quadrants. We illustrate the bijections in Figure \ref{fig:pair_matchings}. The bijection between the simplex pairs in a given quadrant and one of its adjacent quadrants is the same as the bijection defined by the update rule of Cohen-Steiner et al. \cite{vineyards} for updating the simplex pairs in a vineyard. A combinatorial perspective on Proposition \ref{prop:monodromy} is that there is no consistent way of choosing a simplex pair in each quadrant such that if $(\sigma_b, \sigma_d)$ is the (birth, death) simplex pair chosen for a given quadrant and $(\tau_b, \tau_d)$ is the (birth, death) simplex pair chosen for an adjacent quadrant, then $(\sigma_b, \sigma_d)$ and $(\tau_b, \tau_d)$ are matched in the bijection between the two quadrants. This is because if we choose an initial simplex pair in one of the quadrants and then walk in a circle through the other quadrants, then the simplex pair at which we finish is different from the initial simplex pair. For example, if we start at $(a, c)$ in the first quadrant, then we finish at $(b, d)$ when we return to the first quadrant, and vice versa. This is a discrete way of illustrating the non-existence of a nontrivial global section.
\begin{figure}
    \centering
    \includegraphics[width=.5\textwidth]{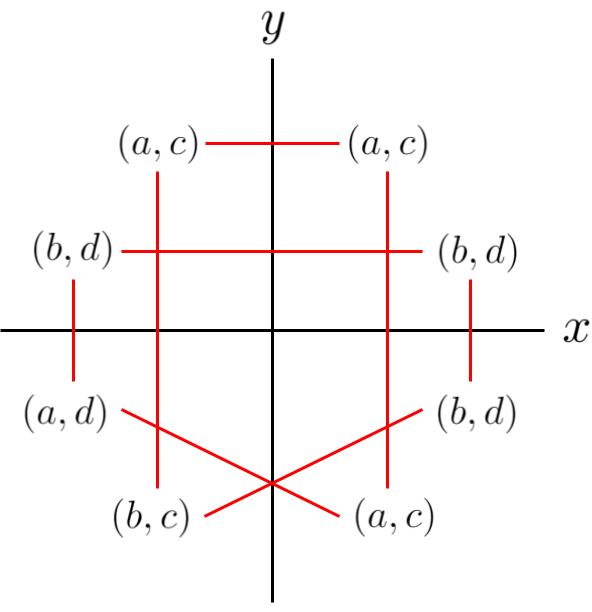}
    \caption{The (birth, death) simplex pairs in each quadrant for the 1D PH of the fibered filtration function in the proof of Proposition \ref{prop:monodromy} (see also Figure \ref{fig:example_def}). For each pair of adjacent quadrants, there is a bijection between their sets of simplex pairs; this bijection is equal to the bijection given by the update rule of Cohen-Steiner et al. \cite{vineyards}. The red lines connect simplex pairs that are in bijection with each other.}
    \label{fig:pair_matchings}
\end{figure}

\subsection{Definition of a compatible cellular sheaf}\label{sec:cellularsheaf}
I generalize the discussion in Section \ref{sec:comb} to fibered filtration functions of the form $f:\K \times B \to \mathbb{R}$ that have a stratification (see Definition \ref{def:stratification}) of $B$ such that in each stratum $Y$, the simplex order that is induced by $f$ is constant. (In other words, there is a strict partial order $\prec_Y$ on $\K$ such that $\prec_{f(\cdot, y)}$ is the same as $\prec_Y$ for all $y \in Y$.) Theorem \ref{thm:stratified} guarantees that such a stratification exists for generic fibered filtration functions, and Proposition \ref{prop:polyhedrons_constant} guarantees that such a stratification exists for all piecewise-linear fibered filtration functions. We denote the set of strata by $\mathcal{Y} = \{Y_{\alpha}\}_{\alpha \in J}$ for some index set $J$.

\begin{definition}\label{def:sheaf}
    Suppose that $\mathcal{F}$ is a {\bf Set}-valued cellular sheaf whose cell complex, stalks, and morphisms are of the following form:

    \begin{enumerate}
        \item {\bf The cell complex:} The cell complex on which $\mathcal{F}$ is constructed is the graph $G$ such that there is a vertex $v_{\alpha}$ for each stratum $Y_{\alpha} \in \mathcal{Y}$ and an edge $e_{\beta, \alpha} = (v_{\beta}, v_{\alpha})$ if $Y_{\beta} \in \mathcal{Y}$ is a face of $Y_{\alpha}$. The $0$-cells of the cell complex are the vertices of $G$ and the $1$-cells are the edges of $G$.

    \vspace{3mm}
    
        \item {\bf The stalks:} Let $S_{\alpha}$ denote the set of (birth, death) simplex pairs for a stratum $Y_{\alpha}$. The stalk at a $0$-cell $v_{\alpha} \in G$ is $\mathcal{F}(v_{\alpha}) := S_{\alpha}$. For a $1$-cell $e_{\beta, \alpha} \in G$, where $Y_{\beta}$ is a face of $Y_{\alpha}$, the stalk at $e_{\beta, \alpha}$ is $\mathcal{F}(e_{\beta, \alpha}) := S_{\alpha}$.

\vspace{3mm}

        \item {\bf The morphisms:} If $Y_{\beta} \in \mathcal{Y}$ is a face of $Y_{\alpha} \in \mathcal{Y}$, then the morphism $\mathcal{F}_{v_{\beta} \leq e_{\beta, \alpha}}: \mathcal{F}(v_{\alpha}) \to \mathcal{F}(e_{\beta, \alpha})$ is the identity map and the morphism $\mathcal{F}_{v_{\beta} \leq e_{\beta, \alpha}}: \mathcal{F}(v_{\beta}) \to \mathcal{F}(e_{\beta, \alpha})$ is
        \begin{equation*}
            \mathcal{F}_{v_{\beta} \leq e_{\beta, \alpha}} := \phi^{\idx_{\beta},\, \idx_{\alpha}}\,,
        \end{equation*}
where $\phi^{\idx_{\beta},\, \idx_{\alpha}}$ is of the form in \eqref{eq:bijection_def} and $\idx_{\alpha}: \K \to \{1, \ldots, N\}$ and $\idx_{\beta}: \K \to \{1, \ldots, N\}$ are the simplex indexings (recall Definition \ref{def:spx_indexing}) on $Y_{\alpha}$ and $Y_{\beta}$, respectively. (Recall that by \Cref{lem:constantorder_strata}, the simplex order induced by $f$ is constant within $Y_{\alpha}$ and within $Y_{\beta}$.)
    \end{enumerate}
Then the cellular sheaf $\mathcal{F}$ is a \emph{compatible cellular sheaf} for the fibered filtration function $f: \K \times \base \to \mathbb{R}$.
\end{definition}

It is not guaranteed that there is a \emph{unique} compatible cellular sheaf for a given fibered filtration function $f$. Although the cell complex (the graph $G$) is determined uniquely by $f$, the stalks and morphisms are not. Recall from Definition \ref{def:spx_indexing} that the simplex indexing that is induced by $f$ may depend on an intrinsic indexing $\sigma_1, \ldots, \sigma_N$ of the simplices in $\K$. (The intrinsic indexing breaks ties when two simplices have the same filtration value.) For a stratum $Y_{\alpha}$ such that $f(\sigma, y) = f(\tau, y)$ for all $y \in Y_{\alpha}$ for some pair $(\sigma, \tau)$ of simplices, the simplex indexing $\idx_{f(\cdot, Y_{\alpha})}$ depends on the intrinsic indexing, so $S_{\alpha}$ may not be determined uniquely by $f$. If $S_{\alpha}$ is not determined uniquely by $f$, then for any face $Y_{\beta}$ of $Y_{\alpha}$, the stalks $\mathcal{F}(v_{\alpha})$ and $\mathcal{F}(e_{\beta, \alpha})$ are not determined uniquely by $f$. As discussed in Remark \ref{rmk:nonunique}, a bijection $\phi^{\idx_{\beta}, \idx_{\alpha}}$ of the form in \eqref{eq:bijection_def} is not determined uniquely by $f$ if $\idx_{\beta}$ and $\idx_{\alpha}$ differ by more than the transposition of two consecutive simplices. Therefore, the morphism $\mathcal{F}_{v_\beta \leq e_{\beta, \alpha}}$ is not necessarily determined uniquely by $f$.

However, many aspects of the stalks and morphisms \emph{are} determined uniquely by $f$. Suppose that $Y_{\beta} \in \mathcal{Y}$ is a face of $Y_{\alpha} \in \mathcal{Y}$. If $f(\sigma, y) \neq f(\tau, y)$ for all $y$ in $Y_{\alpha}$ and all simplices $\sigma \neq \tau$, then the simplex indexing $\idx_{f(\cdot, Y_{\alpha})}$ is determined uniquely by $f$, so the stalks $\mathcal{F}(v_{\alpha})$ and $\mathcal{F}(e_{\beta, \alpha})$ are determined uniquely by $f$. Theorem \ref{thm:stratified} guarantees that this is the generic case when $Y_{\alpha}$ is an $n$-dimensional stratum (where $n = \dim(\base)$). There are also conditions under which a morphism is determined uniquely by $f$. The morphism $\mathcal{F}_{v_{\alpha} \leq e_{\beta, \alpha}}: S_\alpha \to S_\alpha$ must be the identity map. The morphism $\mathcal{F}_{v_{\alpha} \leq e_{\beta, \alpha}} := \phi^{\idx_{\beta}, \idx_{\alpha}}$ 
is determined uniquely by $f$ when $\idx_{\beta}$ and $\idx_{\alpha}$ differ by the transposition of two consecutive simplices. Theorem \ref{thm:stratified} guarantees that this is the generic case when $Y_{\beta}$ is a ``top-dimensional'' face of $Y_{\alpha}$ (i.e., when $\dim(Y_{\beta}) = \dim(Y_{\alpha}) - 1$). 

\begin{example}\label{ex:sheaf}
    Again consider a fibered filtration function $f: \K \times \mathbb{R}^2 \to \mathbb{R}$ of the form defined in \Cref{prop:monodromy}, with $\K$ defined as in Figure \ref{fig:inconsistent_K} with $N = 11$ simplices. We construct a compatible cellular sheaf $\mathcal{F}$ as follows.
    \begin{enumerate}
        \item {\bf The cell complex:} The strata are the open quadrants $Q_1, \ldots, Q_4$, the open half-axes $A_{12}, A_{23}, A_{34}, A_{14}$ with $A_{ij} = (\partial Q_i \cap \partial Q_j ) \setminus \{\bm{0}\}$, and the point $\bm{0} \in \mathbb{R}^2$. The associated graph $G$ (the cell complex for $\mathcal{F}$) has a vertex $v_{Q_i}$ for the $i$th quadrant, a vertex $v_{A_{ij}}$ for the $(i, j)$th half-axis, and a vertex $v_{\bm{0}}$ for the point $\bm{0}$. The graph $G$ has edges $(v_{A_{ij}}, v_{Q_i})$ and  $(v_{A_{ij}}, v_{Q_j})$ for each half-axis $A_{ij}$, and it has an edge $(v_{\bm{0}}, v)$ for every vertex $v \in G$ such that $v \neq v_{\bm{0}}$.

\vspace{3mm}

        \item {\bf The stalks: } We index the simplices of $\K$ such that $\sigma_8 = a$, $\sigma_9 = b$, $\sigma_{10} = c$, and $\sigma_{11} = d$, where $a$, $b$, $c$, $d$ are the simplices defined in Figure \ref{fig:inconsistent_K}. The stalk at $v_{Q_1}$ is $S_{Q_1} = \{(a, c), (b, d)\}$. The vertices $v_{Q_2}$ and $v_{A_{12}}$ have the same stalk $\{(a, c), (b, d)\}$; the vertices $v_{Q_3}$, $v_{A_{23}}$, and $v_{34}$ have the same stalk $\{(a, d), (b, c)\}$; and the vertices $v_{Q_4}$ and $v_{A_{14}}$ have the same stalk $\{(b, d), (a, c)\}$. The stalks at the edges of $G$ are determined by the stalks at the vertices. In this example, the stalks at the vertices or edges that correspond to 2D strata are determined uniquely by $f$, but the stalks at the vertices and edges that correspond to 0D or 1D strata depend on our choice of intrinsic indexing.

\vspace{3mm}

        \item {\bf The morphisms:}  There are only three distinct nonidentity morphisms. The first two are
\begin{align*}
    \mathcal{F}_{v_{A_{23}} \leq e_{(A_{23}, Q_2 )}},\, \mathcal{F}_{v_{\bm{0}} \leq e_{(\bm{0}, Q_2)}}: \{(a, c), (b, d)\} &\to \{(a, d), (b, c)\} \\
    (a, c) &\mapsto (b, c) \\
    (b, d) &\mapsto (a, d)\,, \\
    \mathcal{F}_{v_{A_{34}} \leq e_{(A_{34}, Q_4)}},\, \mathcal{F}_{v_{\bm{0}} \leq e_{(\bm{0}, Q_4)}}: \{(a, d), (b, c)\} &\to \{(a, c), (b, d)\} \\
    (a, d) &\mapsto (a, c) \\
    (b, c) &\mapsto (b, d)\,.
\end{align*}
The third distinct nonidentity morphism is a map
\begin{equation*}
    \mathcal{F}_{v_{\bm{0}} \leq e_{(\bm{0}, Q_1)}} :\{(a, d), (b, c)\} \to \{(a, c), (b, d)\}\,.
\end{equation*}
As we move from $\bm{0}$ to $Q_1$, we swap the simplex indices of $a$ and $b$ and we also swap the simplex indices of $c$ and $d$ (in the simplex indexing induced by $f$). The morphism is not canonical because the bijection $\phi^{\idx_{\bm{0}}, \idx_{Q_1}}$ depends on whether one first swaps $a$ and $b$ or one first swaps $c$ and $d$. Therefore, we may define either
\begin{align*}
    \mathcal{F}_{v_{\bm{0}} \leq e_{(\bm{0}, Q_1)}}: (a, d) &\mapsto (a, c)\,,\\
    (b, c) &\mapsto (b, d)
\end{align*}
or
\begin{align*}
    \mathcal{F}_{v_{\bm{0}} \leq e_{(\bm{0}, Q_1)}}: (a, d) &\mapsto (b, d)\,,\\
    (b, c) &\mapsto (a, c)\,.
\end{align*}
Both choices results in a compatible cellular sheaf.
    \end{enumerate}
\end{example}

\subsection{Sections of the cellular sheaf}
Let $\mathcal{F}$ be any compatible cellular sheaf for a fibered filtration $f: \K \times \base \to \mathbb{R}$. We write
\begin{equation}\label{eq:Fbd}
    \mathcal{F}_{v_{\beta} \leq e_{\beta, \alpha}} = \Big(\mathcal{F}^b_{v_{\beta} \leq e_{\beta, \alpha}},\, \mathcal{F}^d_{v_{\beta} \leq e_{\beta, \alpha}}\Big)\,,
\end{equation}
where $\mathcal{F}^b_{v_{\beta} \leq e_{\beta, \alpha}}: S_{\beta} \to S_{\alpha}$ maps a pair $(\sigma_b, \sigma_d) \in S_{\beta}$ to the birth simplex of $\mathcal{F}_{v_{\beta} \leq e_{\beta, \alpha}}((\sigma_b, \sigma_d))$ and  $\mathcal{F}^d_{v_{\beta} \leq e_{\beta, \alpha}}: S_{\beta} \to S_{\alpha}$ maps $(\sigma_b, \sigma_d) \in S_{\beta}$ to the death simplex of $\mathcal{F}_{v_{\beta} \leq e_{\beta, \alpha}}((\sigma_b, \sigma_d))$. (Recall that $S_{\alpha}$, $S_{\beta}$ are the stalks at the vertices $v_{\alpha}$, $v_{\beta}$ that are associated with the strata $Y_{\alpha}$, $Y_{\beta}$.)

In this subsection, we show that one can view sections of $\mathcal{F}$ as sections of the associated PD bundle. 

\begin{lemma}\label{lem:f_edge}
Let $Y_{\beta}$ be a face of $Y_{\alpha}$. Assume that $f: \K \times \base \to \R$ is continuous (i.e., $f(\sigma, \cdot)$ is continuous for all simplices $\sigma$ in $\K$). Then for any point $\bp \in Y_{\beta}$ and any pair $(\sigma_b, \sigma_d)$ in $\mathcal{F}(v_{\beta})$, we have
\begin{equation}\label{eq:f_edge}
    \big(f(\sigma_b, \bp), f(\sigma_d, \bp)\Big) = \Big(f(\mathcal{F}^b_{v_{\beta} \leq e_{\beta, \alpha}}((\sigma_b, \sigma_d)), \bp)\,, f(\mathcal{F}^d_{v_{\beta} \leq e_{\beta, \alpha}}((\sigma_b, \sigma_d)), \bp)\Big)\,,
\end{equation}
where $\mathcal{F}^b$ and $\mathcal{F}^d$ are defined as in \eqref{eq:Fbd}.
\end{lemma}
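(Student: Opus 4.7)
The plan is to show that the bijection $\phi^{\idx_\beta,\, \idx_\alpha}$ only reshuffles pairs among those whose birth and death slots already agree slot-by-slot under $f(\cdot, \bp)$, so that evaluating $f(\cdot, \bp)$ on a pair and on its image produces the same pair of real numbers. I would carry this out in three steps: a structural comparison of $\idx_\beta$ and $\idx_\alpha$, a careful choice of the transposition sequence composing to $\phi^{\idx_\beta,\, \idx_\alpha}$, and a direct induction on that sequence.

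First I would establish that the permutation $\idx_\alpha \circ \idx_\beta^{-1}$ only reshuffles simplices within ``blocks'' of the equivalence relation $\sigma \sim \tau \iff f(\sigma, \bp) = f(\tau, \bp)$ for $\bp \in Y_\beta$. By Lemma \ref{lem:constantorder_strata}, the relation $\sim$ does not depend on the choice of $\bp \in Y_\beta$. Now suppose $f(\sigma, \bp) < f(\tau, \bp)$. Then $\idx_\beta(\sigma) < \idx_\beta(\tau)$ by definition of $\idx_\beta$. Because $Y_\beta$ is a face of $Y_\alpha$ (so $Y_\beta \subseteq \overline{Y_\alpha}$), every neighborhood of $\bp$ contains some $\bp_\alpha \in Y_\alpha$; continuity of $f$ then gives $f(\sigma, \bp_\alpha) < f(\tau, \bp_\alpha)$, and Lemma \ref{lem:constantorder_strata} propagates this strict inequality to all of $Y_\alpha$, forcing $\idx_\alpha(\sigma) < \idx_\alpha(\tau)$. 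The case $f(\sigma, \bp) > f(\tau, \bp)$ is symmetric, so simplices in different blocks appear in the same relative order in both indexings.

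Next I would use this block structure to choose a ``block-preserving'' decomposition of $\phi^{\idx_\beta,\, \idx_\alpha}$ in the sense of \eqref{eq:bijection_def}. Since each block occupies the same consecutive range of positions in both $\idx_\beta$ and $\idx_\alpha$, the overall permutation factors as a product of permutations, one per block, each acting on a consecutive range of indices. A standard bubble-sort argument applied block-by-block yields a sequence $\zeta_0 = \idx_\beta, \zeta_1, \ldots, \zeta_m = \idx_\alpha$ in which every adjacent transposition $\sigma \leftrightarrow \tau$ lies within a single block. Any two face-related simplices in the same block must appear in the same relative order in both $\idx_\beta$ and $\idx_\alpha$ (since every compatible simplex indexing respects face relations), so the sequence can be arranged to never swap a face-related pair, ensuring that every intermediate $\zeta_i$ is itself a valid compatible simplex indexing. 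I would take this sequence as the one used in \eqref{eq:bijection_def}.

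Finally I would induct on $i$. At each transposition step the swapped simplices $\sigma, \tau$ lie in a common block, so $f(\sigma, \bp) = f(\tau, \bp)$. The update rule recalled in Section \ref{sec:updates} shows that $\phi^{\zeta_i,\, \zeta_{i+1}}$ acts as the identity on every pair disjoint from $\{\sigma, \tau\}$, and on each pair containing $\sigma$ or $\tau$ it either acts as the identity or substitutes $\sigma$ for $\tau$ (or the reverse) in a birth/death slot. In the nontrivial case the simplex in that slot is renamed but its value under $f(\cdot, \bp)$ is unchanged, since $f(\sigma, \bp) = f(\tau, \bp)$. Composing across $i$ via \eqref{eq:bijection_def} therefore preserves $f(\cdot, \bp)$ slot-by-slot, which is precisely \eqref{eq:f_edge}. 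The main obstacle worth flagging is the nonuniqueness noted in Remark \ref{rmk:nonunique}: different decompositions of the same permutation can yield different bijections, so one must insist that Definition \ref{def:sheaf} uses a block-preserving decomposition of the sort constructed above; for that choice, \eqref{eq:f_edge} becomes an automatic consequence of the induction.
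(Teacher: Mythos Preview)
Your argument is essentially the same as the paper's: both reduce to the single-transposition case, observe that the swapped simplices must have equal $f$-value on $Y_\beta$ (by continuity and the face relation $Y_\beta \subseteq \overline{Y_\alpha}$), and then compose. The paper compresses this into a few lines, whereas you spell out the block structure and explicitly construct a block-preserving decomposition; your added care is warranted, since the paper's sentence ``Equation \eqref{eq:f_edge} holds in general because $\mathcal{F}_{v_\beta \leq e_{\beta,\alpha}}$ is defined as the composition of such maps'' tacitly assumes exactly the kind of decomposition you build, without saying so. Your closing caveat about Remark~\ref{rmk:nonunique} is on point: the lemma as stated applies to \emph{any} compatible cellular sheaf, but both your proof and the paper's really only cover those sheaves built from a block-preserving transposition sequence, so you are right to flag that Definition~\ref{def:sheaf} should be read with such a choice in mind.
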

\begin{proof}
If the simplex orders in $Y_{\alpha}$ and $Y_{\beta}$ differ only by a transposition of simplices $(\sigma, \tau)$ with consecutive indices in the orderings, then we must have $f(\sigma, \bp) = f(\tau, \bp)$ for all $\bp \in Y_{\beta}$ because $f$ is continuous and $Y_{\beta} \subseteq \overline{Y_{\alpha}}$. By definition, $\mathcal{F}_{v_{\beta} \leq e_{\beta, \alpha}}$ is either the identity map or the map that swaps $\sigma$ and $\tau$ in the pairs that contain them. In either case, \eqref{eq:f_edge} holds because $f(\sigma, \bp) = f(\tau, \bp)$ for all $\bp \in Y_{\beta}$. Equation \eqref{eq:f_edge} holds in general because $\mathcal{F}_{v_{\beta} \leq e_{\beta, \alpha}}$ is defined as the composition of such maps.
\end{proof}

The following proposition says that a global section of a compatible cellular sheaf $\mathcal{F}$ corresponds to a global section of the PD bundle.
\begin{proposition}\label{prop:discrete_secs}
Let $z_0$ be a non-diagonal point in $PD_\homdim(f_{\bp_0})$ for some $\bp_0 \in \base$, let $(\sigma_b, \sigma_d)$ be the (birth, death) simplex pair such that $z_0 = (f(\sigma_b, \bp_0), f(\sigma_d, \bp_0))$, and let $Y_0$ be the stratum that contains $\bp_0$. Suppose that $\mathcal{F}$ is a compatible cellular sheaf, and let $v_0$ be the vertex in the graph $G$ (see Definition \ref{def:sheaf}) that is associated with $Y_0$. If there is a global section $\overline{s}$ of the cellular sheaf $\mathcal{F}$ such that $\overline{s}(v_0) = (\sigma_b, \sigma_d)$, then there is a global section $s$ of the PD bundle such that $s(\bp_0) = z_0$.
\end{proposition}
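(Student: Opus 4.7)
The plan is to construct the section $s$ directly from $\bar{s}$ stratum by stratum, and then verify that the compatibility conditions imposed by the sheaf structure are exactly what is needed to glue the pieces into a continuous map. Concretely, for each $\bp \in \base$, let $Y_{\alpha(\bp)}$ be the stratum containing $\bp$, let $v_{\alpha(\bp)}$ be the associated vertex of the graph $G$, and write $\bar{s}(v_{\alpha(\bp)}) = (\sigma_b^{\alpha(\bp)}, \sigma_d^{\alpha(\bp)})$. I would then define
\begin{equation*}
    s(\bp) := \Big(\bp,\, \big(f(\sigma_b^{\alpha(\bp)}, \bp),\, f(\sigma_d^{\alpha(\bp)}, \bp)\big)\Big)\,.
\end{equation*}
That $s(\bp_0) = z_0$ is immediate from the hypothesis that $\bar{s}(v_0) = (\sigma_b, \sigma_d)$, and that $s(\bp) \in \pi^{-1}(\bp) \subseteq E$ for every $\bp$ follows from Theorem \ref{thm:stratified} (or, in the piecewise-linear case, Proposition \ref{prop:polyhedrons_constant}) together with Lemma \ref{lem:only_order}: the (birth, death) simplex pairs are constant on each stratum $Y_\alpha$, and evaluating $f(\cdot, \bp)$ on any such pair produces a point in $PD_\homdim(f_\bp)$.

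The only nontrivial step is continuity of $s$, which is the main obstacle. Inside a single stratum $Y_\alpha$, continuity is immediate because the pair $(\sigma_b^\alpha, \sigma_d^\alpha)$ is fixed and $f(\sigma, \cdot)$ is continuous for every simplex $\sigma$. The real content is continuity across strata. Suppose $\bp_n \to \bp$ with $\bp \in Y_\beta$. By local finiteness of the stratification (Definition \ref{def:stratification}(2)), a neighborhood of $\bp$ meets only finitely many strata, so after passing to subsequences I may assume that all $\bp_n$ lie in a single stratum $Y_\alpha$. Because $\bp \in Y_\beta \cap \overline{Y_\alpha}$, the Axiom of the Frontier forces $Y_\beta \subseteq \overline{Y_\alpha}$, i.e.\ $Y_\beta$ is a face of $Y_\alpha$, so the edge $e_{\beta,\alpha}$ exists in $G$.

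The sheaf-compatibility of $\bar{s}$ at this edge gives
\begin{equation*}
    \bar{s}(e_{\beta,\alpha}) \;=\; \mathcal{F}_{v_\alpha \leq e_{\beta,\alpha}}(\bar{s}(v_\alpha)) \;=\; \bar{s}(v_\alpha) \;=\; \mathcal{F}_{v_\beta \leq e_{\beta,\alpha}}(\bar{s}(v_\beta))\,,
\end{equation*}
because $\mathcal{F}_{v_\alpha \leq e_{\beta,\alpha}}$ is the identity by Definition \ref{def:sheaf}. Applying Lemma \ref{lem:f_edge} to the pair $(\sigma_b^\beta, \sigma_d^\beta) := \bar{s}(v_\beta)$ and the point $\bp \in Y_\beta$ yields
\begin{equation*}
    \big(f(\sigma_b^\beta, \bp), f(\sigma_d^\beta, \bp)\big) \;=\; \big(f(\sigma_b^\alpha, \bp), f(\sigma_d^\alpha, \bp)\big)\,.
\end{equation*}
Combining this identity with continuity of each $f(\sigma, \cdot)$ gives
\begin{equation*}
    s(\bp_n) = \Big(\bp_n, \big(f(\sigma_b^\alpha, \bp_n), f(\sigma_d^\alpha, \bp_n)\big)\Big) \;\longrightarrow\; \Big(\bp, \big(f(\sigma_b^\beta, \bp), f(\sigma_d^\beta, \bp)\big)\Big) = s(\bp)\,,
\end{equation*}
which is the desired continuity. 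Finally, since $s(\bp_0) = z_0$ is not on the diagonal and $s$ is continuous with $\pi \circ s = \mathrm{id}_\base$, it is a (nontrivial) global section of the PD bundle. The hardest part of the argument, as anticipated, is handling the cross-stratum limits, and the compatible-sheaf axioms together with Lemma \ref{lem:f_edge} are precisely the tools that make the gluing work.
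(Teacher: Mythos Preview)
Your proof is correct and follows essentially the same approach as the paper: define $s$ stratum by stratum via $\bar{s}$, then verify continuity across strata using the sheaf compatibility condition together with Lemma~\ref{lem:f_edge}. The only cosmetic difference is that you argue continuity sequentially (using local finiteness and the Axiom of the Frontier to reduce to a single adjacent stratum), whereas the paper checks directly that $s\vert_{\overline{Y_\alpha}}$ agrees on each face $Y_\beta$; both arguments are unwinding the same identity $\big(f(\sigma_b^\beta,\bp),f(\sigma_d^\beta,\bp)\big)=\big(f(\sigma_b^\alpha,\bp),f(\sigma_d^\alpha,\bp)\big)$ for $\bp\in Y_\beta$.
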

\begin{proof}
Let $\overline{s}$ be a global section of the cellular sheaf $\mathcal{F}$ such that $\overline{s}(v_0) = (\sigma_b, \sigma_d)$. For every stratum $Y_{\alpha}$, we write 
\begin{equation*}
    \overline{s}(v_{\alpha}) = (\overline{s}_b(v_{\alpha}), \overline{s}_d(v_{\alpha}))\,,
\end{equation*}
where $\overline{s}_b(v_{\alpha})$ is the birth simplex of $\overline{s}(v_{\alpha})$ and $\overline{s}_d(v_{\alpha})$ is the death simplex of $\overline{s}(v_{\alpha})$. Let $Y:\base \to \{Y_{\alpha}\}$ be the function that maps $\bp \in \base$ to the unique stratum $Y_{\alpha}$ that contains it.

We define $s: \base \to E$ to be the function
\begin{equation*}
    s(\bp) := \Big(\bp, f(\overline{s}_b(v_{Y(\bp)}), \bp), f(\overline{s}_d(v_{Y(\bp)}), \bp)\Big)\,.
\end{equation*}
To show that $s: B \to E$ is a global section of the PD bundle, it remains to show that it is continuous. The function $s\vert_{Y_{\alpha}}$ is continuous for all strata $Y_{\alpha}$ because $f(\sigma, \cdot)$ is continuous for all simplices $\sigma \in \K$. Therefore, it suffices to show that $s\vert_{\overline{Y_{\alpha}}}$ is continuous on each face $Y_{\beta}$ of $Y_{\alpha}$. Because $\overline{s}$ is a section of the cellular sheaf, 
\begin{equation*}
    \overline{s}(v_{\alpha}) = \mathcal{F}_{v_{\beta} \leq e_{\beta, \alpha}}(\overline{s}(v_{\beta}))\,.
\end{equation*}
By Lemma \ref{lem:f_edge},
\begin{equation*}
    \Big(f(\overline{s}_b(v_{\beta}), \bp), f(\overline{s}_d(v_{\beta}), \bp)\Big) = \Big( f(\mathcal{F}^b_{v_{\beta} \leq e_{\beta, \alpha}}(\overline{s}(v_{\beta})), \bp), f(\mathcal{F}^d_{v_{\beta} \leq e_{\beta, \alpha}}(\overline{s}(v_{\beta})), \bp)\Big)
\end{equation*}
for all points $\bp \in Y_{\beta}$. Therefore,
\begin{equation*}
     \Big(f(\overline{s}_b(v_{\beta}), \bp), f(\overline{s}_d(v_{\beta}), \bp)\Big) =  \Big(f(\overline{s}_b(v_{\alpha}), \bp), f(\overline{s}_d(v_{\alpha}), \bp)\Big)
\end{equation*}
for all $\bp \in Y_{\beta}$, which completes the proof.
\end{proof}

%%%%

\section{Conclusions}\label{sec:conclusion}

\subsection{Summary} In this \difference{chapter}{paper}, I introduced the concept of a persistence diagram (PD) bundle, a framework that can be used to study the persistent homology of a fibered filtration function (i.e.,  set of filtrations parameterized by an arbitrary ``base space'' $\base$). Special cases of PD bundles include vineyards \cite{vineyards}, the persistent homology transform (PHT) \cite{pht}, the fibered barcode of a multiparameter persistence module \cite{fibered}, and the barcode-decorated merge tree \cite{dmt}.

In Theorem \ref{thm:stratified}, I proved that if $\base$ is a smooth compact manifold, then for generic fibered filtrations, $\base$ is stratified so that the simplex order is constant within each stratum. When such a stratification exists, the PD bundle is determined by the PDs at a locally finite (or finite, if $\base$ is compact) subset of points in $\base$. In Proposition \ref{prop:polyhedrons_constant}, I showed that every piecewise-linear PD bundle has such a stratification into polyhedra. This polyhedral stratification is utilized in \difference{Chapter \ref{ch:pd_bundle_alg} (\cite{pd_bundle_alg})}{\cite{pd_bundle_alg}} in an algorithm for computing piecewise-linear PD bundles.

I showed that, unlike vineyards, which PD bundles generalize, not every local section of a PD bundle can be extended to a global section (see Proposition \ref{prop:monodromy}). The implication is that PD bundles do not necessarily decompose into ``vines'' in the way that vineyards do (see \eqref{eq:vine_decomp}).

Lastly, I introduced a cellular sheaf that is compatible with a given PD bundle. In Proposition \ref{prop:discrete_secs}, I proved that one can determine whether a local section can be extended to a global section by determining whether or not there is an associated global section of a compatible cellular sheaf. A compatible cellular sheaf is a discrete mathematical data structure for summarizing the data in a PD bundle.

\subsection{Discussion}
For a given fibered filtration function $f$ with a stratification as in Theorem \ref{thm:stratified}, I defined a compatible cellular sheaf $\mathcal{F}$ over a graph $G$. It is tempting to instead define an associated cellular sheaf directly on the stratification of $\base$. In particular, when $f$ is piecewise linear, the strata are polyhedra, so the stratification is guaranteed to be a cellular decomposition. One could certainly define stalks $\mathcal{F}(Y_{\alpha})$ and functions $\mathcal{F}(Y_{\beta}) \to \mathcal{F}(Y_{\beta})$ in the same way as in Definition \ref{def:sheaf}. The problem is that $\mathcal{F}$ would not necessarily satisfy the composition condition (see \eqref{eq:composition}). For instance, this issue occurs in Example \ref{ex:sheaf} for the same reason that the morphism $\mathcal{F}_{v_{\bm{0}} \leq  e_{(\bm{0}, Q_1)}}$ in the example is not canonical (see the discussion in Example \ref{ex:sheaf}).

Additionally, I note that one could have defined a compatible cellular cosheaf rather than a sheaf.

\subsection{Future research} I conclude with some questions and proposals for future work:
\begin{itemize}
    \item What are the conditions under which a PD bundle must have a decomposition of the form \eqref{eq:vine_decomp}?
    \vspace{3mm}
    
    \noindent I conjecture that
    if $\base \setminus \base^*$ is contractible, where $\base^*$ is the set of singularities (i.e., points $\bp_* \in \base$ at which there is a point in \pdgm$(f_{\bp_*})$ with multiplicity greater than \difference{$1$}{one}), then there is a decomposition of the form \eqref{eq:vine_decomp}. See the discussion in Remark \ref{rmk:singularity}.
    
    \vspace{3mm}
    
    \item What algebraic or computational methods can we use to analyze global sections and to compute obstructions to the existence of global sections?
    
    \vspace{3mm}
    
    \noindent It may help to consider the cellular-sheaf perspective from Section \ref{sec:cellularsheaf}, which turns the question into a discrete problem that one can study computationally. One can also generalize Turner's vineyard-module perspective \cite{turner2023}.

    \vspace{3mm}

    \item PHT is a PD bundle over the base space $\base = S^n$. Are there constructible sets $M \subseteq \mathbb{R}^{n+1}$ for which the associated PHT exhibits monodromy? What is the geometric interpretation (in terms of $M$)?

    \vspace{3mm}

    \item Arya et al. \cite{pht_sheaf} showed that the PHT of a constructible set $M$ can be calculated by ``gluing together'' the PHT of smaller, simpler subsets of $M$. Can one generalize these results to all PD bundles?

    \vspace{3mm}
    
    \item When are PD bundles ``stable''?
    \vspace{3mm}
    
    \noindent PD bundles are ``fiberwise stable'' in the sense that if $f_1, f_2: \K \times \base \to \R$ are two fibered filtrations, then the bottleneck distance between $\pdgm(f_1(\cdot, \bp))$ and $\pdgm(f_2(\cdot, \bp))$ is bounded above by $\norm{f_1 - f_2}_{\infty}$ for all $\bp \in \base$ \cite{PD_stability}. However, this does not guarantee that the global structure of a PD bundle is stable. For example, it is well known that the structure of a vineyard is not stable (see Appendix \ref{sec:vineyard_unstable} for an example). However, vineyards are stable for generic $1$-parameter filtrations; if none of the vines intersect (which is the generic case), then sufficiently small perturbations of the filtration result in small perturbations of each vine in the vine decomposition (see \eqref{eq:vine_decomp}). I expect that an analogous result holds for generic fibered filtration functions over any base space $\base$.

    \vspace{3mm}
    
    \item It will also be interesting to study real-world applications of PD bundles, such as the examples that were mentioned in Section \ref{sec:examples}.
\end{itemize}

%%%%%%%%%%%%%%%%%

\section*{Acknowledgements}
I am very grateful for discussions with Andrew Blumberg, which led to the investigation of monodromy in PD bundles.
I also thank Ryan Grady and Karthik Viswanathan for helpful discussions.

\appendix

\section*{Appendix}
\renewcommand{\thesection}{A}
\subsection{Vineyard instability}\label{sec:vineyard_unstable}

For any $\epsilon > 0$, we construct two $1$-parameter filtration functions $f^+_{\epsilon}$, $f^-_{\epsilon}$ that are $\epsilon$-perturbations of each other (that is, $\vert f^+_{\epsilon}(\sigma, \bp) - f^-_{\epsilon}(\sigma, \bp) \vert < \epsilon$ for all simplices $\sigma \in \K$ and all points $\bp \in \base$) but such that for any bijection between the vines in the respective vineyards, not all of the matched vines are close to each other. In fact, we can define $f^+_{\epsilon}$ and $f^-_{\epsilon}$ so that their vines are arbitrarily far apart.

We construct our example by restricting the filtration function from Section \ref{sec:monodromy} to certain paths through $\mathbb{R}^2$. Let $\K$ and $f: \K \times \mathbb{R}^2 \to \mathbb{R}$ be defined as in the proof of \Cref{prop:monodromy}. (See Figure \ref{fig:filtfunction}.) Because $f$ is continuous, we have that for any $\epsilon > 0$, there is a $\delta > 0$ such that $\vert f(\sigma, \bp) - f(\sigma, \bm{0})\vert < \epsilon/2$ when $\norm{\bp} < \sqrt{2}\delta$ and $\sigma$ is any simplex in $\K$. We define the paths
\begin{align*}
    \gamma^+_{\epsilon}(t) &:= \begin{cases}
        (t, t) \,, & |t| \geq \delta \\
        (-\delta, \delta + 2t)\,, & -\delta < t < 0\\\
        (-\delta + 2t, \delta)\,, & 0 \leq t < \delta\,,
    \end{cases}\\
    \gamma_{\epsilon}^-(t) &:= \begin{cases}
        (t, t)\,, & |t| \geq \delta \\
        (\delta + 2t, - \delta)\,,  & -\delta < t < 0\\
        (\delta, -\delta + 2t)\,, & 0 \leq t < \delta\,.
    \end{cases}
\end{align*}
See \Cref{fig:instability_paths} for a plot of the paths $\gamma_{\epsilon}^{\pm}(t)$.
\begin{figure}
    \centering
    \includegraphics[width = .45\textwidth]{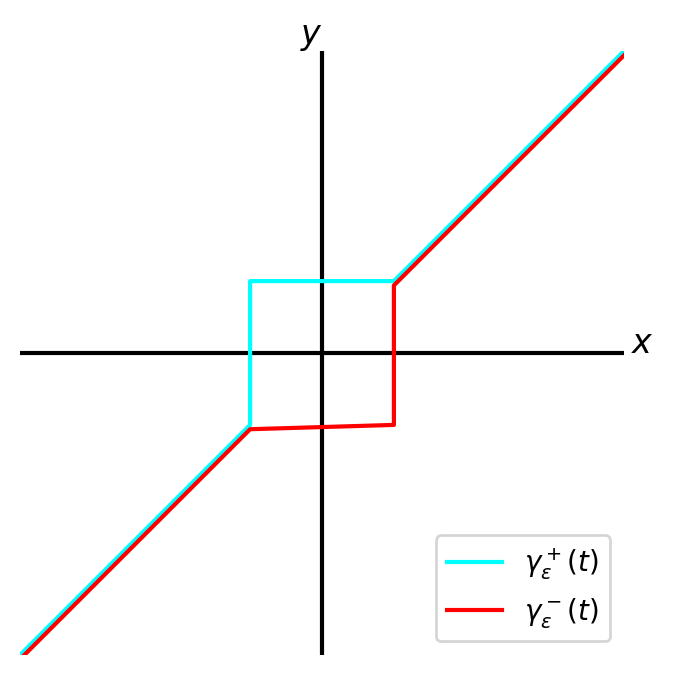}
    \caption{The paths $\gamma_{\epsilon}^+(t)$ and $\gamma_{\epsilon}^-(t)$.}
    \label{fig:instability_paths}
\end{figure}

Let $f^{\pm}_{\epsilon} : \K \times \mathbb{R} \to \mathbb{R}$ be the $1$-parameter filtration functions defined by $f^{\pm}(\sigma, t) := f(\sigma, \gamma_{\epsilon}^{\pm}(t))$. By construction, the filtrations $f^+_{\epsilon}$ and $f^-_{\epsilon}$ are $\epsilon$-perturbations of each other. 

Let $V^+$ and $V^-$ be the vineyards for $f^+_{\epsilon}$ and $f^-_{\epsilon}$, respectively, for the $1$st degree PH. The vineyards $V^{\pm}$ each have two vines $v_1^{\pm}$, $v_2^{\pm}$, which are paths $v_i^{\pm}: \mathbb{R} \to \mathbb{R}^3$. The vines are
\begin{align*}
    v^+_1(t) &= \begin{cases}
        (f(a, \gamma^+(t)), f(d, \gamma^+(t)))\,, & t \leq - \delta/2 \\
        f(b, \gamma^+(t)), f(d, \gamma^+(t)))\,, & t > -\delta/2\,,
    \end{cases}\\
   v^+_2(t) &= \begin{cases}
        (f(b, \gamma^+(t)), f(c, \gamma^+(t)))\,, & t \leq - \delta/2 \\
        f(a, \gamma^+(t)), f(c, \gamma^+(t)))\,, & t > -\delta/2\,,
    \end{cases}\\
    v^-_1(t) &= \begin{cases}
        (f(a, \gamma^-(t)), f(d, \gamma^-(t)))\,, & t \leq - \delta/2 \\
        f(a, \gamma^-(t)), f(c, \gamma^-(t)))\,, & t > -\delta/2\,,
        \end{cases}\\
    v^-_2(t) &= \begin{cases}
        (f(b, \gamma^-(t)), f(c, \gamma^-(t)))\,, & t \leq - \delta/2 \\
        f(b, \gamma^-(t)), f(d, \gamma^-(t)))\,, & t > -\delta/2\,.
        \end{cases}
\end{align*}

There is no bijection $\phi : \{1, 2\} \to \{1, 2\}$ such that $v_1^+$ and $v_2^+$ are close to $v_{\phi(1)}^-$ and $v_{\phi(2)}^-$, respectively. This is because
\begin{align*}
    \norm{v_1^+(t) - v_1^-(t)}^2 &= |f(b, (t, t)) - f(a, (t, t))|^2 + |f(d, (t, t))) - f(c, (t, t))|^2 \,, \qquad t > -\delta/2\,,\\
    \norm{v_1^+(t) - v_2^-(t)}^2 &= |f(b, (t, t)) - f(a, (t, t))|^2 + |f(d, (t, t))) - f(c, (t, t))|^2 \,, \qquad t \leq - \delta/2\,,\\
    \norm{v_2^+(t) - v_2^-(t)}^2 &= |f(b, (t, t)) - f(a, (t, t))|^2 + |f(d, (t, t))) - f(c, (t, t))|^2 \,, \qquad t > -\delta/2\,,\\
    \norm{v_2^+(t) - v_1^-(t)}^2 &= |f(b, (t, t)) - f(a, (t, t))|^2 + |f(d, (t, t))) - f(c, (t, t))|^2 \,, \qquad t \leq - \delta/2
\end{align*}
and we can define $f$ so that $|f(b, (t, t)) - f(a, (t, t))|$ and $|f(d, (t, t)) - f(c, (t, t))|$ are arbitrarily large for $t \neq 0$.

\subsection{Technical Details of Section \ref{sec:partition}}\label{sec:bundle_details}
All notation is defined as in Section \ref{sec:partition}. 

The first series of lemmas is used to prove Lemma \ref{lem:tangent_spaces}, which shows that for almost every $\bm{a} \in A$, the tangent space of the intersection of sets $I_{\bm{a}}(\sigma_{i_r}, \sigma_{j_r})$ is equal to the intersection of their tangent spaces.

\begin{lemma}\label{lem:manifold_transverse}
    For almost every $\bm{a} \in A$, we have
    \begin{equation*}
        T_p\Big(\bigcap_{j \in J} M_{\bm{a}, j}\Big) = \bigcap_{j \in J} T_p(M_{\bm{a}, j})
    \end{equation*}
    for all $J \subseteq \{1, \ldots, N\}$ and all $p \in \bigcap_{j \in J} M_{\bm{a}, j}$.
\end{lemma}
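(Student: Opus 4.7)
The plan is to express each $M_{\bm{a}, j}$ as the zero set of a submersion on $B \times \R$ and then show that, for almost every $\bm{a} \in A$, the joint map whose zero set equals $\bigcap_{j \in J} M_{\bm{a}, j}$ is itself a submersion at every point of that zero set. Setting $F_{\bm{a}, j}(\bp, y) := y - f(\sigma_j, \bp) - a_j$, so that $M_{\bm{a}, j} = F_{\bm{a}, j}^{-1}(0)$, and $F_{\bm{a}, J} := (F_{\bm{a}, j})_{j \in J}$ for $J \subseteq \{1, \ldots, N\}$, the inclusion $T_p(\bigcap_{j \in J} M_{\bm{a}, j}) \subseteq \bigcap_{j \in J} T_p M_{\bm{a}, j}$ is immediate. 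For the reverse inclusion, the regular value theorem yields that at any $p \in F_{\bm{a}, J}^{-1}(0)$ where $dF_{\bm{a}, J}(p)$ is surjective, the intersection $\bigcap_{j \in J} M_{\bm{a}, j}$ is a smooth submanifold near $p$ with $T_p(\bigcap_{j \in J} M_{\bm{a}, j}) = \bigcap_{j \in J} \ker dF_{\bm{a}, j}(p) = \bigcap_{j \in J} T_p M_{\bm{a}, j}$. So the task reduces to ensuring this surjectivity at every point of $F_{\bm{a}, J}^{-1}(0)$ for almost every $\bm{a}$.

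Next I would compute $dF_{\bm{a}, J}$ explicitly in order to strip the $\bm{a}$-dependence out of the surjectivity criterion. At $(\bp, y)$ one has $dF_{\bm{a}, J}(\bp, y)(v, w) = (w - df(\sigma_j, \cdot)(\bp)(v))_{j \in J}$, and fixing any $j_0 \in J$, a short linear-algebra check shows that this map is surjective onto $\R^{|J|}$ if and only if the $|J|-1$ covectors $\{dg^{j_0 j}(\bp)\}_{j \in J \setminus \{j_0\}}$ are linearly independent in $T_\bp^* B$, where $g^{ij}$ is as in the proof of \Cref{lem:manifolds}. Equivalently, $\bp$ must be a regular point of the smooth map $G^J := (g^{j_0 j})_{j \in J \setminus \{j_0\}} : B \to \R^{|J|-1}$. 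But any $(\bp, y) \in \bigcap_{j \in J} M_{\bm{a}, j}$ satisfies $G^J(\bp) = (a_j - a_{j_0})_{j \in J \setminus \{j_0\}}$, so it suffices that this particular value be a regular value of $G^J$.

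Sard's theorem then finishes the argument. For each fixed $J$, the set of critical values of $G^J$ has measure zero in $\R^{|J|-1}$; pulling back along the linear surjection $\bm{a} \mapsto (a_j - a_{j_0})_{j \in J \setminus \{j_0\}}$ (via Fubini along its kernel) shows that the set of $\bm{a} \in A$ for which this value fails to be regular has measure zero in $A$. Intersecting over the finitely many $J \subseteq \{1, \ldots, N\}$ with $|J| \geq 2$ produces a full-measure subset of $A$ on which the desired tangent-space equality holds simultaneously for every $J$ and every $p \in \bigcap_{j \in J} M_{\bm{a}, j}$; the cases $|J| \leq 1$ are trivial, and when $|J| > n + 1$ the image of $G^J$ is itself of measure zero in $\R^{|J|-1}$, so the preimage is generically empty and the statement holds vacuously.

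The main subtlety I anticipate is the decoupling of $\bm{a}$ from the regularity criterion: the manifolds $M_{\bm{a}, j}$ genuinely depend on $\bm{a}$, but because $a_j$ enters $f_{\bm{a}}$ only as an additive constant, the differentials $dg^{j_0 j}(\bp)$ are independent of $\bm{a}$, and $\bm{a}$ appears only in the specific regular value we must select. Keeping this bookkeeping straight is what makes the Sard reduction clean and avoids having to reinvoke the heavier Thom transversality machinery that was used in the proof of \Cref{lem:manifolds}.
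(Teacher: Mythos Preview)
Your argument is correct and takes a genuinely different, more elementary route than the paper. The paper's proof orders the elements of $J$ and inducts, invoking at each stage the principle that ``transverse intersections are generic'' to obtain $M_{\bm{a}, j_i} \pitchfork (M_{\bm{a}, j_1} \cap \cdots \cap M_{\bm{a}, j_{i-1}})$ for almost every $\bm{a}$; once each successive intersection is transverse, the tangent-space identity follows by the standard fact that the tangent space of a transverse intersection is the intersection of tangent spaces. By contrast, you bypass both the induction and the appeal to parametric transversality by writing $\bigcap_{j \in J} M_{\bm{a}, j}$ as the zero set of the single map $F_{\bm{a}, J}$ and reducing the regular-value condition to the linear independence of the covectors $dg^{j_0 j}(\bp)$, which is exactly the regular-value condition for the $\bm{a}$-independent map $G^J$ at the specific value $(a_j - a_{j_0})_{j \neq j_0}$. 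One application of Sard's theorem to $G^J$, followed by a Fubini pullback along the linear surjection $\bm{a} \mapsto (a_j - a_{j_0})_j$, then handles all of $J$ at once. Your approach is more explicit about \emph{why} genericity holds in this particular setting (the key being that the $a_j$ enter only additively, so the differential criterion is $\bm{a}$-independent), whereas the paper's approach is shorter but leans on a black-box transversality principle whose hypotheses it does not spell out. Both are valid; yours is arguably cleaner for this specific lemma.
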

\begin{proof}
    Because there are finitely many subsets of $\{1, \ldots, N\}$, it suffices to show that for a given $J \subseteq \{1, \ldots, N\}$, we have $T_p\Big(\bigcap_{j \in J} M_{\bm{a}, j}\Big) = \bigcap_{j \in J} T_p(M_{\bm{a}, j})$ for all $p \in \bigcap_{j \in J} M_{\bm{a}, j}$ for almost every $\bm{a} \in A$. Let $\{j_i\}_{i=1}^k$ be the elements of $J$, where $j_i < j_{i+1}$ for all $i$. Because transverse intersections are generic, we have $M_{\bm{a}, j_i} \pitchfork ( M_{\bm{a}, j_1} \cap \cdots \cap M_{\bm{a}, j_{i-1}})$ for every $i$ for almost every $\bm{a} \in A$. For such an $\bm{a} \in A$, we have 
    \begin{equation*}
        T_p\Big(\bigcap_{j \in J} M_{\bm{a}, j}\Big) = T_p(M_{\bm{a}, j_k}) \cap T_p\Big( \bigcap_{i=1}^{k-1} M_{\bm{a}, j_i} \Big)
    \end{equation*}
    because $M_{\bm{a}, j_k} \pitchfork ( M_{\bm{a}, j_1} \cap \cdots \cap M_{\bm{a}, j_{k-1}})$. Therefore,
    \begin{equation*}
        T_p\Big(\bigcap_{j \in J} M_{\bm{a}, j}\Big) = \bigcap_{i=1}^k T_p(M_{\bm{a}, j_i})
    \end{equation*}
    by induction on $i$.
\end{proof}

\begin{lemma}\label{lem:open_cover}
    Let $\bm{a} \in A$, and let $\{ (i_r, j_r) \}_{r = 1}^m$ be a set of index pairs such that $\{i_r, j_r\} \neq \{i_s, j_s\}$ if $r \neq s$. If $\base$ is a compact manifold, then there is a finite open cover $\{U_k\}_{k=1}^K$ and a disjoint partition $\bigcup_\ell J_{\ell, k} = \{1, \ldots, m\}$ for each $k$ such that
    \begin{equation*}
        \{i_r, j_r \mid r \in J_{\ell_1, k}\} \cap \{i_r, j_r \mid r \in J_{\ell_2, k}\} = \emptyset
    \end{equation*}
    if $\ell_1 \neq \ell_2$ and 
    \begin{equation*}
        \pi\Big(\bigcap_{r \in J_{\ell, k}} (M_{\bm{a}, i_r} \cap M_{\bm{a}, j_r})\Big) \cap U_k = \bigcap_{r \in J_{\ell, k}} I_{\bm{a}}(\sigma_{i_r}, \sigma_{j_r}) \cap U_k
    \end{equation*}
    for all $\ell$, where $\pi$ is the projection $\pi: B \times \mathbb{R} \to B$.
\end{lemma}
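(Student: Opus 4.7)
The plan is to show something slightly stronger than the lemma as stated: the asserted equality holds \emph{globally} on $\base$, so one may take $K = 1$ and $U_1 = \base$. Neither the open cover nor the compactness of $\base$ is actually needed for this particular conclusion; the local-flavored wording is convenient for applications (such as the proof of \Cref{lem:tangent_spaces}) but the global partition satisfies it trivially.

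I would construct the partition via an \emph{index-sharing graph} $H$ on vertex set $\{1,\ldots,m\}$, in which $r$ and $s$ are adjacent whenever the index sets $\{i_r,j_r\}$ and $\{i_s,j_s\}$ intersect. Let $J_1,\ldots,J_L$ denote the connected components of $H$; they partition $\{1,\ldots,m\}$, and by construction the index sets $\{i_r,j_r \mid r \in J_{\ell_1}\}$ and $\{i_r,j_r \mid r \in J_{\ell_2}\}$ are disjoint whenever $\ell_1 \neq \ell_2$ (any common simplex index would be an edge of $H$ between the two components).

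It then remains to verify, for each $\ell$, the identity
\[
\pi\Big(\bigcap_{r \in J_\ell}(M_{\bm{a}, i_r} \cap M_{\bm{a}, j_r})\Big) \;=\; \bigcap_{r \in J_\ell} I_{\bm{a}}(\sigma_{i_r}, \sigma_{j_r})
\]
on all of $\base$. The inclusion $\subseteq$ is immediate: any lift $(\bp,v)$ on the left forces $v = f_{\bm{a}}(\sigma_{i_r},\bp) = f_{\bm{a}}(\sigma_{j_r},\bp)$ for every $r \in J_\ell$, so $\bp$ lies in each $I_{\bm{a}}(\sigma_{i_r},\sigma_{j_r})$. For the reverse, take $\bp$ on the right-hand side. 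Then $f_{\bm{a}}(\sigma_{i_r},\bp) = f_{\bm{a}}(\sigma_{j_r},\bp)$ holds for every $r \in J_\ell$, and because $J_\ell$ is connected in $H$ these equations chain together to force $f_{\bm{a}}(\sigma_k,\bp)$ to be the same value $v$ for every $k \in \bigcup_{r\in J_\ell}\{i_r,j_r\}$. The single point $(\bp,v)$ then lies in every $M_{\bm{a},k}$ for such $k$, hence in the full intersection on the left, and projects to $\bp$.

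The statement of \Cref{lem:open_cover} follows by taking $K = 1$, $U_1 = \base$, and using the partition $\{J_\ell\}$ (independent of $k$); the intersections with $U_k$ in the displayed equation become trivial. There is essentially no obstacle in this argument: the entire substantive content is the transitivity of equality along the connected components of $H$. The apparent need for a local cover disappears once one recognizes that the constraint $f_{\bm{a}}(\sigma_{i_r},\bp)=f_{\bm{a}}(\sigma_{j_r},\bp)$ lifts uniquely to the common value $v$ precisely when the indices it involves are tied together by a chain of equalities.
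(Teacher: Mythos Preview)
Your argument is correct, and in fact sharper than the paper's. The paper constructs the very same partition---starting from singletons $J^0_\ell = \{\ell\}$ and merging whenever two blocks share a simplex index, which is exactly taking connected components of your index-sharing graph $H$---but then argues only pointwise: for a fixed $y$ in the full intersection it verifies that the lift exists at $y$, asserts the displayed equality in some neighborhood $U_{\ell,y}$, and finally invokes compactness to pass to a finite subcover. Your observation that the equality
\[
\pi\Big(\bigcap_{r\in J_\ell}(M_{\bm a,i_r}\cap M_{\bm a,j_r})\Big)=\bigcap_{r\in J_\ell}I_{\bm a}(\sigma_{i_r},\sigma_{j_r})
\]
holds on all of $\base$, via the transitivity-along-$H$ argument, eliminates both the open cover and the compactness hypothesis; one may simply take $K=1$ and $U_1=\base$. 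This is a genuine streamlining: the paper's local formulation with a $y$-dependent partition $\{J_{\ell,y}\}$ obscures the fact that the partition does not actually depend on $y$, and its passage from ``the lift exists at $y$'' to ``the equality holds on a neighborhood of $y$'' is left unexplained, whereas your argument handles every point of $\base$ uniformly.
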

\begin{proof}
    Suppose that $y \in \bigcap_{r = 1}^m I_{\bm{a}}(\sigma_{i_r}, \sigma_{j_r})$. Let $J^0_{\ell} := \{\ell\}$ be an initial disjoint partition of $\{1, \ldots, m\}$. By definition, $\pi(M_{\bm{a}, i_\ell} \cap M_{\bm{a}, j_\ell}) = I_{\bm{a}}(\sigma_{i_\ell}, \sigma_{j_\ell})$. If $i_{\ell_1} = i_{\ell_2}$ for some $\ell_1 \neq \ell_2$, then $f(\sigma_{j_{\ell_1}}, y) = f(\sigma_{i_{\ell_1}}, y) = f(\sigma_{j_{\ell_2}}, y)$, so $y \in \pi(M_{{\bm{a}}, i_{\ell_1}} \cap M_{\bm{a}, j_{\ell_1}} \cap M_{\bm{a}, i_{\ell_2}} \cap M_{\bm{a}, j_{\ell_2}})$. We combine $J^0_{\ell_1}$ and $J^0_{\ell_2}$ into a single subset of the partition, and we iterate until we obtain a disjoint partition $\{J_{\ell, y}\}_{\ell}$ of $\{1, \ldots, m\}$ such that
    \begin{equation*}
        \{i_r, j_r \mid r \in J_{\ell_1, y}\} \cap \{i_r, j_r \mid r \in J_{\ell_2, y}\} = \emptyset
    \end{equation*}
    if $\ell_1 \neq \ell_2$ and 
    \begin{equation*}
        y \in \pi \Big( \bigcap_{r \in J_{\ell, y}} (M_{\bm{a}, i_r} \cap M_{\bm{a}, j_r})\Big)
    \end{equation*}
    for all $\ell$. Therefore, for each $\ell$, there is a neighborhood $U_{\ell, y}$ such that
    \begin{equation*}
         \pi \Big( \bigcap_{r \in J_{\ell, y}} (M_{\bm{a}, i_r} \cap M_{\bm{a}, j_r})\Big) \cap U_{\ell, y} = \bigcap_{r \in J_{\ell, y}} I_{\bm{a}}(\sigma_{i_r}, \sigma_{j_r}) \cap U_{\ell, k}\,.
    \end{equation*}
    Set $U_y := \bigcap_{\ell} U_{\ell, y}$. Because $B$ is compact, there is a finite open cover $\{U_k\}_{k=1}^K$, which has the desired properties by construction.
\end{proof}
The following lemma will be repeatedly used in Lemma \ref{lem:inter_component}.
\begin{lemma}\label{lem:RV_restriction}
Suppose that $g: \base \to \mathbb{R}$ is a smooth map and $y \in \R$ is a regular value with preimage $Z_y$. If $Z \subseteq \base$ is a submanifold such that $Z \pitchfork Z_y$, then $y$ is a regular value of $g \vert_Z: Z \to \R$.
\end{lemma}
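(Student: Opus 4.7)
The plan is to unwind the definition of a regular value in terms of differentials and then use a short linear-algebra argument. Recall that $y$ is a regular value of $g|_Z : Z \to \mathbb{R}$ precisely when, for every $p \in (g|_Z)^{-1}(y) = Z \cap Z_y$, the differential $d(g|_Z)_p : T_p Z \to T_y \mathbb{R} \cong \mathbb{R}$ is surjective. Fix such a point $p$; since $d(g|_Z)_p$ is simply the restriction of $dg_p$ to $T_p Z \subseteq T_p B$, it suffices to prove that $dg_p(T_p Z) = \mathbb{R}$.

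First, I would use the assumption that $y$ is a regular value of $g$ to identify the tangent space $T_p Z_y$ with the kernel of $dg_p$. This is the standard fact that the level set of a regular value is a submanifold whose tangent space at each point is the kernel of the differential, so $T_p Z_y = \ker(dg_p)$.

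Next, the transversality hypothesis $Z \pitchfork Z_y$ means that
\begin{equation*}
    T_p Z + T_p Z_y = T_p B,
\end{equation*}
and substituting $T_p Z_y = \ker(dg_p)$ gives $T_p Z + \ker(dg_p) = T_p B$. Applying $dg_p$ to both sides and using linearity yields
\begin{equation*}
    dg_p(T_p B) = dg_p(T_p Z) + dg_p(\ker(dg_p)) = dg_p(T_p Z).
\end{equation*}
Since $dg_p(T_p B) = \mathbb{R}$ by regularity of $y$ for $g$, we conclude $dg_p(T_p Z) = \mathbb{R}$, so $d(g|_Z)_p$ is surjective, completing the proof.

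There is no substantive obstacle here; the statement is essentially a packaging of the fact that the composition of a surjection onto $\mathbb{R}$ with a subspace that is complementary to its kernel remains surjective. The only care needed is to verify the identification $T_p Z_y = \ker(dg_p)$, which requires $y$ to be a regular value of the ambient map $g$ (not just of $g|_Z$), and this hypothesis is given.
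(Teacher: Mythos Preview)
Your proof is correct and follows essentially the same approach as the paper: both identify $T_p Z_y = \ker(dg_p)$ and use the transversality equation $T_p Z + \ker(dg_p) = T_p B$ to conclude that $dg_p|_{T_p Z}$ is surjective. The only cosmetic difference is that you apply $dg_p$ directly to this sum, while the paper argues that $T_p Z \not\subseteq \ker(dg_p)$ and then invokes $\dim\mathbb{R}=1$; your formulation is slightly cleaner and in fact works for targets of any dimension.
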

\begin{proof}
    At any $z \in Z_y$, we have $\ker(dg_z) = T_z Z_y$. Therefore, if $z \in Z \cap Z_y$, then $T_z Z \subseteq \ker(dg_z)$ only if $T_z Z \subseteq T_z Z_y$. Because $y$ is a regular value of $g:\base \to \R$, we have
    \begin{equation*}
        \dim(T_z(Z_y)) = \dim \base - 1\,,
    \end{equation*}
    so $T_z(Z_y)$ is a strict subset of $T_z \base$. Because $Z \pitchfork Z_y$, we have
    \begin{equation*}
        T_z Z + T_z Z_y = T_z \base\,,
    \end{equation*}
    so $T_z Z$ cannot be a subset of $T_z Z_y$. Therefore, $T_zZ$ is not a subset of $\ker(dg_z)$. This implies that $dg_z\vert_{T_z Z}$ is a surjection because $\dim \R = 1$. Therefore, $y$ is a regular value of $g\vert_Z$.
\end{proof}
\begin{lemma}\label{lem:inter_component}
 Let $\{ (i_r, j_r) \}_{r = 1}^m$ be a set of index pairs such that $\{i_r, j_r\} \neq \{i_s, j_s\}$ if $r \neq s$. For almost every $\bm{a} \in A$, we have that if 
 \begin{enumerate}
     \item $\bigcup_{\ell} J_\ell = \{1, \ldots, m\}$
is a disjoint partition with 
\begin{equation*}
    \{i_r, j_r \mid r \in J_{\ell_1}\} \cap \{i_r, j_r \mid r \in J_{\ell_2}\} = \emptyset
\end{equation*}
for $\ell_1 \neq \ell_2$ and
\item $U$ is an open set in $B$ such that
    \begin{equation}\label{eq:lift2}
        \pi \Big(\bigcap_{r \in J_{\ell}} (M_{\bm{a}, i_r} \cap M_{\bm{a}, j_r})\Big) \cap U = \bigcap_{r \in J_{\ell}} I_{\bm{a}}(\sigma_{i_r}, \sigma_{j_r}) \cap U\,,
    \end{equation}
    for all $\ell$, where $\pi$ is the projection $\pi: \base \times \mathbb{R} \to \base$,
 \end{enumerate}
then
\begin{enumerate}
    \item the set $\bigcap_{r \in J'} I_{\bm{a}}(\sigma_{i_r}, \sigma_{j_r})$ is a manifold for every $J' \subseteq \{1, \ldots, m\}$ and
    \item we have
    \begin{equation}\label{eq:inter_component}
    T_y\Big( \bigcap_{\ell} \bigcap_{r \in J_{\ell}} I_{\bm{a}}(\sigma_{i_r}, \sigma_{j_r})\Big) = \bigcap_{\ell} T_y\Big( \bigcap_{r \in J_{\ell}} I_{\bm{a}}(\sigma_{i_r}, \sigma_{j_r})\Big)
\end{equation}
for every $y \in \bigcap_{r = 1}^m I_{\bm{a}}(\sigma_{i_r}, \sigma_{j_r}) \cap U$.
\end{enumerate}

\end{lemma}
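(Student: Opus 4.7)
The plan is to verify claim (1) directly from the manifold results already established, and to prove claim (2) by lifting each partition component to a submanifold of $\base \times \R$ via the graphs $M_{\bm{a}, k}$, applying the tangent-space decomposition of Lemma \ref{lem:manifold_transverse}, and projecting back to $\base$.

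First I would fix $\bm{a} \in A$ in the full-measure set on which the conclusions of Lemmas \ref{lem:reduced_form}, \ref{lem:manifolds}, and \ref{lem:manifold_transverse} all hold simultaneously (a countable intersection of full-measure sets is full measure). Claim (1) is then immediate: an arbitrary intersection $\bigcap_{r \in J'} I_{\bm{a}}(\sigma_{i_r}, \sigma_{j_r})$ lies in $E^{|J'|, k}_{\bm{a}}$ for $k$ large enough, and by Lemma \ref{lem:reduced_form} equals a reduced intersection in $\overline{E^{m', k}_{\bm{a}}}$ for some $m' \leq |J'|$; Lemma \ref{lem:manifolds} then yields that this set is either empty or an $(n - m')$-dimensional submanifold of $\base$.

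For claim (2), write $S_\ell := \{i_r, j_r \mid r \in J_\ell\}$ and $N_\ell := \bigcap_{r \in J_\ell} I_{\bm{a}}(\sigma_{i_r}, \sigma_{j_r})$. The key identity is
\[
\widetilde{N}_\ell := \bigcap_{r \in J_\ell}\bigl(M_{\bm{a}, i_r} \cap M_{\bm{a}, j_r}\bigr) = \bigcap_{k \in S_\ell} M_{\bm{a}, k},
\]
so by Lemma \ref{lem:manifold_transverse} we have $T_p \widetilde{N}_\ell = \bigcap_{k \in S_\ell} T_p M_{\bm{a}, k}$ at each $p = (y, h_\ell) \in \widetilde{N}_\ell$. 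Since $\widetilde{N}_\ell \subseteq M_{\bm{a}, k_0}$ for any $k_0 \in S_\ell$ and $\pi$ restricts to a diffeomorphism on the graph $M_{\bm{a}, k_0}$, the map $\pi|_{\widetilde{N}_\ell}$ is a diffeomorphism onto its image, which by hypothesis (2) equals $N_\ell \cap U$. Using $T_p M_{\bm{a}, k} = \{(v, df_{\bm{a}}(\sigma_k, y)(v)) \mid v \in T_y \base\}$, a direct computation of $d\pi|_{T_p \widetilde{N}_\ell}$ yields
\[
T_y N_\ell = \bigl\{ v \in T_y \base \mid df_{\bm{a}}(\sigma_{k_1}, y)(v) = df_{\bm{a}}(\sigma_{k_2}, y)(v) \text{ for all } k_1, k_2 \in S_\ell \bigr\}.
\]

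An analogous lift-and-project argument applied to the reduced form of the full set of pairs characterizes $T_y N$, where $N := \bigcap_{r = 1}^m I_{\bm{a}}(\sigma_{i_r}, \sigma_{j_r})$, as the set of $v \in T_y \base$ satisfying $df_{\bm{a}}(\sigma_{i_r}, y)(v) = df_{\bm{a}}(\sigma_{j_r}, y)(v)$ for every $r$. Because $\bigcup_\ell J_\ell = \{1, \ldots, m\}$ partitions the pair indices and the lifting hypothesis forces each $J_\ell$'s pair graph to be connected (otherwise $\pi(\widetilde{N}_\ell)$ would be strictly smaller than $N_\ell$, contradicting hypothesis (2)), these per-pair linear conditions decouple into the per-component conditions cutting out $\bigcap_\ell T_y N_\ell$, giving $T_y N = \bigcap_\ell T_y N_\ell$. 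The main obstacle I foresee is making this cross-component decoupling rigorous: one must confirm that $T_y N$ does not collapse below $\bigcap_\ell T_y N_\ell$. The disjointness hypothesis $\{i_r, j_r \mid r \in J_{\ell_1}\} \cap \{i_r, j_r \mid r \in J_{\ell_2}\} = \emptyset$ is essential here, since it forces the defining linear conditions for distinct $N_\ell$'s to involve disjoint subsets of the filtration-value differentials, whose joint independence at $y$ is inherited from the $\base \times \R$ transversality supplied by Lemma \ref{lem:manifold_transverse}.
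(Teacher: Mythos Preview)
Your handling of claim~(1) via Lemmas~\ref{lem:reduced_form} and~\ref{lem:manifolds} is correct and matches the paper. The gap is in claim~(2), and it lies precisely where you flag it: you correctly identify the obstacle (``confirm that $T_y N$ does not collapse below $\bigcap_\ell T_y N_\ell$'') but your proposed resolution---inheriting the needed independence from Lemma~\ref{lem:manifold_transverse}---does not work. The conclusions of Lemmas~\ref{lem:manifolds} and~\ref{lem:manifold_transverse} alone are not enough. Concretely, take (locally, on a compact ball in $\R^3$) $f(\sigma_1,\cdot)=0$, $f(\sigma_2,\cdot)=x$, $f(\sigma_3,\cdot)=10$, $f(\sigma_4,\cdot)=10+x+y^2$, with $\bm{a}=0$, $J_1=\{1\}$, $J_2=\{2\}$, pairs $(2,1)$ and $(4,3)$. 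One checks that every reduced intersection has the correct dimension and that all the $M_k$ are mutually transverse in $\base\times\R$ (most multi-intersections are empty since $M_1,M_2$ sit near height $0$ and $M_3,M_4$ near height $10$), so both lemmas' conclusions hold. Yet $N_1=\{x=0\}$ and $N_2=\{x+y^2=0\}$ have $T_yN_1=T_yN_2=\{v_x=0\}$ at any point $y$ on the $z$-axis, while $T_y(N_1\cap N_2)=\{v_x=v_y=0\}$. Your asserted description of $T_yN$ as the common kernel of the differentials $d\bigl(f_{\bm{a}}(\sigma_{i_r},\cdot)-f_{\bm{a}}(\sigma_{j_r},\cdot)\bigr)$ is exactly what fails: that kernel is $\{v_x=0\}$, strictly larger than $T_yN$. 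The lift-and-project argument cannot supply this description for the \emph{full} intersection because the lifts $\widetilde{N}_{\ell_1}$ and $\widetilde{N}_{\ell_2}$ sit at different heights in $\base\times\R$ and do not meet there at all, so Lemma~\ref{lem:manifold_transverse} is vacuous for $\bigcap_{k\in S_{\ell_1}\cup S_{\ell_2}} M_{\bm{a},k}$.

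The paper's proof takes a genuinely different route: it never computes $T_yN$ directly. Instead it uses the disjointness of the $S_\ell$ \emph{dynamically}: perturbing only $\{a_k:k\in S_\ell\}$ moves $N_\ell$ through a smoothly-parameterized family of embeddings while keeping every other $N_{\ell'}$ fixed (this is the content of the paper's inductive argument, reducing each $N_\ell\cap U$ to a chain of level sets and showing it varies smoothly under these perturbations). Thom's transversality theorem then yields $N_\ell\pitchfork\bigcap_{\ell'<\ell}N_{\ell'}$ for almost every such perturbation, and the tangent-space equality follows. The essential point you are missing is that the cross-component transversality is obtained by \emph{further shrinking} the full-measure set of admissible $\bm{a}$, not inherited from the graph transversality already in hand.
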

\begin{proof}
It suffices to show that
\begin{equation}\label{eq:inter_component2}
    \Big(\bigcap_{r \in J_{\ell}} I_{\bm{a}}(\sigma_{i_r}, \sigma_{j_r}) \cap U \Big) \pitchfork \Big( \bigcap_{\ell' < \ell} \bigcap_{r \in J_{\ell}}I_{\bm{a}}(\sigma_{i_r}, \sigma_{j_r}) \cap U \Big)
\end{equation}
for all $\ell$ and almost every $\bm{a} \in A$. Informally, what we show first is that at almost every $\bm{a} \in A$, perturbations of $\bm{a}$ produce perturbations of $\bigcap_{r \in J_{\ell}} I_{\bm{a}}(\sigma_{i_r}, \sigma_{j_r}) \cap U$ for each $\ell$. Then at the end of the proof, we apply the fact that transverse intersections are generic.

By Lemmas \ref{lem:reduced_form} and \ref{lem:manifolds}, we may assume that $\bigcap_{r \in J'} I_{\bm{a}}(\sigma_{i_r}, \sigma_{j_r})$ is a manifold for every $J' \subseteq \{1, \ldots, m\}$.  By \eqref{eq:lift2}, we may assume without loss of generality that there is a sequence $k_1 < \cdots < k_c$ such that $j_1 = k_1$ and $i_r = j_{r-1}$ for all $r$ and $j_{r + 1} = i_r$ for all $r$. In other words, we may assume that $\bigcap_{r \in J_\ell} I_{\bm{a}}(\sigma_{i_r}, \sigma_{j_r})$ is of the form 
\begin{equation*}
    I_{\bm{a}}(\sigma_{k_c}, \sigma_{k_{c-1}} )\cap \cdots \cap I_{\bm{a}}(\sigma_{k_3}, \sigma_{k_2}) \cap I_{\bm{a}}(\sigma_{k_2}, \sigma_{k_1})
\end{equation*}
for all $\bm{a}$. The idea is that because the intersection lifts to an intersection of the corresponding manifolds (see \eqref{eq:lift2}), we can pair up the indices however we like.

Define the function $g_i : B \to \mathbb{R}$ by 
\begin{equation*}
    g_i(\bp) := f(\sigma_{k_i}, \bp) - f(\sigma_{k_{i-1}}, \bp)\,.
\end{equation*}
For almost every $\bm{a} \in A$, the quantity $a_{k_i} - a_{k_{i-1}}$ is a regular value of $g_i$ for all $i$, and the set of regular values is open. By the same argument as in the proof of Lemma \ref{lem:manifolds}, we have
\begin{equation*}
    \Big(I_{\bm{a}}(\sigma_{k_c}, \sigma_{k_{c-1}}) \cap U\Big)\pitchfork \Big( \bigcap_{i=2}^{c - 1} I_{\bm{a}}(\sigma_{k_i}, \sigma_{k_{i-1}}) \cap U\Big)
\end{equation*}
for almost every $\bm{a} \in A$. Therefore, $(a_{k_c} - a_{k_{c-1}})$ is a regular value of $g_{k_c} \vert_{\bigcap_{i=2}^{c - 1} I_{\bm{a}}(\sigma_{k_i}, \sigma_{k_{i-1}}) \cap U}$ by \Cref{lem:RV_restriction}. Additionally, for $\bm{\epsilon} \in \mathbb{R}^N$ such that $\epsilon_{k_c}$ and $\epsilon_{k_{c-1}}$ are sufficiently small, there are no critical values between $(a_{k_c} - a_{k_{c-1}})$ and $(a_{k_c} - a_{k_{c-1}} + \epsilon_{k_c} - \epsilon_{k_{c-1}})$. Because there are no critical values, the set $\bigcap_{r \in J_\ell} I_{\bm{a}}(\sigma_{i_r}, \sigma_{j_r}) \cap U$ (which is the $(a_{k_c} - a_{k_{c-1}})$-level set of $g_k \vert_{\bigcap_{i=2}^{c - 1} I_{\bm{a}}(\sigma_{k_i}, \sigma_{k_{i-1}}) \cap U}$) is a submanifold of $B$ that is diffeomorphic to $I_{\bm{a} + \bm{\epsilon}}(\sigma_{k_c}, \sigma_{k_{c-1}}) \cap \Big( \bigcap_{i=2}^{c - 1} I_{\bm{a}}(\sigma_{k_i}, \sigma_{k_{i-1}})\Big) \cap U$ (which is the $(a_{k_c} - a_{k_{c-1}} + \epsilon_{k_c} - \epsilon_{k_{c-1}})$-level set of $g_k \vert_{\bigcap_{i=2}^{c - 1} I_{\bm{a}}(\sigma_{k_i}, \sigma_{k_{i-1}}) \cap U}$), and these submanifolds are smoothly parameterized by $\epsilon_{k_c}, \epsilon_{k_{c-1}}$.

Now consider any $i_* \in \{2, \ldots, c - 1\}$. By induction on $i_*$, we will show that there is a set $A' \subseteq A$ such that $A \setminus A'$ has measure zero and such that for all $\bm{a} \in A'$, we have that 
\begin{enumerate}
    \item the set $\bigcap_{r \in J_\ell} I_{\bm{a} + \bm{\epsilon}}(\sigma_{i_r}, \sigma_{j_r}) \cap U$ is a submanifold of $B$ that is diffeomorphic to
    \begin{equation*}
        \bigcap_{r \in J_\ell} I_{\bm{a}}(\sigma_{i_r}, \sigma_{j_r}) \cap U
    \end{equation*}
    for sufficiently small $\bm{\epsilon} \in \mathbb{R}^N$, and
    \item these submanifolds are smoothly parameterized by $\bm{\epsilon}$.
\end{enumerate} 

Because $(a_{k_{i_*}} - a_{k_{i_* - 1}})$ is a regular value of $g_{i_*}$ and the set of regular values is open, there are no critical values between $(a_{k_{i_*}} - a_{k_{i_* - 1}})$ and $(a_{k_{i_*}} - a_{k_{i_* - 1}} - \epsilon_{k_{i_* - 1}})$ for sufficiently small $\epsilon_{k_{i_* - 1}}$. Therefore, for sufficiently small $\epsilon_{k_{i_* - 1}}$, the $(a_{k_{i_*}} - a_{k_{i_* - 1}} - \epsilon_{k_{i_* - 1}})$-level set of $g_{i_*}$ is a submanifold of $B$ that is diffeomorphic to the $(a_{k_{i_*}} - a_{k_{i_* - 1}})$-level set, and these submanifolds are smoothly parameterized by (sufficiently small) $\epsilon_{k_{i_* - 1}}$. Because transverse intersections are generic,
\begin{equation*}
    \Big(\bigcap_{i = i_* + 1}^c I_{\bm{a + \epsilon}}(\sigma_{k_i}, \sigma_{k_{i-1}})\Big) \cap \Big( \bigcap_{i=2}^{i_* - 1}I_{\bm{a + \epsilon}}(\sigma_{k_i}, \sigma_{k_{i-1}})\Big) \cap U
\end{equation*}
is transverse to the $(a_{k_{i_*}} - a_{k_{i_* - 1}} - \epsilon_{k_{i_* - 1}})$-level set of $g_{i_*}$ for almost every (sufficiently small) $\epsilon_{k_{i_* - 1}}$. Additionally, if the intersection is transverse, it is transverse for an open neighborhood of $\epsilon_{k_{i_* - 1}}$. Therefore, we can assume without loss of generality that this intersection is transverse at $\epsilon_{k_{i_* - 1}} = 0$ (if not, we can perturb $a_{k_{i_* - 1}}$ so that it is) and for all sufficiently small $\epsilon_{k_{i_* - 1}}$. This implies that $(a_{k_{i_*}} - a_{k_{i_* - 1}} - \epsilon_{k_{i_* - 1}})$ is also a regular value of $g_{i_*}$ restricted to $\Big(\bigcap_{i = i_* + 1}^c I_{\bm{a + \epsilon}}(\sigma_{k_i}, \sigma_{k_{i-1}})\Big) \cap \Big( \bigcap_{i=2}^{i_* - 1}I_{\bm{a + \epsilon}}(\sigma_{k_i}, \sigma_{k_{i-1}})\Big) \cap U$, by \Cref{lem:RV_restriction}. For sufficiently small $\epsilon_{k_{i_*}}$, there are no critical values between $(a_{k_{i_*}} - a_{k_{i_* - 1}} - \epsilon_{k_{i_* - 1}})$ and  $(a_{k_{i_*}} - a_{k_{i_* - 1}} + \epsilon_{k_{i_*}}- \epsilon_{k_{i_* - 1}})$. Therefore, for sufficiently small $\epsilon_{k_{i_*}}$, $\epsilon_{k_{i_* - 1}}$, we have that 
\begin{equation*}
    \Big(\bigcap_{i = i_* + 1}^c I_{\bm{a + \epsilon}}(\sigma_{k_i}, \sigma_{k_{i-1}})\Big) \cap \Big( \bigcap_{i=2}^{i_*}I_{\bm{a + \epsilon}}(\sigma_{k_i}, \sigma_{k_{i-1}})\Big) \cap U\,,
\end{equation*}
which is the $(a_{k_{i_*}} - a_{k_{i_* - 1}})$-level set of $g_{i_*}$ restricted to 
\begin{equation*}
    \Big(\bigcap_{i = i_* + 1}^c I_{\bm{a + \epsilon}}(\sigma_{k_i}, \sigma_{k_{i-1}})\Big) \cap \Big( \bigcap_{i=2}^{i_* - 1}I_{\bm{a + \epsilon}}(\sigma_{k_i}, \sigma_{k_{i-1}})\Big) \cap U\,,
\end{equation*}
is a submanifold of $\base$ that is diffeomorphic to 
\begin{equation*}
      \Big(\bigcap_{i = i_*}^c I_{\bm{a + \epsilon}}(\sigma_{k_i}, \sigma_{k_{i-1}})\Big) \cap \Big( \bigcap_{i=2}^{i_* - 1}I_{\bm{a + \epsilon}}(\sigma_{k_i}, \sigma_{k_{i-1}})\Big) \cap U\,,
\end{equation*}
which is the $(a_{k_{i_*}} - a_{k_{i_* - 1}} + \epsilon_{k_{i_*}}- \epsilon_{k_{i_* - 1}})$-level set of $g_{i_*}$ restricted to
\begin{equation*}
    \Big(\bigcap_{i = i_* + 1}^c I_{\bm{a + \epsilon}}(\sigma_{k_i}, \sigma_{k_{i-1}})\Big) \cap \Big( \bigcap_{i=2}^{i_* - 1}I_{\bm{a + \epsilon}}(\sigma_{k_i}, \sigma_{k_{i-1}})\Big) \cap U\,.
\end{equation*}
These submanifolds are smoothly parameterized by $\epsilon_{k_{i_*}}$ and $\epsilon_{k_{i_* + 1}}$. This concludes the inductive step.

Let $\bm{a} \in A'$, where $A'$ is defined as it was earlier in the proof. We showed above that for sufficiently small $\bm{\epsilon} \in \mathbb{R}^N$, the set of manifolds $\bigcap_{r \in \ell} I_{\bm{a + \epsilon}}(\sigma_{i_r}, \sigma_{j_r}) \cap U$ (parameterized by $\bm{\epsilon}$) is a smoothly parameterized family of embeddings of $\bigcap_{r \in \ell} I_{\bm{a}}(\sigma_{i_r}, \sigma_{j_r}) \cap U$ into $U \subseteq B$. Varying $\{\epsilon_r \mid r \in J_{\ell}\}$ (while holding $\epsilon_r$ constant for $r \not\in J_\ell$) produces a smoothly parameterized family of embeddings of $\bigcap_{r \in \ell} I_{\bm{a}}(\sigma_{i_r}, \sigma_{j_r}) \cap U$ while holding $\Big( \bigcap_{\ell' < \ell} \bigcap_{r \in J_\ell} I_{\bm{a}}(\sigma_{i_r}, \sigma_{j_r}) \cap U\Big)$ constant. Therefore, because transverse intersections are generic,
\begin{equation*}
    \big( \bigcap_{r \in J_{\ell}} I_{\bm{a}}(\sigma_{i_r}, \sigma_{j_r}) \cap U\Big) \pitchfork \Big( \bigcap_{\ell' < \ell} \bigcap_{r \in J_\ell} I_{\bm{a}}(\sigma_{i_r}, \sigma_{j_r}) \cap U\Big)
\end{equation*}
for all $\ell$ for almost every $\bm{\epsilon}$ in a neighborhood of $\bm{0} \in \mathbb{R}^N$. This proves \eqref{eq:inter_component2}, which completes the proof.
\end{proof}

\begin{lemma}\label{lem:intra_component}
    Let $\{ (i_r, j_r) \}_{r = 1}^m$ be a set of index pairs such that $\{i_r, j_r\} \neq \{i_s, j_s\}$ if $r \neq s$. For almost every $\bm{a} \in A$, we have that
    if $U$ is an open set in $B$ such that
    \begin{equation}\label{eq:lift}
        \pi \Big(\bigcap_{r = 1}^m (M_{\bm{a}, i_r} \cap M_{\bm{a}, j_r})\Big) \cap U = \bigcap_{r =1}^m I_{\bm{a}}(\sigma_{i_r}, \sigma_{j_r}) \cap U\,,
    \end{equation}
    where $\pi$ is the projection $\pi: \base \times \mathbb{R} \to \base$,
    then $\bigcap_{r \in J'} I_{\bm{a}}(\sigma_{i_r}, \sigma_{j_r})$ is a manifold for every $J' \subseteq \{1, \ldots, m\}$ and
\begin{equation}\label{eq:eq:intra_component}
        T_y\Big( I_{\bm{a}}(\sigma_{i_1}, \sigma_{j_1}) \cap \cdots I_{\bm{a}}(\sigma_{i_m}, \sigma_{j_m})\Big) = \bigcap_{r = 1}^m T_y \Big(I_{\bm{a}}(\sigma_{i_r}, \sigma_{j_r}) \Big)
    \end{equation}
    for all $y \in \bigcap_{r = 1}^m I_{\bm{a}}(\sigma_{i_r}, \sigma_{j_r}) \cap U$.
\end{lemma}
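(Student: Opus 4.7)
The plan is to exploit the lifting hypothesis to reduce the tangent space computation on $B$ to one on $B \times \mathbb{R}$, where Lemma \ref{lem:manifold_transverse} applies. Let $\mathcal{J} := \bigcup_r \{i_r, j_r\} \subseteq \{1, \ldots, N\}$ denote the set of simplex indices involved. The first conclusion of the lemma (that each $\bigcap_{r \in J'} I_{\bm{a}}(\sigma_{i_r}, \sigma_{j_r})$ is a manifold) follows by combining Lemma \ref{lem:reduced_form} (to express any subsystem in $m'$-reduced form) with Lemma \ref{lem:manifolds} (each reduced intersection is a smooth submanifold for almost every $\bm{a} \in A$).

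For the tangent space identity, the key observation is that each $M_{\bm{a}, j}$ is the graph of the smooth function $f_{\bm{a}}(\sigma_j, \cdot)$ on $B$, so $\pi$ restricts to a diffeomorphism from $\bigcap_{j \in \mathcal{J}} M_{\bm{a}, j}$ onto its image. By Lemma \ref{lem:manifold_transverse}, for almost every $\bm{a} \in A$,
\begin{equation*}
T_{(y,t)}\Big(\bigcap_{j \in \mathcal{J}} M_{\bm{a}, j}\Big) = \bigcap_{j \in \mathcal{J}} T_{(y,t)} M_{\bm{a}, j}\,.
\end{equation*}
Combining this with the lifting hypothesis \eqref{eq:lift} and pushing forward by $d\pi$ gives
\begin{equation*}
T_y\Big(\bigcap_{r=1}^m I_{\bm{a}}(\sigma_{i_r}, \sigma_{j_r})\Big) = d\pi\Big(\bigcap_{j \in \mathcal{J}} T_{(y,t)} M_{\bm{a}, j}\Big)\,.
\end{equation*}
Writing $L_j := d(f_{\bm{a}}(\sigma_j, \cdot))_y$ and using $T_{(y,t)} M_{\bm{a}, j} = \{(v, L_j(v)) : v \in T_y B\}$, an explicit linear-algebra computation yields
\begin{equation*}
T_y\Big(\bigcap_{r=1}^m I_{\bm{a}}(\sigma_{i_r}, \sigma_{j_r})\Big) = \{v \in T_y B : L_j(v) = L_{j'}(v) \text{ for all } j, j' \in \mathcal{J}\}\,.
\end{equation*}

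Since $T_y I_{\bm{a}}(\sigma_i, \sigma_j) = \ker(L_i - L_j)$, the inclusion $T_y(\bigcap_r I_{\bm{a}}) \subseteq \bigcap_r T_y I_{\bm{a}}(\sigma_{i_r}, \sigma_{j_r})$ is immediate, so it remains to establish the reverse. The main obstacle is that the pairs $(i_r, j_r)$ form a graph on $\mathcal{J}$ whose edges may fail to span all the differences $L_j - L_{j'}$, so $\bigcap_r \ker(L_{i_r} - L_{j_r})$ could a priori strictly contain the set $\{v : L_j(v) = L_{j'}(v) \text{ for all } j, j' \in \mathcal{J}\}$ computed above. I will close this gap with a dimension count: Lemma \ref{lem:manifolds} (after reducing to an $m'$-reduced subsystem via Lemma \ref{lem:reduced_form}) gives $\dim \bigcap_r I_{\bm{a}} = \dim B - m'$, while $\dim \bigcap_{j \in \mathcal{J}} M_{\bm{a}, j} = \dim B + 1 - |\mathcal{J}|$ by Lemma \ref{lem:manifold_transverse}. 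Matching these two dimensions via the lifting hypothesis forces $m' = |\mathcal{J}| - 1$, so the reduced pairs form a spanning tree on $\mathcal{J}$; the vectors $\{L_{i_r} - L_{j_r}\}_r$ then span every difference $L_j - L_{j'}$ with $j, j' \in \mathcal{J}$, which yields the reverse inclusion and completes the proof.
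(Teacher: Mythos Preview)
Your approach is correct and runs parallel to the paper's: both lift to $B\times\mathbb{R}$, apply Lemma~\ref{lem:manifold_transverse} to $\bigcap_{j\in\mathcal J}M_{\bm a,j}$, and push forward by $\pi$. The paper phrases the reverse inclusion by lifting $v\in\bigcap_r T_y I_{\bm a}(\sigma_{i_r},\sigma_{j_r})$ through each graph diffeomorphism $\pi_r:=\pi|_{M_{\bm a,i_r}\cap M_{\bm a,j_r}}$ and asserting that the lifts $d\pi_r^{-1}(v)=(v,L_{i_r}(v))$ all coincide; that coincidence is exactly the connectivity statement you isolate (that $L_j(v)$ is constant over $\mathcal J$), and the paper does not justify it inline---it is harmless in context because the lemma is only ever applied to the blocks $J_{\ell,k}$ of Lemma~\ref{lem:open_cover}, which are connected by construction. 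Your dimension count, matching $n-m'$ from Lemma~\ref{lem:manifolds} against $(n+1)-|\mathcal J|$ from the transversality established inside the proof of Lemma~\ref{lem:manifold_transverse}, derives connectivity straight from the lifting hypothesis and so genuinely sharpens the argument. Two small points to make explicit in a full write-up: the inference ``$m'=|\mathcal J|-1$ $\Rightarrow$ spanning tree'' uses that an $m'$-reduced system is automatically acyclic (the largest index in any putative cycle would need two outgoing edges, contradicting distinctness of the $i_r'$), and the passage from the reduced pairs back to the original ones uses that the reduction moves of Lemma~\ref{lem:reduced_form} preserve $\bigcap_r\ker(L_{i_r}-L_{j_r})$.
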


\begin{proof}
     By Lemmas \ref{lem:reduced_form} and \ref{lem:manifolds}, $\bigcap_{r \in J'} I_{\bm{a}}(\sigma_{i_r}, \sigma_{j_r})$ is a manifold for every $J' \subseteq \{1, \ldots, m\}$ for almost every $\bm{a} \in A$. We have 
    \begin{equation*}
        T_y\Big( I_{\bm{a}}(\sigma_{i_1}, \sigma_{j_1}) \cap \cdots \cap I_{\bm{a}}(\sigma_{i_m}, \sigma_{j_m})\Big) \subseteq \bigcap_{r = 1}^m T_y \Big(I_{\bm{a}}(\sigma_{i_r}, \sigma_{j_r}) \Big)
    \end{equation*}
    because $\bigcap_{r = 1}^m I_{\bm{a}}(\sigma_{i_r}, \sigma_{j_r}) \subseteq I_{\bm{a}}(\sigma_{i_s}, \sigma_{j_s})$ for all $s$.
    
    Let $\bm{v} \in \bigcap_{r = 1}^m T_y \Big(I_{\bm{a}}(\sigma_{i_r}, \sigma_{j_r}) \Big)$. Define $\pi_{[m]} := \pi \vert_{\bigcap_{r = 1}^m (M_{\bm{a}, i_r} \cap M_{\bm{a}, j_r}) \cap \pi^{-1}(U)}$, and define $\pi_r := \pi \vert_{M_{\bm{a}, i_r} \cap M_{\bm{a}, j_r} \cap \pi^{-1}(U)}$ for each $r$. Each $\pi_r$ is a diffeomorphism from $M_{\bm{a}, i_r} \cap M_{\bm{a}, j_r} \cap \pi^{-1}(U)$ to $I_{\bm{a}}(\sigma_{i_r}, \sigma_{j_r}) \cap U$, and $\pi_{[m]}$ is a diffeomorphism from $\bigcap_{r = 1}^m (M_{\bm{a}, i_r} \cap M_{\bm{a}, j_r}) \cap \pi^{-1}(U)$ to $\bigcap_{r = 1}^m I_{\bm{a}}(\sigma_{i_r}, \sigma_{j_r}) \cap U$. Let $\tilde{y} := \pi^{-1}_{[m]}$ (which exists because $\pi^{-1}_{[m]}$ is a diffeomorphism), and let $\tilde{v} := d\pi^{-1}_{[m]}(v)$ (which exists because $d\pi_{[m]}$ is an isomorphism). For all $r$, we have $\tilde{y} = \pi^{-1}_r(y)$ and $\tilde{v} := d\pi^{-1}_r(v)$. Therefore, 
    \begin{equation*}
        \tilde{v} \in \bigcap_{r = 1}^m T_{\tilde{y}}\Big( M_{\bm{a}, i_r} \cap M_{\bm{a}, j_r} \Big)\,.
    \end{equation*}
    By Lemma \ref{lem:manifold_transverse}, we have $T_{\tilde{y}}\Big( \bigcap_{r=1}^m (M_{\bm{a}, {i_r}} \cap M_{\bm{a}, j_r})\Big) = \bigcap_{r = 1}^m T_{\tilde{y}}(M_{\bm{a}, {i_r}} \cap M_{\bm{a}, j_r})$ for all $\tilde{y} \in \bigcap_{r=1}^m (M_{\bm{a}, {i_r}} \cap M_{\bm{a}, j_r})$ for almost every $\bm{a} \in A$, so
    % \begin{equation*}
    %     \bigcap_{r = 1}^m T_{\tilde{y}}\Big( M_{\bm{a}, i_r} \cap M_{\bm{a}, j_r} \Big) = T_{\tilde{y}}\Big( \bigcap_{r = 1}^m (M_{\bm{a}, i_r} \cap M_{\bm{a}, j_r})\Big)\,,
    % \end{equation*}
    % for almost every $\bm{a} \in A$, so
    \begin{equation*}
        \tilde{v} \in T_{\tilde{y}}\Big( \bigcap_{r = 1}^m (M_{\bm{a}, i_r} \cap M_{\bm{a}, j_r})\Big)
    \end{equation*}
    for almost every $\bm{a} \in A$. Therefore, $v = d\pi_{[m]}(\tilde{v})$ is in $T_y \Big( \pi_{[m]} \Big( \bigcap_{r = 1}^m (M_{\bm{a}, i_r} \cap M_{\bm{a}, j_r}) \Big)\Big) = T_y\Big( \bigcap_{r = 1}^m I_{\bm{a}}(\sigma_{i_r}, \sigma_{j_r})\Big)$, which implies that 
    \begin{equation*}
        T_y\Big( \bigcap_{r = 1}^m I_{\bm{a}}(\sigma_{i_r}, \sigma_{j_r})\Big) \supseteq \bigcap_{r = 1}^m T_y \Big(I_{\bm{a}}(\sigma_{i_r}, \sigma_{j_r}) \Big)\,.
    \end{equation*}
\end{proof}

The following series of lemmas is used to prove Lemma \ref{lem:locallyfinite}, which shows that $\mathcal{Y}_{\bm{a}}$ is locally finite for almost every $\bm{a} \in A$, and Lemma \ref{lem:frontier}, which shows that $\mathcal{Y}_{\bm{a}}$ satisfies the Axiom of the Frontier for almost every $\bm{a} \in A$. (Recall that $\mathcal{Y}_{\bm{a}}$ is the set of subsets of $\base$ that is defined by \eqref{eq:Ya_def}.)

\begin{lemma}\label{lem:Z_decomp}
      If $\bm{a} \in A$ is such that each $Y \in \mathcal{Y}_{\bm{a}}$ is a manifold, then for any strict partial order $\prec$ on $\K$, there is a unique subset $\mathcal{Y}^{\prec}_{\bm{a}} \subseteq \mathcal{Y}_{\bm{a}}$ such that $Z^{\prec}_{\bm{a}} = \bigcup_{Y \in \mathcal{Y}^{\prec}_{\bm{a}}} Y$, where $Z^{\prec}_{\bm{a}}$ is defined as in \eqref{eq:Zdef}.
\end{lemma}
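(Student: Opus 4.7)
The plan is to define $\mathcal{Y}^{\prec}_{\bm{a}}$ directly as the set of strata whose associated constant order (guaranteed by \Cref{lem:constantorder_strata}) equals $\prec$, and then verify both the equality and uniqueness.

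First, I would observe that $\mathcal{Y}_{\bm{a}}$ partitions $\base$. By the definition in \eqref{eq:Ya_def}, the sets $\base^m_{\bm{a}} \setminus \base^{m-1}_{\bm{a}}$ for $m = 0, \ldots, n$ are pairwise disjoint, and their union is $\base^n_{\bm{a}} = \base$. The elements of $\mathcal{Y}^m_{\bm{a}}$ are the connected components of $\base^m_{\bm{a}} \setminus \base^{m-1}_{\bm{a}}$, so these components are pairwise disjoint within each $m$ and (by disjointness of the ambient sets) across different $m$; together they cover $\base$. Hence every $\bp \in \base$ lies in a unique $Y(\bp) \in \mathcal{Y}_{\bm{a}}$.

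Next, because each $Y \in \mathcal{Y}_{\bm{a}}$ is a manifold by hypothesis, \Cref{lem:constantorder_strata} yields a strict partial order $\prec_Y$ on $\K$ with $\prec_{f_{\bm{a}}(\cdot, y)}$ equal to $\prec_Y$ for every $y \in Y$. Given the fixed strict partial order $\prec$, I would then set
\[
\mathcal{Y}^{\prec}_{\bm{a}} := \{Y \in \mathcal{Y}_{\bm{a}} \mid \prec_Y \text{ equals } \prec\}.
\]
For the inclusion $\bigcup_{Y \in \mathcal{Y}^{\prec}_{\bm{a}}} Y \subseteq Z^{\prec}_{\bm{a}}$: if $y \in Y$ for some $Y \in \mathcal{Y}^{\prec}_{\bm{a}}$, then $\prec_{f_{\bm{a}}(\cdot, y)} = \prec_Y = \prec$, so $y \in Z^{\prec}_{\bm{a}}$ by \eqref{eq:Zdef}. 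For the reverse inclusion, if $\bp \in Z^{\prec}_{\bm{a}}$, let $Y := Y(\bp)$; then $\prec = \prec_{f_{\bm{a}}(\cdot, \bp)} = \prec_Y$, so $Y \in \mathcal{Y}^{\prec}_{\bm{a}}$ and $\bp \in \bigcup_{Y' \in \mathcal{Y}^{\prec}_{\bm{a}}} Y'$.

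Finally, uniqueness follows from the partition property. If $\mathcal{Y}' \subseteq \mathcal{Y}_{\bm{a}}$ also satisfies $Z^{\prec}_{\bm{a}} = \bigcup_{Y \in \mathcal{Y}'} Y$, then any $Y \in \mathcal{Y}'$ is nonempty and contained in $Z^{\prec}_{\bm{a}}$, so picking any $y \in Y$ gives $\prec_Y = \prec_{f_{\bm{a}}(\cdot, y)} = \prec$, placing $Y \in \mathcal{Y}^{\prec}_{\bm{a}}$. Conversely, each $Y \in \mathcal{Y}^{\prec}_{\bm{a}}$ is contained in $Z^{\prec}_{\bm{a}} = \bigcup_{Y' \in \mathcal{Y}'} Y'$ and, because $\mathcal{Y}_{\bm{a}}$ partitions $\base$, $Y$ must coincide with some $Y' \in \mathcal{Y}'$. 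Thus $\mathcal{Y}' = \mathcal{Y}^{\prec}_{\bm{a}}$. There is no substantive obstacle in this argument; the only point requiring care is the verification that $\mathcal{Y}_{\bm{a}}$ is a genuine partition of $\base$, which follows directly from the definitions once the hypothesis that each $Y$ is a manifold is in hand.
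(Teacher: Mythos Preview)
Your proof is correct and follows essentially the same approach as the paper: both hinge on \Cref{lem:constantorder_strata} to conclude that each $Y \in \mathcal{Y}_{\bm{a}}$ is either contained in $Z^{\prec}_{\bm{a}}$ or disjoint from it, with uniqueness coming from the partition property. Your version is simply more explicit about the partition and the uniqueness argument (note, incidentally, that the partition property already holds by definition of connected components and does not require the manifold hypothesis; the latter is needed only to invoke \Cref{lem:constantorder_strata}).
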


\begin{proof}
Let $Y \in \mathcal{Y}_{\bm{a}}$ and suppose that $Y \cap Z_{\bm{a}}^{\prec} \neq \emptyset$. This implies that there is a point $\bp \in Y$ such that $\prec_{f_{\bm{a}}(\cdot, \bp)}$ is the same as $\prec$. By Lemma \ref{lem:constantorder_strata}, the simplex order induced by $f$ is constant in $Y$, so $Y \subseteq Z_{\bm{a}}^{\prec}$.
\end{proof}

\begin{lemma}\label{lem:uniqueY}
Let $\prec$ be a strict partial order on the simplices in $\K$. Let $\bm{a} \in A$ be such that
\begin{enumerate}
\item every $Y \in \mathcal{Y}_{\bm{a}}$ is a manifold, where $\mathcal{Y}_{\bm{a}}$ is defined as in \eqref{eq:Ya_def}, 
\item $M_{\bm{a}, i} \pitchfork M_{\bm{a}, j}$ for all $i \neq j$, where $M_{\bm{a}, i}$ is defined as in \eqref{eq:Ma_def},
\item $\bigcap_{r=1}^m I_{\bm{a}}(\sigma_{i_r}, \sigma_{j_r})$ is a manifold for all sets $\{(i_r, j_r\}_{r =1}^m$ of index pairs, and
\item we have 
\begin{equation}\label{eq:tangent_condition}
T_{\bp}\Big(\bigcap_{r=1}^m I_{\bm{a}}(\sigma_{i_r}, \sigma_{j_r})\Big) = \bigcap_{r=1}^m T_{\bp} \Big(I_{\bm{a}}(\sigma_{i_r}, \sigma_{j_r})\Big)
\end{equation}
for all sets $\{(i_r, j_r\}_{r =1}^m$ of index pairs and all $\bp \in \bigcap_{r=1}^m I_{\bm{a}}(\sigma_{i_r}, \sigma_{j_r})$.
\end{enumerate}
Let $\mathcal{Y}^{\prec}_{\bm{a}}$ be the unique subset of $\mathcal{Y}_{\bm{a}}$ such that $Z_{\bm{a}}^{\prec} = \bigcup_{Y \in \mathcal{Y}^{\prec}_{\bm{a}}} Y$, which exists by Lemma \ref{lem:Z_decomp}. Then every $\bp \in \base$ has a neighborhood that intersects at most one set $Y \in \mathcal{Y}^{\prec}_{\bm{a}}$.
\end{lemma}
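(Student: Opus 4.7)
The plan is to work with $W := \bigcap_{(\sigma, \tau) \in I} I_{\bm{a}}(\sigma, \tau)$, where $I$ is the set of pairs of simplices incomparable under $\prec$. By hypothesis~(3), $W$ is a smooth submanifold of $\base$; by Lemma~\ref{lem:reduced_form}, $W$ equals a reduced intersection in $\overline{E^{m_W}_{\bm{a}}}$ for some $m_W$, giving it dimension $n - m_W$. The two observations that drive the proof are (a) every stratum $Y \in \mathcal{Y}^{\prec}_{\bm{a}}$ is an open subset of $W$, and (b) for a small enough neighborhood $U$ of $\bp$, the set $U \cap Z^{\prec}_{\bm{a}}$ is connected. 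Granted these, $U \cap Z^{\prec}_{\bm{a}} = \bigsqcup_{Y \in \mathcal{Y}^{\prec}_{\bm{a}}} (U \cap Y)$ is a disjoint union of open subsets of $U \cap W$, and connectedness forces at most one summand to be nonempty, completing the proof.

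For (a), take $q \in Y$. Since $q \in Z^{\prec}_{\bm{a}}$, the only equalities $f_{\bm{a}}(\sigma, q) = f_{\bm{a}}(\tau, q)$ at $q$ come from pairs in $I$, so any $S \in \overline{E^{\ell}_{\bm{a}}}$ containing $q$ is defined by $\ell$ equalities drawn from $I$. These generate a sub-relation of the equivalence relation on simplex indices generated by $I$; since $W$ is $m_W$-reduced for the latter, the former can be generated by at most $m_W$ pairs, so $\ell$-reducedness of $S$ forces $\ell \leq m_W$. If $\ell = m_W$, the sub-relation must coincide with $W$'s relation, forcing $S = W$. Hence $q \in \base_{\bm{a}}^{n - m_W} \setminus \base_{\bm{a}}^{n - m_W - 1}$, so the stratum $Y$ (which satisfies $Y \subseteq Z^{\prec}_{\bm{a}} \subseteq W$) is a connected component of $W \setminus \base_{\bm{a}}^{n - m_W - 1}$, which is open in $W$.

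For (b), if $\bp \notin \overline{Z^{\prec}_{\bm{a}}}$ we may pick $U$ disjoint from $Z^{\prec}_{\bm{a}}$; otherwise $\bp \in W$. Let $J := \{(\sigma, \tau) : \sigma \prec \tau,\ f_{\bm{a}}(\sigma, \bp) = f_{\bm{a}}(\tau, \bp)\}$ and $g_{(\sigma, \tau)} := f_{\bm{a}}(\tau, \cdot) - f_{\bm{a}}(\sigma, \cdot)$. Choose $U$ small enough that $U \cap W$ is connected, every strict inequality $g_{(\sigma,\tau)} > 0$ that holds at $\bp$ persists throughout $U$, and $U$ is disjoint from every closed set $I_{\bm{a}}(\sigma, \tau)$ not containing $\bp$. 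Then $U \cap Z^{\prec}_{\bm{a}} = \{q \in U \cap W : g_{(\sigma, \tau)}(q) > 0 \text{ for all } (\sigma, \tau) \in J\}$. Hypothesis~(4) applied to $W \cap \bigcap_{(\sigma,\tau) \in J} I_{\bm{a}}(\sigma, \tau)$ identifies the tangent space of the intersection at $\bp$ with the intersection of tangent spaces; selecting a maximal linearly independent subset of $\{dg_{(\sigma, \tau)}|_{T_{\bp} W}\}_{(\sigma, \tau) \in J}$, the implicit function theorem produces local coordinates on $U \cap W$ centered at $\bp$ in which these functions become coordinate functions, and the feasible region becomes the intersection of open conditions whose linearizations form a convex cone, hence is connected for $U$ small enough.

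The main obstacle I expect is the final connectedness step when the differentials $\{dg_{(\sigma, \tau)}|_{T_{\bp} W}\}_{(\sigma,\tau) \in J}$ are linearly dependent: the nonlinear feasible region must be shown connected without relying on full transversality of every constraint. The key is that nonemptiness of $Z^{\prec}_{\bm{a}}$ near $\bp$ (from $\bp \in \overline{Z^{\prec}_{\bm{a}}}$) ensures that the linearized feasible region in the implicit-function chart is a nonempty open convex cone, and a standard linearization/approximation argument (valid for $U$ sufficiently small) transfers connectedness from the linearized cone to the nonlinear region. Once (b) is secured, the decomposition $U \cap Z^{\prec}_{\bm{a}} = \bigsqcup_{Y \in \mathcal{Y}^{\prec}_{\bm{a}}} (U \cap Y)$ has at most one nonempty summand, so $U$ meets at most one stratum of $\mathcal{Y}^{\prec}_{\bm{a}}$.
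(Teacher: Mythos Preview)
Your strategy is sound and is organized differently from the paper's. The paper works directly in $\base$: it uses hypothesis~(4) to produce a single local chart $\phi:U\to\mathcal{B}$ carrying every $I_{\bm{a}}(\sigma_i,\sigma_j)$ through $\bp$ to its tangent hyperplane, so that $U\cap Z^{\prec}_{\bm{a}}$ is identified with a convex cell of a hyperplane arrangement and hence lies in a single stratum. You instead isolate the manifold $W=\bigcap_{(\sigma,\tau)\in I} I_{\bm{a}}(\sigma,\tau)$, show that every $Y\in\mathcal{Y}^{\prec}_{\bm{a}}$ is open in $W$ (a clean structural fact the paper does not isolate), and reduce to connectedness of $U\cap Z^{\prec}_{\bm{a}}$. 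Your step~(a) is correct; note that when $Z^{\prec}_{\bm{a}}\neq\emptyset$ the incomparability relation of $\prec$ is already an equivalence relation (it coincides with equality of $f_{\bm{a}}(\cdot,q)$ at any $q\in Z^{\prec}_{\bm{a}}$), which makes the counting argument for $\ell\le m_W$ and the identification $S=W$ when $\ell=m_W$ go through cleanly.

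The genuine weak point is step~(b) in the case where the restricted differentials $\{dg_{(\sigma,\tau)}|_{T_{\bp}W}\}_{(\sigma,\tau)\in J}$ are linearly dependent. Extending a maximal independent subfamily to coordinates straightens only those constraints; the remaining $g_{(\sigma,\tau)}$ stay nonlinear, and the sentence ``a standard linearization/approximation argument \ldots\ transfers connectedness from the linearized cone to the nonlinear region'' is not standard. Two constraints with proportional differentials at $\bp$ but different second-order behavior can in principle carve out a region with several components accumulating at $\bp$; what rules this out is precisely hypothesis~(4), which forces all the zero sets $\{g_{(\sigma,\tau)}=0\}\cap W$ to intersect in the expected codimension. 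The clean fix is exactly the paper's move: hypothesis~(4), applied to the full family of $I_{\bm{a}}(\sigma,\tau)$ through $\bp$ (those in $I$ and in $J$ together), is the clean-intersection condition that guarantees a single chart on $U$ (equivalently on $U\cap W$) simultaneously straightening every such hypersurface to its tangent hyperplane. In that chart your feasible region $\{g_{(\sigma,\tau)}>0:(\sigma,\tau)\in J\}\cap W$ is a finite intersection of open half-spaces inside an affine subspace, hence convex and connected. Once you invoke that simultaneous straightening rather than the partial implicit-function-theorem step, your argument for~(b) is complete and the rest of your proof goes through.
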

\begin{proof}
    Let $S(\bp) = \{(\sigma_i, \sigma_j) \mid \bp \in I_{\bm{a}}(\sigma_i, \sigma_j)\}$. There is a neighborhood $U_0$ of $\bp$ such that $I(\sigma_i, \sigma_j) \cap U_0 \neq \emptyset$ if and only if $(\sigma_i, \sigma_j) \in S(\bp)$. In a neighborhood of $\bp$, each $I(\sigma_i, \sigma_j)$ is locally diffeomorphic (via the exponential map, for example) to $T_{\bp}(I_{\bm{a}}(\sigma_i, \sigma_j))$, which is an $(n-1)$-dimensional hyperplane. By \eqref{eq:tangent_condition}, these local diffeomorphisms are compatible with each other, so there is a neighborhood $U$ of $\bp$, a set $\{H(\sigma_i, \sigma_j)\}_{(\sigma_i, \sigma_j) \in S(\bp)}$ of hyperplanes in $\mathbb{R}^n$, and a homeomorphism $\phi: U \to \mathcal{B}$, where $\mathcal{B}$ is the open unit $n$-ball, such that
\begin{equation*}
    \phi\Big(\bigcap_{(\sigma_i, \sigma_j) \in S'(\bp)} I_{\bm{a}}(\sigma_i, \sigma_j)\cap U \Big) = \bigcap_{(\sigma_i, \sigma_j) \in S'(\bp)} H(\sigma_i, \sigma_j) \cap \mathcal{B}
\end{equation*}
for all $S'(\bp) \subseteq S(\bp)$. See Figure \ref{fig:U} for intuition, where we illustrate the neighborhood $U$ for a few points $p \in \base$.
\begin{figure}
    \centering
    \includegraphics[width = .7\textwidth]{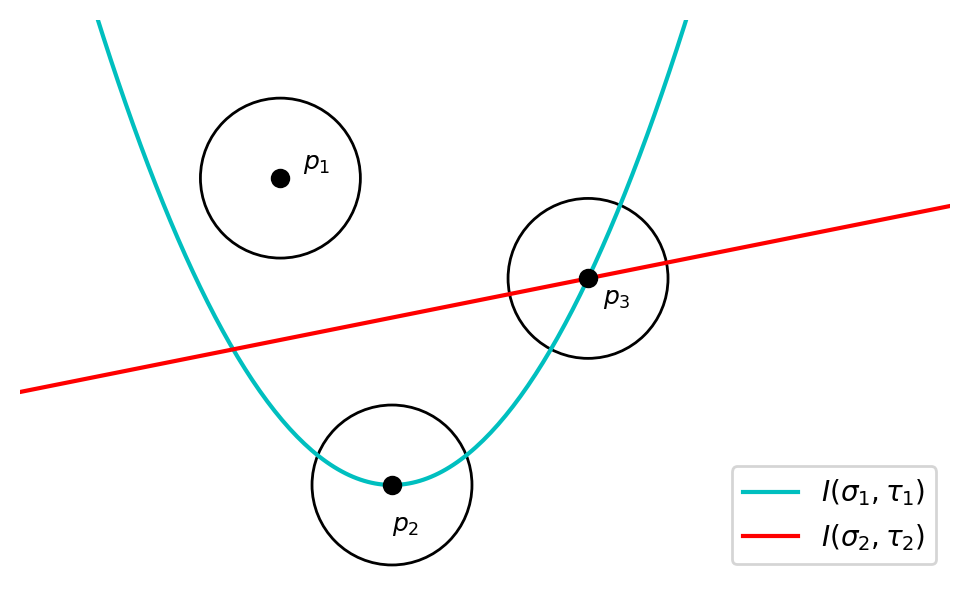}
    \caption{For each point $p_i$, we illustrate the idea behind the homeomorphism $\phi: U_i \to \mathcal{B}$ in the proof of Lemma \ref{lem:uniqueY}, where $U_i$ is a neighborhood of $p_i$ and $\mathcal{B}$ is an open unit ball in $\mathbb{R}^2$. The base space is $\base = \mathbb{R}^2$; we show only the sets $I(\sigma_1, \tau_1)$ and $I(\sigma_2, \tau_2)$, which are curves in the plane, for some simplices $\sigma_1, \sigma_2, \tau_1, \tau_2 \in \K$ and some fibered filtration function $f: \K \times \base \to \mathbb{R}$.}
    \label{fig:U}
\end{figure}
The hyperplanes induce a stratification of $\mathcal{B}$, with a set $\mathcal{Y}'$ of strata, such that for all $Y' \in \mathcal{Y}'$, we have $Y' = \phi(Y \cap U)$ for some $Y \in \mathcal{Y}_{\bm{a}}$. Because $M_{\bm{a}, i} \pitchfork M_{\bm{a}, j}$ for all $i \neq j$, we have that $\mathcal{B} \setminus H(\sigma_i, \sigma_j)$ is the disjoint union of open sets $W_1(\sigma_i, \sigma_j)$ and $W_2(\sigma_i, \sigma_j)$ such that 
\begin{align*}
    f_{\bm{a}}(\sigma_i, \bp') &< f_{\bm{a}}(\sigma_j, \bp') \qquad \text{for all } \bp' \in \phi^{-1}(W_1(\sigma_i, \sigma_j))\,,\\
    f_{\bm{a}}(\sigma_j, \bp') &< f_{\bm{a}}(\sigma_i, \bp') \qquad \text{for all } \bp' \in \phi^{-1}(W_2(\sigma_i, \sigma_j))
\end{align*}
for all $(\sigma_i, \sigma_j) \in S(\bp)$. Suppose that $u_1$ and $u_2$ are points in $U$ such that $\prec_{f_{\bm{a}}(\cdot, u_1)}$ and $\prec_{f_{\bm{a}}(\cdot, u_2)}$ are both the same as $\prec$. For each $(\sigma_i, \sigma_j) \in S(\bp)$, define the set
\begin{equation}\label{eq:Vsigmatau_def}
    V(\sigma_i, \sigma_j) := \begin{cases}
        H(\sigma_i, \sigma_j)\,, &  \sigma_i \not\prec \sigma_j \text{ and } \sigma_j \not\prec \sigma_i \\
        W_1(\sigma_i, \sigma_j)\,, & \sigma_i \prec \sigma_j \\
        W_2(\sigma_i, \sigma_j)\,, & \sigma_j \prec \sigma_i\,.
    \end{cases}
\end{equation}
We define
\begin{equation}\label{eq:V_def}
    V := \bigcap_{(\sigma_i, \sigma_j) \in S(\bp)} V(\sigma_i, \sigma_j) \neq \emptyset\,,
\end{equation}
which is a stratum in $\mathcal{Y}'$. Therefore, there is a $Y \in \mathcal{Y}_{\bm{a}}$ such that $Y \cap U = \phi^{-1}(V)$, with $u_1, u_2 \in Y \cap U$. Therefore, $Y$ is the only element of  $\mathcal{Y}^{\prec}_{\bm{a}}$ that $U$ intersects.
\end{proof}

\begin{lemma}\label{lem:Zboundary}
Let $\prec_0$ be a strict partial order on the simplices in $\K$, and define $\mathcal{O}$ to be the set of strict partial orders $\prec$ such that
\begin{enumerate}
    \item if $\sigma \prec_0 \tau$ and $\sigma \neq \tau$, then either we have $\sigma \prec \tau$ or we have $\sigma \not\prec \tau$ and $\tau \not\prec\sigma$,
    \item if $\sigma \not\prec_0 \tau$ and $\tau \not\prec_0 \sigma$, then $\sigma \not\prec \tau$ and $\tau \not\prec \sigma$, and
    \item the strict partial order $\prec$ is not the same as $\prec_0$.
\end{enumerate}
If $\bm{a} \in A$ is such that
\begin{enumerate}
\item every $S \in \overline{E_{\bm{a}}^{n - \ell}}$ is an $\ell$-dimensional smooth submanifold for every $\ell \in \{1, \ldots, n\}$, where $n$ is the dimension of $\base$,
\item $M_{\bm{a}, i} \pitchfork M_{\bm{a}, j}$ for all $i \neq j$,
\item the set $\bigcap_{r = 1}^m I_{\bm{a}}(\sigma_{i_r}, \sigma_{j_r})$ is a manifold for all sets $\{(i_r, j_r)\}_{r = 1}^m$ of index pairs, and
\item $T_y(\bigcap_{r = 1}^m I_{\bm{a}}(\sigma_{i_r}, \sigma_{j_r}) = \bigcap_{r = 1}^m T_y (I_{\bm{a}}(\sigma_{i_r}, \sigma_{j_r}))$ for all sets $\{(i_r, j_r)\}_{r = 1}^m$ of index pairs,
\end{enumerate}
then 
\begin{equation*}
    \partial Z^{\prec_0}_{\bm{a}} = \bigcup_{\prec \text{ in } \mathcal{O}} Z^{\prec}_{\bm{a}}\,.
\end{equation*}
\end{lemma}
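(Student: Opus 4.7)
The plan is to prove each inclusion of $\partial Z^{\prec_0}_{\bm{a}} = \bigcup_{\prec \in \mathcal{O}} Z^{\prec}_{\bm{a}}$ separately; throughout, write $\prec_{\bp} := \prec_{f_{\bm{a}}(\cdot, \bp)}$ and $T_\prec := \{(\sigma, \tau) : \sigma \neq \tau,\, \sigma \not\prec \tau,\, \tau \not\prec \sigma\}$ for the tie set of a strict partial order $\prec$. For the inclusion $(\subseteq)$, I would use continuity: given $\bp \in \overline{Z^{\prec_0}_{\bm{a}}} \setminus Z^{\prec_0}_{\bm{a}}$ and a sequence $\bp_n \in Z^{\prec_0}_{\bm{a}}$ with $\bp_n \to \bp$, strict inequalities $\sigma \prec_0 \tau$ pass to weak inequalities at $\bp$ (yielding condition 1 of $\mathcal{O}$ for $\prec_{\bp}$), and ties persist as ties (condition 2). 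Since $\bp \notin Z^{\prec_0}_{\bm{a}}$ forces $\prec_{\bp} \neq \prec_0$, we conclude $\prec_{\bp} \in \mathcal{O}$ and $\bp \in Z^{\prec_{\bp}}_{\bm{a}}$.

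For the reverse inclusion, fix $\bp \in Z^{\prec}_{\bm{a}}$ with $\prec \in \mathcal{O}$; the goal is to construct $\bp_n \in Z^{\prec_0}_{\bm{a}}$ converging to $\bp$. The local-linearization argument from the proof of \Cref{lem:uniqueY} (using conditions (2)--(4)) produces a neighborhood $U$ of $\bp$, hyperplanes $\{H(\sigma, \tau)\}_{(\sigma, \tau) \in T_\prec}$ through the origin of $\R^n$, and a homeomorphism $\phi: U \to \mathcal{B}$ sending $\bp$ to $0$, with $\phi(\bigcap_{S'} I_{\bm{a}}(\sigma, \tau) \cap U) = \bigcap_{S'} H(\sigma, \tau) \cap \mathcal{B}$ for every $S' \subseteq T_\prec$, and each $H(\sigma, \tau)$ partitioning $\mathcal{B}$ into half-spaces $W_1(\sigma, \tau)$ (where $f_{\bm{a}}(\sigma, \cdot) < f_{\bm{a}}(\tau, \cdot)$) and $W_2(\sigma, \tau)$. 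By condition 2 of $\mathcal{O}$, $T_{\prec_0} \subseteq T_\prec$, so $A := \bigcap_{(\sigma, \tau) \in T_{\prec_0}} H(\sigma, \tau) \cap \mathcal{B}$ is a linear subspace of $\mathcal{B}$ through the origin. For each $(\sigma, \tau) \in T_\prec \setminus T_{\prec_0}$, condition 1 of $\mathcal{O}$ combined with $(\sigma, \tau) \notin T_{\prec_0}$ implies exactly one of $\sigma \prec_0 \tau$ or $\tau \prec_0 \sigma$ holds; let $V(\sigma, \tau)$ be the associated half-space, and set $V := A \cap \bigcap_{(\sigma, \tau) \in T_\prec \setminus T_{\prec_0}} V(\sigma, \tau)$. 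Then $\phi^{-1}(V) \subseteq Z^{\prec_0}_{\bm{a}}$, so the task reduces to showing $V \neq \emptyset$ (any ray from the origin inside $V$ then supplies the desired sequence).

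To show $V \neq \emptyset$, I would proceed in two steps. First, each $H(\sigma, \tau)$ with $(\sigma, \tau) \in T_\prec \setminus T_{\prec_0}$ does not contain $A$: if it did, condition (4) would yield $T_{\bp}(\bigcap_{T_{\prec_0} \cup \{(\sigma, \tau)\}} I_{\bm{a}}) = T_{\bp} M_{T_{\prec_0}}$, so the two submanifolds would have the same dimension at $\bp$; but $\sigma, \tau$ lie in distinct $\prec_0$-tie classes, so adding $(\sigma, \tau)$ to $T_{\prec_0}$ strictly increases the reduced-pair count by one (it merges two components of the ``tie graph'' whose vertices are the simplices and whose edges are the tied pairs), and condition (1) then forces a strict drop in dimension, a contradiction. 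Hence each $H(\sigma, \tau) \cap A$ is a proper hyperplane of $A$. Second, assigning to each $\prec_0$-tie class $C$ the linear functional $\eta_C := df_{\bm{a}}(\sigma, \cdot)|_A$ (well-defined since $\eta_\sigma = \eta_\tau$ for $(\sigma, \tau) \in T_{\prec_0}$), the constraints defining $V$ read $\eta_{[\sigma]_0}(v) < \eta_{[\tau]_0}(v)$ whenever $(\sigma, \tau) \in T_\prec \setminus T_{\prec_0}$ with $\sigma \prec_0 \tau$. These encode the restriction to each $\prec$-super-class of $\prec_0$'s quotient on $\prec_0$-tie classes---a partial order, hence admitting a linear extension. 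The non-containment from the previous step, together with the dimension count from condition (1), provides enough linear independence among the $\eta_C$'s on $A$ to realize this linear extension by some $v \in A$, so $V \neq \emptyset$.

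I expect the main obstacle to be this feasibility step: carefully matching the combinatorial rank of the constraints (equal to $\sum_j (m_j - 1)$, where $m_j$ counts the number of $\prec_0$-tie classes inside the $j$th $\prec$-super-class) to the geometric rank of the class functionals $\eta_C$ on $A$. This bookkeeping is what ultimately ensures that any linear extension of $\prec_0$ on merged tie classes can be realized by a direction in $A$, giving the sequence $\bp_n = \phi^{-1}(v/n) \to \bp$ in $Z^{\prec_0}_{\bm{a}}$.
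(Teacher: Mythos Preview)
Your approach matches the paper's: both directions are handled the same way. For $(\subseteq)$ you use a sequence and pass strict inequalities to weak ones; the paper argues the contrapositive by finding an open neighborhood disjoint from $Z^{\prec_0}_{\bm{a}}$ whenever $\prec_{\bp}\notin\{\prec_0\}\cup\mathcal{O}$. These are equivalent. For $(\supseteq)$ both you and the paper invoke the local hyperplane model from the proof of \Cref{lem:uniqueY}, set $V$ to be the intersection of the hyperplanes for $T_{\prec_0}$ with the open half-spaces for $T_\prec\setminus T_{\prec_0}$, and conclude once $V\neq\emptyset$.

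The main difference is that the paper simply \emph{asserts} $V\neq\emptyset$, whereas you attempt to justify it via the tie-class functionals $\eta_C$ and the dimension count from hypothesis (1). Your first step (that no $H(\sigma,\tau)$ with $(\sigma,\tau)\in T_\prec\setminus T_{\prec_0}$ contains $A$) is correct and uses (1) and (4) exactly as you describe. Your second step --- realizing a linear extension of the induced order on $\prec_0$-classes inside each $\prec$-super-class by some $v\in A$ --- is the right idea, and the rank bookkeeping you flag is indeed what is needed; once the differences $\eta_{C}-\eta_{C'}$ are shown to be linearly independent on $A$ (which is precisely what the dimension drop in (1) encodes), feasibility is immediate. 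So your write-up is, if anything, more careful than the paper's at this point. One minor note: the paper treats the case $\dim Z^{\prec_0}_{\bm{a}}=0$ separately, showing directly that every $Z^{\prec}_{\bm{a}}$ with $\prec\in\mathcal{O}$ is then empty. Your argument handles this case vacuously (there is no $\bp$ to start from), which is fine, but you might mention it explicitly.
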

\begin{proof}
By Lemma \ref{lem:stratum}, every $Y \in \mathcal{Y}_{\bm{a}}$ is a manifold. By Lemmas \ref{lem:Z_decomp} and \ref{lem:uniqueY}, the sets $Z^{\prec_0}_{\bm{a}}$ and $Z^{\prec}_{\bm{a}}$ (for all $\prec$ in $\mathcal{O}$) are submanifolds of $\base$. 

\vspace{3mm}

\noindent {\bf Case 1:} If $\dim(Z^{\prec_0}_{\bm{a}}) = 0$, then we must have
\begin{equation*}
    Z^{\prec_0}_{\bm{a}} = I_{\bm{a}}(\sigma_{i_1}, \sigma_{j_1}) \cap \cdots \cap I_{\bm{a}}(\sigma_{i_n}, \sigma_{j_n})\,.
\end{equation*}
for some $I_{\bm{a}}(\sigma_{i_1}, \sigma_{j_1}) \cap \cdots \cap I_{\bm{a}}(\sigma_{i_n}, \sigma_{j_n}) \in \overline{E^n_{\bm{a}}}$. If $\prec$ is in $\mathcal{O}$, then there is another pair $(\sigma_{i_{n+1}}, \sigma_{j_{n+1}})$ of distinct simplices such that $\sigma_{i_{n+1}} \not \prec \sigma_{j_{n+1}}$ and $\sigma_{j_{n+1}} \not \prec \sigma_{i_{n+1}}$. Therefore,
\begin{equation*}
    Z^{\prec}_{\bm{a}} \subseteq I_{\bm{a}}(\sigma_{i_1}, \sigma_{j_1}) \cap \cdots \cap I_{\bm{a}}(\sigma_{i_n}, \sigma_{j_n}) \cap I_{\bm{a}}(\sigma_{i_{n+1}}, \sigma_{j_{n+1}})\,,
\end{equation*}
which is an element of $\overline{E^{n+1}_{\bm{a}}}$. By choice of $\bm{a}$, every $S \in \overline{E^{n+1}_{\bm{a}}}$ is empty, so $Z^{\prec}_{\bm{a}} = \emptyset$.

\vspace{3mm}

\noindent {\bf Case 2:} If $\dim(Z^{\prec_0}_{\bm{a}}) \geq 1$, let $\prec$ be any strict partial order in $\mathcal{O}$. Let $\bp \in Z^{\prec}_{\bm{a}}$. 
%%%%%%%%%
Let $S(\bp) = \{(\sigma_i, \sigma_j) \mid \bp \in I_{\bm{a}}(\sigma_i, \sigma_j)\}$. By the same argument as in the proof of Lemma \ref{lem:uniqueY}, there is a neighborhood $U$ of $\bp$, a set $\{H(\sigma_i, \sigma_j)\}_{(\sigma_i, \sigma_j) \in S(\bp)}$ of hyperplanes in $\mathbb{R}^n$, and a homeomorphism $\phi: U \to \mathcal{B}$, where $\mathcal{B}$ is the open unit $n$-ball, such that
\begin{equation*}
    \phi\Big(\bigcap_{(\sigma_i, \sigma_j) \in S'(\bp)} I_{\bm{a}}(\sigma_i, \sigma_j)\cap U \Big) = \bigcap_{(\sigma_i, \sigma_j) \in S'(\bp)} H(\sigma_i, \sigma_j) \cap \mathcal{B}
\end{equation*}
for all $S'(\bp) \subseteq S(\bp)$. See Figure \ref{fig:U}.

Because $M_{\bm{a}, i} \pitchfork M_{\bm{a}, j}$ for all $i \neq j$, we have that $\mathcal{B} \setminus H(\sigma_i, \sigma_j)$ is the disjoint union of open sets $W_1(\sigma_i, \sigma_j)$ and $W_2(\sigma_i, \sigma_j)$ such that 
\begin{align*}
    f_{\bm{a}}(\sigma_i, \bp') &< f_{\bm{a}}(\sigma_j, \bp') \qquad \text{for all } \bp' \in \phi^{-1}(W_1(\sigma_i, \sigma_j))\,,\\
    f_{\bm{a}}(\sigma_j, \bp') &< f_{\bm{a}}(\sigma_i, \bp') \qquad \text{for all } \bp' \in \phi^{-1}(W_2(\sigma_i, \sigma_j))
\end{align*}
for all $(\sigma_i, \sigma_j) \in S(\bp)$. For each $(\sigma_i, \sigma_j) \in S(\bp)$, define the set $V(\sigma_i, \sigma_j)$ as in \eqref{eq:Vsigmatau_def}, and define the set $V$ as in \eqref{eq:V_def}. The set $\phi^{-1}(V)$ is a nonempty subset of $U \cap Z^{\prec_0}_{\bm{a}}$. This implies that $\bp$ is a limit point of $Z^{\prec_0}_{\bm{a}}$, so $Z_{\bm{a}}^{\prec_0} \subseteq \overline{Z^{\prec_0}_{\bm{a}}}$. Because $\prec$ is not the same as $\prec_0$, we have that $Z^{\prec}_{\bm{a}} \cap Z^{\prec_0}_{\bm{a}} = \emptyset$. Therefore, $Z_{\bm{a}}^{\prec_0} \subseteq \partial Z^{\prec_0}_{\bm{a}}$ and
\begin{equation*}
    \bigcup_{\prec \text{ in } \mathcal{O}}Z^{\prec}_{\bm{a}} \subseteq \partial Z^{\prec_0}_{\bm{a}}\,.
\end{equation*}
Now suppose that $\bp$ is in the complement of $Z^{\prec_0}_{\bm{a}} \cup \Big(\bigcup_{\prec \text{ in } \mathcal{O}}Z^{\prec}_{\bm{a}}\Big)$. Because $\prec_{f_{\bm{a}}(\cdot, \bp)}$ is not the same as $\prec_0$ or any $\prec$ in $\mathcal{O}$, there is a pair $(\sigma_i, \sigma_j)$ of simplices such that $f(\sigma_i, \bp) < f(\sigma_j, \bp)$ and either we have $\sigma_j \prec_0 \sigma_i$ or we have $\sigma_j \not\prec \sigma_i$ and $\sigma_i \not\prec \sigma_j$. By continuity of $f$, there is a neighborhood $U_{\sigma_i, \sigma_j}$ of $\bp$ such that $f_{\bm{a}}(\sigma_i, \bp') < f_{\bm{a}}(\sigma_j, \bp')$ for all $\bp' \in U_{\sigma_i, \sigma_j}$. Therefore, $U_{\sigma_i, \sigma_j}$ is in the complement of $Z^{\prec_0}_{\bm{a}}$, so $p$ is not in $\overline{Z^{\prec_0}_{\bm{a}}}$. This implies 
\begin{equation*}
    \partial Z^{\prec_0}_{\bm{a}} \subseteq  \bigcup_{\prec \text{ in } \mathcal{O}}Z^{\prec}_{\bm{a}}\,,
\end{equation*}
which completes the proof.
\end{proof}

%%%%%%%%%%%%%%%%%

\end{document}